\newtheorem{theorem}{Theorem}[section]
\newtheorem{lemma}{Lemma}[section]
\newtheorem{problem}{Problem}[section]
\newtheorem{corollary}{Corollary}[section]
\newtheorem{remark}{Remark}[section]
\newtheorem{claim}{Claim}
\newcommand{\SD}{\mathrm{SD}_n}
\newcommand{\SDG}{\mathrm{SD}(n,R,T)}
\newcommand{\PSD}{\mathrm{PSD}_n}
\newcommand{\PSDG}{\mathrm{PSD}(n,R,T)}
\newcommand{\Cay}{\mathrm{Cay}}
\renewcommand\proofname{\bf{Proof}}
\title{\bf \Large Distance-regular Cayley graphs over (pseudo-) semi-dihedral groups}
\author{Xueyi Huang$^a$, \ \ Lu Lu$^{b,}$\thanks{Corresponding author.}\setcounter{footnote}{-1}\footnote{\emph{E-mail address:}  huangxymath@163.com (X. Huang), lulumath@csu.edu.cn (L. Lu), zhanxfmath@163.com (X. Zhan).},    \ \ Xiongfeng Zhan$^a$\\[2mm]
\small $^a$School of Mathematics, East China University of Science and Technology,\\
\small Shanghai, 200237, P. R. China\\
\small $^b$School of Mathematics and Statistics, Central South University,\\ 
\small Changsha, Hunan, 410083, P. R. China
}
\date{ }
\begin{document}
\maketitle\begin{abstract}
Distance-regular graphs are a class of regualr graphs with pretty combinatorial symmetry. In 2007, Miklavi\v{c} and Poto\v{c}nik proposed the problem of charaterizing distance-regular Cayley graphs, which can be viewed as a natural extension of  the problem of characterizing  strongly-regular Cayley graphs (or equivalently, regular partial  difference sets). In this paper, we provide a partial characterization for distance-regular Cayley graphs over semi-dihedral groups and pseudo-semi-dihedral groups, both of which are $2$-groups with a cyclic subgroup of index $2$. 
\par\vspace{2mm}
\noindent{\bfseries Keywords:} Distance-regular graph; Cayley graph; semi-dihedral group; pseudo-semi-dihedral group
\par\vspace{1mm}

\noindent{\bfseries 2010 MSC:} 05E30, 05C25, 05C50
\end{abstract}

\baselineskip=0.202in

\section{Introduction}\label{sec::1}
In graph theory, distance-regular graphs are a class of regular graphs with pretty combinatorial symmetry. Roughly speaking, a connected graph $X$ is distance-regular, if for every vertex $x$ of $X$, the distance partition of $X$ with respect to $x$ is an equitable partition, and all these equitable partitions share the same quotient matrix. Although this condition
is purely combinatorial, the notion of distance-regular graphs plays a key role in the study of design theory and coding theory, and is closely linked to some other subjects such as finite group theory, finite geometry, representation theory, and association schemes\cite{DKT16}.

In the theory of distance-regular graphs, how to characterize or construct distance-regular graphs of specific types or  parameters is an essential problem. Cayley graphs, a class of vertex-transitive graphs defined by groups and their subsets, might be  good candidates for distance-regular graphs. This is because most of the known distance-regular graphs are vertex-transitive \cite{DK06}, and many infinite families of distance-regular graphs of diameter $2$, namely strongly regular graphs, are constructed from Cayley graphs \cite{FMX15,FX12,GXY13,LM95,LM05,M84,M94,M89,Mom13,Mom14,MX14,MX18,MX22}.

Given a finite group $G$ with identity $1$ and an inverse closed subset $S$ of $G$ with $1\not\in S$, the \textit{Cayley graph} $\mathrm{Cay}(G,S)$ is the graph with vertex set $G$, and with an edge joining two vertices $g,h\in G$ if and only if $g^{-1}h\in S$. It is known that $\mathrm{Cay}(G,S)$ is connected if and only if  $\langle S \rangle=G$, and that $G$ acts regularly on the vertex set of $\mathrm{Cay}(G,S)$ by left multiplicity. In 2007, Miklavi\v{c} and Poto\v{c}nik  \cite{MP07} (see also \cite[Problem 71]{DKT16})  proposed the problem of characterizing distance-regular Cayley graphs.

	\begin{problem}\label{prob::main}
		For a class of groups $\mathcal{G}$, determine all distance-regular graphs, which are Cayley graphs on a group in $\mathcal{G}$.
	\end{problem}

As an early effort on studying Problem \ref{prob::main},  Miklavi\v{c} and Poto\v{c}nik \cite{MP03} determined all distance-regular Cayley graphs over cyclic groups (called \textit{circulants}) by using the method of Schur ring.

\begin{theorem}[{\cite[Theorem 1.2, Corollary 3.7]{MP03}}] \label{thm::cir}
		Let $X$ be a circulant on $n$ vertices. Then $X$ is distance-regular if and only if it is isomorphic to one of the following graphs:
		\begin{enumerate}[(i)]\setlength{\itemsep}{0pt}
			\item the cycle $C_n$;
			\item the complete graph $K_n$;
			\item the complete multipartite graph $K_{t\times m}$, where $tm=n$;
			\item the complete bipartite graph without a perfect matching $K_{m,m}-mK_2$, where $2m = n$ and $m$ is odd;
			\item the Paley graph $P(n)$, where $n\equiv 1\pmod 4$  is prime.
		\end{enumerate}
		In particular, $X$ is a primitive distance-regular graph if and only if $X\cong K_n$, or $n$ is prime, and $X\cong C_n$ or $P(n)$.
	\end{theorem}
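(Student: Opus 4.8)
The plan is to convert distance-regularity of a circulant into a statement about Schur rings (S-rings) over $\mathbb{Z}_n$ and then apply the known classification of S-rings over cyclic groups. Suppose $X=\Cay(\mathbb{Z}_n,S)$ is distance-regular of diameter $d$, and for $0\le i\le d$ put $S_i=\{g\in\mathbb{Z}_n:\operatorname{dist}_X(0,g)=i\}$, so that $S_0=\{0\}$, $S_1=S$, and the distance-$i$ graph of $X$ is $\Cay(\mathbb{Z}_n,S_i)$. Since $X$ is distance-regular the adjacency matrices $A_0,\dots,A_d$ of these graphs span the Bose--Mesner algebra of $X$, which is commutative and closed under the Schur (entrywise) product; as every $A_i$ is circulant, this algebra lies inside the algebra of circulant matrices, and translating back to $\mathbb{Z}_n$ shows that $\{S_0,\dots,S_d\}$ is precisely the set of basic sets of an S-ring $\mathcal{A}$ over $\mathbb{Z}_n$, with each $S_i$ inverse-closed. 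Moreover, distance-regularity means each $A_i$ is a polynomial in $A_1$, so $\mathcal{A}$ is generated as an S-ring by the single basic set $S_1$ and the associated scheme is $P$-polynomial. The task is thus to determine the S-rings over $\mathbb{Z}_n$ that are generated by one basic set and carry a $P$-polynomial structure, together with the generating set.

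I would then invoke the structure theory of S-rings over cyclic groups (Klin--P\"oschel; Leung--Man; see also Evdokimov--Ponomarenko): every S-ring over $\mathbb{Z}_n$ is obtained, via tensor products along coprime factorizations of $n$ and via (generalized) wreath products along chains of subgroups, from building blocks that are either the rank-$2$ S-ring or a normal (cyclotomic) S-ring over a cyclic $p$-group $\mathbb{Z}_{p^l}$, induced by a subgroup of $\operatorname{Aut}(\mathbb{Z}_{p^l})$. Since $\mathcal{A}$ is generated by a single basic set, only very restricted combinations of blocks can occur, and I would organise the remainder of the proof as a case analysis along this structure, in each case either identifying the distance-regular graph produced or excluding the configuration.

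In the decomposable cases, $\mathcal{A}$ is a nontrivial tensor product coming from a coprime factorization $\mathbb{Z}_n\cong\mathbb{Z}_a\times\mathbb{Z}_b$, or a (generalized) wreath product relative to a proper nontrivial subgroup $H\le\mathbb{Z}_n$; correspondingly $X$ is a categorical (tensor) product of two smaller circulants, or a blow-up in which every coset of $H$ is a clique or an independent set and any two cosets are joined completely or not at all. Examining when such a graph can be distance-regular forces $X$ to be a complete multipartite graph $K_{t\times m}$ with $tm=n$, or a complete bipartite graph minus a perfect matching $K_{m,m}-mK_2\cong K_2\times K_m$ with $2m=n$ (here $m$ is odd, since $\gcd(2,m)=1$ is exactly what makes $\mathbb{Z}_2\times\mathbb{Z}_m$ cyclic); conversely both families are easily seen to be distance-regular. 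In the indecomposable cases $\mathcal{A}$ is itself a building block: if it has rank $2$ then $X\cong K_n$; otherwise $\mathcal{A}$ is a cyclotomic S-ring over some $\mathbb{Z}_{p^l}$ attached to a subgroup $M\le\operatorname{Aut}(\mathbb{Z}_{p^l})$ with $-1\in M$, and $S_1$ is a single $M$-orbit. One then checks that the resulting scheme is $P$-polynomial only when $M=\{\pm1\}$, giving $X\cong C_{p^l}$, or when $l=1$ and $M$ has index $2$ in $\mathbb{Z}_p^*$, in which case $\Cay(\mathbb{Z}_p,M)$ is strongly regular with conference-graph parameters, forcing $p\equiv 1\pmod 4$ and $X\cong P(p)$; every other $M$ yields a Cayley graph that is non-$P$-polynomial, disconnected, or directed.

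Collecting the cases produces exactly the list (i)--(v). The primitivity statement follows by identifying the imprimitive members: a complete multipartite graph $K_{t\times m}$ with $t,m\ge2$ has disconnected complement $tK_m$, hence is imprimitive; $K_{m,m}-mK_2$ is bipartite and antipodal, hence imprimitive; and $C_n$ with $n$ composite is imprimitive because for a suitable proper divisor $i$ of $n$ the distance-$i$ graph $\Cay(\mathbb{Z}_n,\{\pm i\})$ is disconnected; whereas $K_n$, $P(p)$, and $C_p$ with $p$ prime are primitive. I expect the indecomposable case to be the main obstacle, in particular proving that the only primitive distance-regular circulants of diameter at least $3$ are the prime cycles, and that no cyclotomic S-ring over $\mathbb{Z}_p$ of intermediate rank admits a $P$-polynomial ordering. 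This ultimately relies on the classification of strongly regular circulants --- equivalently, of nontrivial regular partial difference sets in cyclic groups, which by multiplier and character-sum arguments are exactly the Paley difference sets of prime order --- together with a direct analysis of when a cyclotomic scheme is $P$-polynomial.
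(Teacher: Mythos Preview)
The paper does not contain its own proof of this statement: Theorem~\ref{thm::cir} is quoted verbatim from Miklavi\v{c}--Poto\v{c}nik \cite{MP03} as background, and is used throughout the paper only as a black box (for instance in Corollary~\ref{cor::3.1} and in Cases~A--C of the proof of Theorem~\ref{thm::main1}). There is therefore nothing in the present paper to compare your argument against.

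That said, your overall strategy is the right one and is essentially the strategy of \cite{MP03}: translate distance-regularity of $\Cay(\mathbb{Z}_n,S)$ into the statement that the distance module is a Schur ring over $\mathbb{Z}_n$ (this is exactly Lemma~\ref{lem::2.7} here, taken from \cite[Proposition~3.6]{MP03}), and then exploit the structure theory of Schur rings over cyclic groups. Where your sketch is thinnest is in the ``indecomposable'' step: you assert that a cyclotomic S-ring over $\mathbb{Z}_{p^l}$ is $P$-polynomial only for $M=\{\pm 1\}$ or for $l=1$ with $[\mathbb{Z}_p^\ast:M]=2$, but you do not indicate how to prove this, and this is precisely the heart of the classification. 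In \cite{MP03} the analogous reduction is handled not by a direct $P$-polynomial test on cyclotomic schemes but by combining the imprimitive/primitive dichotomy with the classification of primitive S-rings over cyclic groups and with the Bridges--Mena/Ma result that the only nontrivial strongly regular circulants are Paley graphs of prime order. If you intend a self-contained proof, that is the place where real work is needed; as written, your proposal states the desired conclusion rather than deriving it.
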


Furthermore, by using  Schur ring and Fourier transformation, Miklavi\v{c} and Poto\v{c}nik \cite{MP07} obtained a  classification of distance-regular Cayley graphs over dihedral groups in terms of difference sets. A \textit{$(n,k,\lambda)$-difference set} in a group $G$ of order $n$ is a $k$-subset $D$ of $G$ such that every $g\in G\setminus\{1\}$ has exactly $\lambda$ representations $g=d_1d_2^{-1}$ with $d_1,d_2\in D$. If $k\notin\{n, n-1, 1, 0\}$, then $D$ is \textit{non-trivial}.  A \textit{dihedral group} of order $2n$ is defined as 
$$
D_n=\{\rho,\tau\mid \rho^n=\tau^2=1,\tau\rho\tau=\rho^{-1}\}.
$$ 
Cayley graphs over dihedral groups are called \textit{dihedrants}. A dihedrant of order $2n$ is  denoted by  $\mathrm{Dih}(n,R,T):=\Cay(D_n,\rho^R\cup \rho^T\tau)$, where $R=-R\subseteq \mathbb{Z}_n\setminus\{0\}$, $T\subseteq \mathbb{Z}_n$, and $\rho^I=\{\rho^i\mid i\in I\}$ for $I\in\{R,T\}$. 	
	
\begin{theorem}[{\cite[Theorem 1.3]{MP07}}]\label{thm::dih}
 Let $X$ be a dihedrant on $2n$ vertices other than the cycle $C_{2n}$, the complete graph $K_{2n}$, the complete multipartite graph $K_{t\times m}$, where $tm = 2n$, or the complete bipartite graph without a perfect matching $K_{n,n} - nK_2$. Then $X$ is distance-regular if and only if one of the following holds:
 \begin{enumerate}[(i)]
     \item $X\cong \mathrm{Dih}(n,\emptyset,T)$, where $T$ is a non-trivial difference set in the group $\mathbb{Z}_n$.

\item $n$ is even and $X \cong \mathrm{Dih}(n, R, T)$, where $R$ and $T$ are non-empty subsets of $2\mathbb{Z}_n+1$ such that $\rho^{-1+R}\cup \rho^{-1+T}\tau$ is a non-trivial difference set in the dihedral group $\langle \rho^2,\tau\rangle$ of order $n$.
 \end{enumerate}
If either (i) or (ii) holds, then $X$ is bipartite, non-antipodal, and has diameter 3.
\end{theorem}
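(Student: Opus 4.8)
The plan is to combine the representation-theoretic spectrum of a Cayley graph over $D_n$ (computed via the discrete Fourier transform) with the classical description of bipartite distance-regular graphs of diameter $3$ as incidence graphs of symmetric designs. First one clears away the degenerate cases: $\mathrm{Dih}(n,R,T)$ is connected exactly when $T\ne\emptyset$ and $R\cup(T-T)$ generates $\mathbb{Z}_n$, so assume this; and recall that a distance-regular graph of diameter $d$ has precisely $d+1$ distinct eigenvalues, which is the global constraint used throughout. Evaluating the irreducible representations of $D_n$ on the connection set---the linear characters, and for $1\le h<n/2$ the two-dimensional $\phi_h\colon\rho\mapsto\mathrm{diag}(\omega^h,\omega^{-h})$, $\tau\mapsto\left(\begin{smallmatrix}0&1\\1&0\end{smallmatrix}\right)$ with $\omega=e^{2\pi\mathrm{i}/n}$---one finds that the eigenvalues of $\mathrm{Dih}(n,R,T)$ are the numbers $\alpha_h\pm|\beta_h|$ for $1\le h<n/2$, each of multiplicity two, where $\alpha_h=\sum_{r\in R}\omega^{hr}$ (real, since $R=-R$) and $\beta_h=\sum_{t\in T}\omega^{ht}$ are the Fourier transforms of $R$ and $T$, together with the simple eigenvalues $|R|\pm|T|$ and, when $n$ is even, $\sum_{r\in R}(-1)^r\pm\sum_{t\in T}(-1)^t$.

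Next one detects bipartiteness group-theoretically: $X=\mathrm{Dih}(n,R,T)$ is bipartite iff some index-two subgroup of $D_n$ avoids the connection set, and the index-two subgroups of $D_n$ are $\langle\rho\rangle$ and, when $n$ is even, $\langle\rho^2,\tau\rangle$ and $\langle\rho^2,\rho\tau\rangle$. This forces either $R=\emptyset$ (bipartition by the cosets of $\langle\rho\rangle$), or else $n$ even and, after applying the automorphism $\rho\mapsto\rho$, $\tau\mapsto\rho\tau$ of $D_n$ if necessary, $R,T\subseteq 2\mathbb{Z}_n+1$ (bipartition by the cosets of $\langle\rho^2,\tau\rangle$). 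In the first case $X$ is isomorphic to the incidence graph of the development of the subset $T$ in $\mathbb{Z}_n$; in the second, translating the non-identity coset by $\rho^{-1}$---which is exactly what moves the connection set into $\langle\rho^2,\tau\rangle$---exhibits $X$ as the incidence graph of the development of $\rho^{-1+R}\cup\rho^{-1+T}\tau$ in the dihedral group $\langle\rho^2,\tau\rangle$ of order $n$. In either case a regular (Singer-type) automorphism group acts on the incidence structure, so, invoking the standard fact that a connected bipartite graph of diameter $3$ is distance-regular iff it is the incidence graph of a symmetric $2$-design, and that such a design admitting a regular automorphism group is precisely the development of a difference set in that group, one gets that $X$ is distance-regular of diameter $3$ iff the relevant subset is a difference set. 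Tracking the degenerate parameters, the trivial difference sets correspond exactly to $K_{n,n}$, $K_{n,n}-nK_2$ and the matching/empty graphs, so avoiding those graphs is equivalent to requiring a \emph{non-trivial} difference set; and the incidence graph of a non-trivial symmetric design is automatically bipartite, non-antipodal and of diameter $3$, which yields the last sentence. This gives (i) and (ii), the converse being the same argument run backwards.

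The heart of the matter is to show that nothing else occurs: every distance-regular dihedrant which is non-bipartite of diameter $\ge 2$ must be $K_{2n}$ or a complete multipartite $K_{t\times m}$, and every distance-regular dihedrant which is bipartite of diameter $\ge 4$ must be the cycle $C_{2n}$. The approach is to use that for a distance-regular $X=\mathrm{Dih}(n,R,T)$ the distance-$i$ connection sets ($0\le i\le d$) form a Schur ring over $D_n$ of rank $d+1$, and then to apply the structure theory of Schur rings over dihedral groups---which, when $R=\emptyset$, comes down to the structure theory of Schur rings over $\mathbb{Z}_n$ that already underlies Theorem~\ref{thm::cir}---to list the ranks and the wreath-type or tensor-type decompositions compatible with $X$ being connected and neither complete nor complete multipartite, checking the surviving candidates against the explicit eigenvalue list $\{\alpha_h\pm|\beta_h|\}$ and the integrality and multiplicity feasibility conditions for distance-regular graphs. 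I expect this simultaneous control of the Schur-ring type and of the spectrum to be the main obstacle: dihedral groups admit genuinely non-cyclic Schur rings, so the cyclic classification cannot simply be quoted, and the ``mixed'' configurations with $R\ne\emptyset$, $T\ne\emptyset$ and $X$ non-bipartite must be excluded by a careful case analysis.
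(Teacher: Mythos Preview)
This theorem is not proved in the present paper at all: it is quoted verbatim from \cite[Theorem~1.3]{MP07} and used only as a black box (alongside Theorem~\ref{thm::cir}, Lemma~\ref{lem::2.5} and Lemma~\ref{lem::2.6}) in the proofs of Theorems~\ref{thm::main1} and~\ref{thm::main2}. There is therefore no ``paper's own proof'' of this statement to compare your proposal against.

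For what it is worth, your sketch is broadly in the spirit of the original argument in \cite{MP07}: that paper also computes the spectrum of $\mathrm{Dih}(n,R,T)$ via the Fourier transform, recognises the bipartite diameter-$3$ case as the incidence graph of a symmetric design developed from a difference set (their Lemma~5.3, essentially Lemma~\ref{lem::2.10} here), and uses Schur-ring machinery to exclude the remaining cases. Where your proposal is weakest is exactly where you flag it: the elimination of non-bipartite distance-regular dihedrants and of bipartite ones of diameter $\ge 4$ is not a short feasibility check but the bulk of \cite{MP07}, occupying their Sections~4--6 and requiring, among other things, their Lemmas~4.4 and~4.5 (restated here as Lemma~\ref{lem::3.5}) together with a detailed analysis of antipodal covers and halved graphs rather than a single appeal to ``the structure theory of Schur rings over dihedral groups''. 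So your outline is correct in shape, but the paragraph beginning ``The heart of the matter'' is a statement of what remains to be done rather than a proof of it.
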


By using elementary group theory and structural analysis,  Miklavi\v{c} and \v{S}parl \cite{MS14,MS20}   determined all distance-regular Cayley graphs  over abelian groups and generalized dihedral groups under the condition that the connection set is minimal with respect to some element. Among other things, Abdollahi, van Dam and Jazaeri \cite{ADJ17} determined all distance-regular Cayley graphs of diameter at most three with least eigenvalue $-2$, and van Dam and Jazaeri \cite{DJ19,DJ21} determined some distance-regular Cayley graphs with small valency and provided some characterizations for  bipartite distance-regular Cayley graphs with diameter  $3$ or $4$. For more results on  distance-regular Cayley graphs, we refer the reader to  \cite{HD22,HDL23,ZLH23}.

According to the former literatures, it is not hard to see that   $p$-groups, especially elementary abelian $p$-groups, play a key role in the construction of strongly regular Cayley graphs \cite{FMX15,FX12,Mom14,MX22}. Thus it is natural to ask for a characterization of distance-regular Cayley graphs over  $p$-groups. In this paper, we focus on the characterization of distance-regular Cayley graphs over semi-dihedral groups and pseudo-semi-dihedral groups. For $r\ge 2$ and $n=2^r$, the \textit{semi-dihedral group} and \textit{pseudo-semi-dihedral group}  of order $4n$ are defined as
\[
\SD=\langle\rho,\tau\mid\rho^{2n}=\tau^2=1,\tau\rho\tau=\rho^{n-1} \rangle
\]
and 
\[
\PSD=\langle\rho,\tau\mid\rho^{2n}=\tau^2=1,\tau\rho\tau=\rho^{n+1} \rangle,
\]
respectively. Note that both  $\SD$ and $\PSD$ are $2$-groups with a cyclic subgroup of index $2$.  Cayley graphs over semi-dihedral groups and pseudo-semi-dihedral groups are called 
\textit{semi-dihedrants} and \textit{pseudo-semi-dihedrants}, respectively. Then it is easy to see that a semi-dihedrant (resp. pseudo-semi-dihedrant) of order $4n$ is of the form  $\SDG:=\mathrm{Cay}(\SD,\rho^R\cup \rho^T\tau)$ (resp. $\PSDG:=\mathrm{Cay}(\PSD,\rho^R\cup \rho^T\tau)$), where $R=-R\subseteq \mathbb{Z}_{2n}\setminus\{0\}$ and $T=(n+1)T\subseteq \mathbb{Z}_{2n}$ (resp. $T=(n-1)T\subseteq \mathbb{Z}_{2n}$). Our main results are as follows.

\begin{theorem}\label{thm::main1}
Let $X=\mathrm{SD}(n,R,T)$ be a semi-dihedrant with  $n=2^r>4$. Then $X$ is distance-regular if and only if it is isomorphic to one of the following graphs:
\begin{enumerate}[(i)]\setlength{\itemsep}{0pt}
\item the complete graph $K_{4n}$;
\item the complete multipartite graph $K_{s\times t}$  with $st=4n$;

\item the complete bipartite graph without a perfect matching $K_{2n,2n} - 2nK_2$;

\item the graph $\mathrm{SD}(n,\emptyset,T)$, where $T=(n+1)T$ is  a non-trivial difference set in the group $\mathbb{Z}_{2n}$;
\item the graph $\SDG$, where $R=-R$ and  $T=n+T$ are non-empty subsets of $2\mathbb{Z}_{2n}+1$ such that $\rho^{-1+R}\cup\rho^{-1+T}\tau$ is a non-trivial difference set in the group $\langle\rho^2,\tau\rangle$;
\item the graph $\SDG$,   where $R=-R\subseteq 2\mathbb{Z}_{2n}+1$ and $T\subseteq 2\mathbb{Z}_{2n}$ are non-empty subsets  such that $\rho^{-1+R}\cup\rho^{-1+T}\tau$ is a non-trivial difference set in the group $\langle\rho^2,\rho\tau\rangle$;
\item the graph $\SDG$, where $R=-R\subseteq 2\mathbb{Z}_{2n}+1$ and $T\subseteq 2\mathbb{Z}_{2n}$ are subsets of size $n/2$ such that  $R\cap (n+R)=T\cap (n+T)=\emptyset$ and $|R\cap (i+R)|+|T\cap (i+T)|=n/2$ for all $i\in 2\mathbb{Z}_{2n}\setminus\{0,n\}$.
\end{enumerate}
In particular,  the graphs in (iv)--(vi) are  non-antipodal bipartite non-trivial distance-regular graph with  diameter $3$, and the graph in  (vii) is a $2$-fold antipodal bipartite non-trivial distance-regular graph with diameter $4$ (i.e., Hadamard graph).
\end{theorem}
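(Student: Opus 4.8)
The plan is to adapt the Schur-ring and character-sum method that Miklavi\v{c} and Poto\v{c}nik used for circulants and dihedrants (Theorems~\ref{thm::cir} and~\ref{thm::dih}) to the group $\SD$, which is $2$-generated with $\langle\rho\rangle$ a cyclic subgroup of index $2$ but is neither abelian nor dihedral. The ``if'' direction is the routine half: families (i)--(iii) are classical; for (iv)--(vi) one checks directly that the stated difference-set condition makes $X$ bipartite with its halved graphs (or its bipartite quotient) realising a known distance-regular graph of diameter $3$; and for (vii) one recognises the conditions on $(R,T)$ as exactly those describing a Hadamard graph on $4n$ vertices of valency $|R|+|T|=n$ (an order-$n$ Hadamard matrix exists because $n$ is a power of $2$). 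So the substance lies in the ``only if'' direction.

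For that I would first record the spectrum of $X=\SDG$. Since $[\SD,\SD]=\langle\rho^2\rangle$, the group has exactly four linear characters and $n-1$ irreducible characters of degree $2$, the latter given by $\pi_\ell(\rho)=\mathrm{diag}(\omega^{\ell},\omega^{(n-1)\ell})$ and $\pi_\ell(\tau)=\left(\begin{smallmatrix}0&1\\1&0\end{smallmatrix}\right)$ with $\omega=e^{\pi i/n}$, for $\ell\in\mathbb{Z}_{2n}\setminus\{0,n\}$ taken up to $\ell\mapsto(n-1)\ell$. The linear characters contribute the eigenvalues $\sum_{i\in R}(\pm 1)^{i}+\sum_{j\in T}(\pm 1)^{j}(\pm 1)$, while each $\pi_\ell$ contributes (each with multiplicity $2$) the two eigenvalues of $\left(\begin{smallmatrix}A_\ell & B_\ell\\ C_\ell & A_{(n-1)\ell}\end{smallmatrix}\right)$, where $A_\ell=\sum_{i\in R}\omega^{\ell i}$, $B_\ell=\sum_{j\in T}\omega^{\ell j}$, $C_\ell=\sum_{j\in T}\omega^{(n-1)\ell j}$. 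The key lever is that a distance-regular graph of diameter $d$ has exactly $d+1$ distinct eigenvalues, which severely constrains the character sums $A_\ell$ and $B_\ell$.

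Next I would dispose of the imprimitive cases: if $X$ is distance-regular of diameter $\ge 2$ and is not one of (i)--(iii), then $X$ is bipartite and/or antipodal, and since the halves, the antipodal quotient, and the bipartition classes are blocks for the regular $\SD$-action they are unions of cosets of a normal subgroup; tracing this shows any bipartition is induced by one of the three index-$2$ subgroups $\langle\rho\rangle\cong\mathbb{Z}_{2n}$, $\langle\rho^2,\tau\rangle\cong D_n$, or $\langle\rho^2,\rho\tau\rangle$ (generalised quaternion of order $2n$). If it comes from $\langle\rho\rangle$ (equivalently $R=\emptyset$), then $X$ is the bipartite incidence-type graph of $T\subseteq\mathbb{Z}_{2n}$ and the classification of bipartite distance-regular graphs of diameter $3$ forces $T$ to be a non-trivial difference set---family (iv). If $R\ne\emptyset$ but $X$ is still bipartite, bipartiteness pushes $R$ (and, according to the parity of $T$, also $T$) into the odd residues, and a translation by $\rho^{-1}$ turns $X$ into a Cayley graph over $D_n$ or over $\langle\rho^2,\rho\tau\rangle$, after which the (generalised-)dihedral machinery of~\cite{MS14,MS20} lands us in families (v) and (vi). The remaining possibility for a non-trivial imprimitive distance-regular $X$ is a diameter-$4$ antipodal $2$-cover, whose intersection array forces the Hadamard-type equations of family (vii) and the sizes $|R|=|T|=n/2$.

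The main obstacle is exactly this ``only if'' case analysis: proving that the eigenvalue and imprimitivity constraints leave no room beyond (i)--(vii). The delicate part is to control the $n-1$ blocks $\pi_\ell(\sum_{s\in S}s)$ \emph{simultaneously}---ruling out accidental coincidences among their eigenvalues not explained by one of the structural families---and to handle the coupling between the cyclic part $R$, whose induced subgraph on $\langle\rho\rangle$ is the circulant $\Cay(\mathbb{Z}_{2n},R)$ so that Theorem~\ref{thm::cir} applies, and the reflection part $T$, which here behaves differently from the dihedral case because $\tau$ conjugates $\rho$ to $\rho^{n-1}$ rather than $\rho^{-1}$; this is precisely what brings in the quaternion subgroup $\langle\rho^2,\rho\tau\rangle$ and hence family (vi), with no analogue in Theorem~\ref{thm::dih}. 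A further layer of bookkeeping is the inverse-closure constraint $T=(n+1)T$ and the $\rho^{-1}$-shift normalisations relating $\SDG$ to Cayley graphs over $D_n$ and over $\langle\rho^2,\rho\tau\rangle$.
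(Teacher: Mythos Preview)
Your outline has the right shape in places---identifying the three index-$2$ subgroups $\langle\rho\rangle$, $\langle\rho^2,\tau\rangle$, $\langle\rho^2,\rho\tau\rangle$ and matching them to families (iv)--(vi), and recognising (vii) as a Hadamard graph---but there are genuine gaps that your character-sum framework does not fill.

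\textbf{The primitivity step is missing.} You write ``if $X$ is distance-regular of diameter $\ge 2$ and is not one of (i)--(iii), then $X$ is bipartite and/or antipodal'', but this is precisely what needs proving: why can there be no \emph{primitive} non-complete distance-regular semi-dihedrant? The observation that a diameter-$d$ graph has $d+1$ distinct eigenvalues does not by itself force imprimitivity; one can easily imagine character sums coinciding in exactly $d+1$ values without any block structure. The paper closes this gap by invoking the Schur-ring machinery: the distance module of a distance-regular Cayley graph is a Schur ring (Lemma~\ref{lem::2.7}), and a classical $B$-group result of Kanazawa--Enomoto (Lemma~\ref{lem::2.8}) says $\SD$ admits no non-trivial primitive Schur ring. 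That is the decisive external input, and your proposal has no substitute for it.

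\textbf{The antipodal non-bipartite case is skipped, and the diameter bounds are unjustified.} You pass directly from ``imprimitive'' to the bipartite subcases and the diameter-$4$ antipodal cover, but you never rule out an antipodal non-bipartite $X$ of diameter $3$, nor do you explain why a bipartite non-antipodal $X$ cannot have diameter $\ge 4$, or why an antipodal bipartite $X$ cannot have diameter $\ge 5$. The paper handles all of this structurally: Corollary~\ref{cor::3.1} shows that the antipodal quotient and the halved graphs are Cayley graphs over cyclic, dihedral, or dicyclic groups, and then the \emph{existing} classifications (Theorems~\ref{thm::cir}, \ref{thm::dih}, Lemma~\ref{lem::2.5}) force those quotients to be complete, pinning down the diameter. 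The antipodal non-bipartite diameter-$3$ case still requires a separate Fourier-analytic argument (Lemma~\ref{lem::3.7}). Your eigenvalue blocks $\pi_\ell(\underline{S})$ could in principle be used here, but you would end up re-proving these lemmas rather than bypassing them.

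Finally, a minor correction: translating by $\rho^{-1}$ does not turn $X$ into a Cayley graph over $D_n$ or the dicyclic group; rather, the paper uses Lemma~\ref{lem::2.10} to recognise $\rho^{-1}S$ as a difference set inside the index-$2$ subgroup, which is the correct mechanism for (v) and (vi).
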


\begin{theorem}\label{thm::main2}
Let $X=\PSDG$ be a pseudo-semi-dihedrant  with $n=2^r>4$. Then $X$ is distance-regular if and only if it is isomorphic to one of the following graphs:
\begin{enumerate}[(i)]\setlength{\itemsep}{0pt}
\item the complete graph $K_{4n}$;
\item the complete multipartite graph $K_{s\times t}$  with $st=4n$;

\item the complete bipartite graph without a perfect matching $K_{2n,2n} - 2nK_2$;
\item the graph $\mathrm{PSD}(n,\emptyset,T)$, where $T=(n-1)T$ is  a non-trivial difference set in the group $\mathbb{Z}_{2n}$;
\item the graph $\PSDG$, where $R=-R$ and $T=n-T$ are non-empty subsets of $2\mathbb{Z}_{2n}+1$ such that $(-1+R,0)\cup(-1+T,1)$ is a non-trivial difference set in the group $2\mathbb{Z}_{2n}\oplus\mathbb{Z}_2$; 
\item the graph $\PSDG$, where $R=-R\subseteq 2\mathbb{Z}_{2n}+1$ and $T=-T\subseteq 2\mathbb{Z}_{2n}$ are non-empty subsets  such that $\rho^{-1+R}\cup\rho^{-1+T}\tau$ is a non-trivial difference set in the group $\langle\rho^2,\rho\tau\rangle=\langle \rho\tau\rangle$;
\item the graph $\PSDG$, where 
$R=-R$ and $T=n-T$ are subsets of $2\mathbb{Z}_{2n}+1$ of size $n/2$ such that  $R\cap (n+R)=T\cap (n+T)=\emptyset$ and $|R\cap (i+R)|+|T\cap (i+T)|=n/2$ for all $i\in 2\mathbb{Z}_{2n}\setminus\{0,n\}$.
\end{enumerate}
In particular, the graphs in (iv)--(vi) are  non-antipodal bipartite non-trivial distance-regular graphs with  diameter $3$, and the graph in  (vii) is a $2$-fold antipodal bipartite non-trivial distance-regular graph with diameter $4$ (i.e., Hadamard graph).
\end{theorem}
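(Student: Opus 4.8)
The two theorems are entirely parallel, so I would treat $G\in\{\SD,\PSD\}$ uniformly. The starting point is the group-theoretic bookkeeping: $z:=\rho^{n}$ is a central involution with $G/\langle z\rangle\cong D_{n}$ if $G=\SD$ and $G/\langle z\rangle\cong\mathbb{Z}_{n}\times\mathbb{Z}_{2}$ if $G=\PSD$, and $G$ has exactly three subgroups of index $2$: for $\SD$ these are $\langle\rho\rangle\cong\mathbb{Z}_{2n}$, $\langle\rho^{2},\tau\rangle\cong D_{n}$ and $\langle\rho^{2},\rho\tau\rangle\cong Q_{2n}$, while for $\PSD$ they are $\langle\rho\rangle\cong\mathbb{Z}_{2n}$, $\langle\rho^{2},\tau\rangle\cong\mathbb{Z}_{n}\times\mathbb{Z}_{2}$ and $\langle\rho\tau\rangle\cong\mathbb{Z}_{2n}$. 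I would then write down all irreducible characters of $G$: the $|G:G'|$ linear characters ($4$ for $\SD$, $2n$ for $\PSD$), and the degree-two characters $\pi_{j}$ induced from the characters $\rho\mapsto\zeta^{j}$ of $\langle\rho\rangle$ with $\zeta=e^{\pi i/n}$ ($n-1$ of them for $\SD$, $n/2$ for $\PSD$). For $S=\rho^{R}\cup\rho^{T}\tau$ one computes $\pi_{j}(S)=\operatorname{diag}(\widehat R(j),\widehat R(j'))+N_{j}$, where $\widehat R(j)=\sum_{a\in R}\zeta^{ja}$, $j'=(n-1)j$ or $(n+1)j$ according as $G=\SD$ or $\PSD$, and $N_{j}$ is the anti-diagonal matrix recording $\widehat T(j)=\sum_{t\in T}\zeta^{jt}$. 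Hence $\operatorname{Spec}(X)$ consists of the values $\chi(S)$ over the linear $\chi$ together with the roots of the quadratics $\det(xI-\pi_{j}(S))=0$, the latter with multiplicity $2$; this converts every spectral hypothesis on $X$ into a system of equations in $R$ and $T$.

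Assume $\langle S\rangle=G$, so $X$ is connected, and let $d=\operatorname{diam}(X)$, so that $X$ has exactly $d+1$ distinct eigenvalues. If $d=1$ then $X=K_{4n}$. If $d=2$ then, being connected, regular and $3$-eigenvalued, $X$ is strongly regular; equivalently $S$ is a regular partial difference set in $G$, and using the quadratics above together with the integrality and rationality restrictions for strongly regular graphs I would show that $G$ has no \emph{proper} partial difference set, so $X\cong K_{s\times t}$. The graph $K_{2n,2n}-2nK_{2}$ (which has diameter $3$) arises as $\Cay(G,\langle\rho\rangle\tau\smallsetminus\{\tau\})$ and is recorded separately; from now on $X\notin\{K_{4n},K_{s\times t},K_{2n,2n}-2nK_{2}\}$.

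Now suppose $d\ge3$. The crux is to prove that $X$ is bipartite. Since $-|S|\in\operatorname{Spec}(X)$ amounts to the existence of a linear character $\chi$ with $\chi(s)=-1$ for all $s\in S$, bipartiteness of $X$ is equivalent to $S$ avoiding one of the three index-$2$ subgroups $H$ of $G$, and in the non-bipartite case one derives a contradiction between the multiplicity pattern forced by the quadratics and $d\ge3$. Once $X$ is bipartite with bipartition the cosets of $H$, fix $g_{0}\notin H$ and set $D=Sg_{0}^{-1}\subseteq H$; then $X$ is the incidence graph of the development of $D$ in $H$, and the classical equivalences ``bipartite distance-regular of diameter $3$ $\Longleftrightarrow$ incidence graph of a symmetric $2$-design $\Longleftrightarrow$ $D$ is a nontrivial difference set in $H$'' — applied exactly as in the proof of Theorem~\ref{thm::dih} — yield cases (iv), (v) and (vi) of Theorems~\ref{thm::main1} and \ref{thm::main2} according as $H=\langle\rho\rangle$, $H=\langle\rho^{2},\tau\rangle$ or $H$ is the third index-$2$ subgroup, after translating $D$ by a suitable $g_{0}\in\{\tau,\rho\tau\}$ into the normalized form displayed there.

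It remains to treat $d=4$ and to exclude $d\ge5$. For $d=4$, $X$ is a bipartite distance-regular graph on $4n$ vertices; the feasibility conditions together with the constraints coming from $X$ being a Cayley graph over $G$ show that the only possibility is the spectrum $n,\sqrt{n},0,-\sqrt{n},-n$ with multiplicities $1,n,2n-2,n,1$, i.e.\ the intersection array of a Hadamard graph. Such a graph is an antipodal double cover of $K_{n,n}$, and since the regular $G$-action preserves the antipodal classes these must be the orbits of the unique normal subgroup of order $2$, namely $\langle z\rangle$; consequently $z\notin S$, $S$ is a transversal of the $\langle z\rangle$-cosets lying over the connection set of $K_{n,n}$, and unwinding the lifting (voltage) data produces exactly the autocorrelation conditions on $R$ and $T$ in part (vii), a configuration realizable only for the one index-$2$ subgroup not used in (v)--(vi). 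For $d\ge5$ one passes to a halved graph of $X$, which is a connected distance-regular Cayley graph over $H\in\{\mathbb{Z}_{2n},D_{n},Q_{2n},\mathbb{Z}_{n}\times\mathbb{Z}_{2}\}$ of diameter $\lfloor d/2\rfloor\ge2$; by Theorems~\ref{thm::cir} and \ref{thm::dih} and a similar statement for $Q_{2n}$, such a graph is complete multipartite, complete bipartite minus a perfect matching, a cycle, or the incidence graph of a symmetric design, and each possibility is eliminated by feeding it back through the quadratics of the first paragraph. The decisive obstacle throughout is the exhaustiveness: one must show that no sporadic distance-regular graph occurs, which comes down to a careful analysis of when the $n-1$ (resp.\ $n/2$) quadratics $\det(xI-\pi_{j}(S))=0$ can have all their roots in a prescribed set of $d+1$ reals while the resulting multiplicities still satisfy the integrality, Krein and absolute bounds of a distance-regular graph. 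By contrast, the translations to difference sets and Hadamard configurations, and the halved-graph reduction, are routine once these structural reductions are in place.
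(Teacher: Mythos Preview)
Your outline takes a genuinely different route from the paper (character tables and spectral quadratics in place of Schur rings and imprimitivity systems), and parts of it are sound: the diameter-$3$ bipartite analysis via difference sets in the index-$2$ subgroup, and the halved-graph reduction, match the paper's Subcases C.1--C.3 and Case B. But two steps are real gaps, not just sketches.

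First, your ``crux'' --- that for $d\ge3$ the graph must be bipartite --- is asserted, not argued. You write that in the non-bipartite case ``one derives a contradiction between the multiplicity pattern forced by the quadratics and $d\ge3$'', but this is exactly the hard part. The paper does \emph{not} prove bipartiteness directly: it first invokes a deep fact (Lemma~\ref{lem::2.8}, Enomoto and Kanazawa--Enomoto) that $\PSD$ admits no non-trivial primitive Schur ring, whence a primitive distance-regular $X$ is complete; then in the antipodal non-bipartite case it shows the antipodal quotient is a primitive circulant or $\mathbb{Z}_t\oplus\mathbb{Z}_2$-Cayley graph, hence complete, forcing $d\le3$; finally Lemma~\ref{lem::3.7} eliminates $d=3$ by a careful Fourier-transform argument that reduces to the non-existence of strongly regular circulants with $\lambda'=\mu'$. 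Your spectral quadratics would have to reproduce all of this, and you give no indication of how the multiplicity pattern alone yields the contradiction. The same issue infects your $d=2$ paragraph: showing $\PSD$ has no proper partial difference set is essentially the primitive-Schur-ring obstruction again, and ``integrality and rationality restrictions'' is not a proof.

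Second, in the $d=4$ case you assert that the antipodal classes ``must be the orbits of the unique normal subgroup of order $2$, namely $\langle z\rangle$''. The fibre through $1$ is a subgroup of order $p$ by Lemma~\ref{lem::2.9}, but nothing forces it to be normal, nor to have $p=2$: the paper needs Lemma~\ref{lem::2.2} to cut $p$ down to a prime, the $2$-group hypothesis to get $p=2$, and then case-splits over the three possible order-$2$ subgroups $\langle\rho^n\rangle$, $\langle\tau\rangle$, $\langle\rho^n\tau\rangle$ together with the three index-$2$ subgroups for the bipartition, using Hiramine's non-existence result (Lemma~\ref{lem::2.12}) on $(2n,2,2n,n)$-relative difference sets in cyclic groups to kill the unwanted cases. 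Your single-sentence reduction skips all of this and is not correct as stated.
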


\section{Preliminaries}\label{sec::2}

Let $X$ be a connected graph with vertex set $V(X)$ and edge set $E(X)$. For two vertices $x,y\in V(X)$, denote by $d_X(x,y)$ the {\it distance} between $x$ and $y$, which is the length of a shortest path from $x$ to $y$. The {\it diameter} $d:=d(X)$ is the maximum distance between pairs of vertices in $X$. For $0\le i\le d$, let $N_i(x)=\{y\in V(X)\mid d_X(x,y)=i\}$. We write $N(x)$ for $N_1(x)$ to denote the neighborhood of $x$. The graph $X$ is called {\it distance-regular} if, for all integers $i,j,k$ with $0\le i,j,k\le d$ and all $x,y\in V(X)$ with $d_X(x,y)=k$, the number
\[p_{i,j}^k=|\{z\in V(X)\mid d_X(x,z)=i,d_X(y,z)=j\}|\]
is independent of the choice of $x$ and $y$. The constants $p_{i,j}^k$ ($0\le i,j,k\le d$) are known as the {\it intersection numbers} of $X$. For  convenience, we define $c_i=p^i_{1,i-1}$ ($1\le i\le d$), $a_i=p_{1,i}^i$ ($0\le i\le d$), $b_i=p_{1,i+1}^i$ ($0\le i\le d-1$), $k_i=p_{i,i}^0$ ($0\le i\le d$), and set $c_0=b_d=0$. Observe that $a_0=0$ and $c_1=1$. Furthermore, $a_i+b_i+c_i=k$ ($0\le i\le d$), where $k=k_1$. Following convention, we abbreviate $\lambda=a_1$ and $\mu=c_2$. It is clear that $k_i=|N_i(v)|$ for any $v\in V(X)$. The array
$\{b_0,b_1,\ldots,b_{d-1};c_1,c_2,\ldots,c_d\}$
is called the {\it intersection array} of $X$. A distance-regular graph on $n$ vertices with valency $k$ and diameter $2$ is also called a  \textit{strongly regular graph} with parameters $(n,k,\lambda=a_1,\mu=c_2)$.

Let $X$ be a graph, and let $\mathcal{B}=\{B_1,\ldots,B_\ell\}$ be a partition of $V(X)$ (here $B_i$ are called \textit{blocks}). The \textit{quotient graph} of $X$ with respect to $\mathcal{B}$, denoted by $X_\mathcal{B}$, is the graph with vertex set $\mathcal{B}$, and with  $B_i,B_j$ ($i\neq j$) adjacent if and only if there exists at least one edge between  $B_i$ and $B_j$ in $X$. Moreover, we say that $\mathcal{B}$ is an \textit{equitable partition} of $X$ if there are integers $b_{ij}$ ($1\leq i,j\leq \ell$) such that every vertex in $B_i$ has exactly $b_{ij}$ neighbors in $B_j$. In particular, if every block of $\mathcal{B}$ is an independent set, and between any two blocks there are either no edges or there is a perfect matching, then $\mathcal{B}$ is an equitable partition of  $X$. In this situation,  $X$ is called a \textit{cover} of its quotient graph $X_\mathcal{B}$, and the blocks are called \textit{fibres}. If $X_\mathcal{B}$ is connected, then all fibres have the same size, say $r$, called \textit{covering index}. A graph $X$ of diameter $d$ is \textit{antipodal} if the relation $\mathcal{R}$ on $V(X)$ defined by $u\mathcal{R}v\Leftrightarrow d_X(u,v)\in\{0,d\}$ is an equivalence relation, and the corresponding equivalence classes are called \textit{antipodal classes}. A cover of index $r$, in which the fibres are antipodal classes, is called \textit{an $r$-fold  antipodal cover} of its quotient.

	Suppose that $X$ is a distance-regular graph with diameter $d$. For  $i\in\{1,\ldots,d\}$, the \textit{$i$-th distance graph} $X_i$  is  the graph with vertex set  $V(X)$ in which two distinct vertices are adjacent if and only if they are at distance $i$ in $X$. If, for any $1\le i\le d$, $X_i$ is connected, then $X$ is {\it primitive}. Otherwise, $X$ is {\it imprimitive}.  It is  known that an imprimitive distance-regular graph with valency at least $3$ is either bipartite, antipodal, or both \cite[Theorem 4.2.1]{BCN89}. If $X$ is a bipartite distance-regular graph, then $X_2$ has two connected components, which are called the \textit{halved graphs} of $X$ and denoted by $X^+$ and $X^-$. For convenience, we use  $\frac{1}{2}X$ to represent any one of these two graphs.  If $X$ is an antipodal distance-regular graph, then all antipodal classes have the same size, say $r$, and form an equitable partition $\mathcal{B}^\ast$ of $X$. The quotient graph $\overline{X}:=X_{\mathcal{B}^\ast}$ is called the \textit{antipodal quotient} of $X$. If $d=2$, then $X$ is a complete multipartite graph. If $d\geq 3$, then the edges between  two distinct antipodal classes of $X$ form an empty set or a perfect matching. Thus $X$ is an $r$-fold antipodal  cover of $\overline{X}$ with the antipodal classes as its fibres.

\begin{lemma}[{\cite[Proposition 4.2.2]{BCN89}}] \label{lem::2.1}
		Let $X$ denote an imprimitive distance-regular graph with diameter $d$ and valency $k \geq 3$. Then the following statements hold.
		\begin{enumerate}[(i)]\setlength{\itemsep}{0pt}
			\item If $X$ is bipartite, then the halved graphs of $X$ are non-bipartite distance-regular graphs with diameter $\lfloor\frac{d}{2}\rfloor$.
			\item If $X$  is antipodal, then $\overline{X}$ is a distance-regular graph with diameter $\lfloor\frac{d}{2}\rfloor$.
			\item  If $X$ is antipodal, then $\overline{X}$ is not antipodal, except when $d\leq 3$  (in that case $\overline{X}$  is a complete graph), or when $X$ is bipartite with $d = 4$ (in that case $\overline{X}$ is a complete bipartite graph).
			
			\item If $X$ is antipodal and has odd diameter or is not bipartite, then $\overline{X}$ is primitive.
			\item If $X$ is bipartite and has odd diameter or is not antipodal, then the halved graphs of $X$ are primitive.
			\item If $X$ has even diameter and is both bipartite and antipodal, then $\overline{X}$ is bipartite. Moreover, if $\frac{1}{2}X$ is a halved graph of $X$, then it is antipodal, and $\overline{\frac{1}{2}X}$ is primitive and isomorphic to $\frac{1}{2}\overline{X}$.
		\end{enumerate}
	\end{lemma}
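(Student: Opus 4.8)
I would build the proof on two constructions --- the halved graphs and the antipodal quotient --- together with the dichotomy already recalled just above (Theorem~4.2.1 of \cite{BCN89}): an imprimitive distance-regular graph of valency $\ge 3$ is bipartite, antipodal, or both. Parts (i) and (ii) are the basic facts about these two constructions; parts (iii)--(vi) then say that a single application of the relevant operation already produces a primitive graph, together with how the two operations interact when $X$ is both bipartite and antipodal.

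For the bipartite statements (i), (v) and the first half of (vi), fix the bipartition $V(X)=V_0\cup V_1$ and work with $X^+$ on $V_0$. A walk of length $j$ in $X^+$ lifts to a walk of length $2j$ in $X$ and conversely, so $d_{X^+}(u,v)=\tfrac12 d_X(u,v)$ for $u,v\in V_0$, and hence $X^+$ has diameter $\lfloor d/2\rfloor$. I would then identify the distance-$i$ adjacency matrix of $X^+$ with the restriction to $V_0$ of the distance-$2i$ matrix of $X$; expressing products of these via the intersection numbers of $X$ shows they are constant, so $X^+$ is distance-regular. For non-bipartiteness, exhibit a triangle: choose $x\in V_0$, a neighbour $y$ of $x$, and two further neighbours $u,u'$ of $y$ (possible since $k\ge 3$); then $x,u,u'\in V_0$ are pairwise at distance $2$ in $X$, hence form a triangle in $X^+$.

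For the antipodal statements (ii)--(iv) and the remainder of (vi), the antipodal classes form an equitable partition in which any two classes induce an empty graph or a perfect matching, so for representatives $d_{\overline X}([u],[v])=\min\{d_X(u,v),\,d-d_X(u,v)\}$; thus $\overline X$ has diameter $\lfloor d/2\rfloor$, and collapsing $A_j+A_{d-j}$ onto fibres yields the distance-$j$ matrix of $\overline X$, from which distance-regularity follows as before. Part (iii) is the delicate point: to show $\overline X$ is not antipodal unless $d\le 3$ (where $\overline X$ is complete) or $X$ is bipartite with $d=4$ (where a direct computation of the folded intersection array gives $\overline X\cong K_{m,m}$), I would work with the relations linking the intersection array and the eigenvalue multiplicities of $X$ to those of $\overline X$ and show that a nontrivial antipodal structure on $\overline X$ of diameter $\ge 3$ renders these relations inconsistent; the small cases $d\in\{4,5\}$ are then settled by inspecting parameters directly. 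Granting (iii), together with the analogous observation that an antipodal structure on a halved graph lifts to one on $X$, parts (iv) and (v) follow by combining these with the dichotomy and a parity argument: if $d$ is odd, or $X$ is not bipartite (resp.\ not antipodal), then $\overline X$ (resp.\ $X^\pm$) is neither bipartite nor antipodal, hence primitive. Finally, for (vi) one checks in the bipartite antipodal even-diameter case that halving and folding commute, so $\overline{\tfrac12 X}\cong\tfrac12\overline X$, and primitivity of this common graph follows from (iv)/(v) applied one level down.

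I expect part (iii) to be the main obstacle: it is exactly where soft structural reasoning is insufficient and one must use quantitative information about the intersection array, and it is also what accounts for the special ``even diameter, bipartite and antipodal'' behaviour recorded in (vi).
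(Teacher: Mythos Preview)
The paper does not prove this lemma at all: it is quoted verbatim as a known result from \cite[Proposition~4.2.2]{BCN89} and used as a black box throughout Section~\ref{sec::3}. There is therefore no ``paper's proof'' to compare your proposal against.

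That said, your outline is a reasonable sketch of how the argument in \cite{BCN89} actually goes, and your identification of part~(iii) as the crux is accurate. The one place where your proposal is genuinely thin is precisely there: saying that a putative antipodal structure on $\overline{X}$ ``renders the relations inconsistent'' is not yet an argument, and the actual proof in \cite{BCN89} is more structural than spectral --- it uses the fact that if $\overline{X}$ were antipodal with diameter $\ge 3$, the preimages of its fibres would give a second system of imprimitivity on $X$ incompatible with the known classification of imprimitivity in distance-regular graphs (bipartite or antipodal only). If you want to turn your sketch into a self-contained proof, that is the step that would need to be filled in; otherwise, simply citing \cite{BCN89} as the paper does is the standard practice.
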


Let $G$ be a transitive permutation group acting on a set $\Omega$. An \textit{imprimitivity system}  for $G$ is a partition $\mathcal{B}$ of $\Omega$ which is invariant under the action of $G$, i.e., for every block $B\in \mathcal{B}$ and for every $g\in G$, we have either $B^g=B$ or $B^g\cap B=\emptyset$.
	
\begin{lemma}[{\cite[Theorem 6.2]{GH92}}]\label{lem::2.2}
		Let $X$ be an antipodal distance-regular graph with diameter $d\geq 3$, and let $\mathcal{B}$ be an equitable partition of $X$ with each block contained in a
		fibre of $X$. Assume that no block of $\mathcal{B}$ is a single vertex, or a fibre. Then all
		blocks  of  $\mathcal{B}$ have the same size, and the  quotient graph $X_{\mathcal{B}}$ is an
		antipodal distance-regular graph with diameter $d$. Moreover, $X$ and $X_{\mathcal{B}}$ have isomorphic antipodal quotients.
	\end{lemma}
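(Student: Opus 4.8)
This is Godsil and Hensel's theorem, and the natural route is through the combinatorics of antipodal covers, using only structural facts about such covers together with the distance-regularity of $X$. First I would record the ambient structure: since $X$ is antipodal of diameter $d\ge 3$, all its antipodal classes (which I call fibres) have a common size $r$, each fibre is a coclique, between two distinct fibres there is either no edge or a perfect matching, $\overline X$ is distance-regular of diameter $\lfloor d/2\rfloor$, and $X$ is an $r$-fold antipodal cover of $\overline X$ (this is the paragraph preceding Lemma~\ref{lem::2.1} together with Lemma~\ref{lem::2.1}(ii)). By hypothesis $\mathcal B$ refines the fibre partition.

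The next step is to show that $\mathcal B$ is compatible with the cover. Let $F,F'$ be adjacent fibres and $\phi\colon F\to F'$ the matching bijection. If $B\subseteq F$ is a block, then each $v\in B$ has $\phi(v)$ as its unique neighbour in $F'$, so equitability of $\mathcal B$ forces $\phi(B)$ to lie inside a single block of $F'$; applying the same argument to $\phi^{-1}$ shows $\phi(B)$ is itself a block. Thus $\phi$ carries the blocks inside $F$ bijectively and size-preservingly onto the blocks inside $F'$. Now lift closed walks of $\overline X$ based at the vertex under a fixed fibre $F_0$: this produces a monodromy group $\Gamma\le\mathrm{Sym}(F_0)$, transitive because $X$ is connected, and, by the previous remark, permuting the blocks inside $F_0$. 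Hence those blocks form a block system for the transitive group $\Gamma$, so they all have a common size $m$ dividing $r$; by hypothesis $m\neq 1$ and $m\neq r$, so $1<m<r$, and propagating along $\overline X$ gives that \emph{every} block of $\mathcal B$ has size $m$, with $t:=r/m$ blocks per fibre. Since blocks are cocliques and two blocks (one in $F$, one in $F'$) are joined either by no edge or by the restriction of $\phi$, which is a perfect matching, $X$ is a cover of $X_{\mathcal B}$, and $X_{\mathcal B}$ inherits a $t$-fold cover structure over $\overline X$: its vertices are the blocks, the $t$ blocks inside a common fibre form a coclique (a ``super-fibre''), and super-fibres are matched exactly as the fibres of $\overline X$. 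In particular $A(X_{\mathcal B})$ is the quotient matrix of the equitable partition $\mathcal B$ of $X$, so the spectrum of $X_{\mathcal B}$ lies in that of $X$.

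The crux is to upgrade this structural cover to a \emph{distance-regular} cover of diameter $d$. Because $X$ covers $X_{\mathcal B}$, walks lift and project, so for blocks $B,C$ one gets $d_{X_{\mathcal B}}(B,C)=\min_{x\in C}d_X(u,x)$ for any fixed $u\in B$; hence $d_{X_{\mathcal B}}$ takes values in $\{0,\dots,d\}$ and projects onto $d_{\overline X}$. Using that in an antipodal distance-regular graph $N_d(w)$ is exactly the antipodal class of $w$ with $w$ removed, one checks that $d_{X_{\mathcal B}}(B,C)=d$ precisely when $B,C$ lie in a common fibre of $X$, so the super-fibres are the antipodal classes of $X_{\mathcal B}$; collapsing them returns $\overline X$, which gives the asserted isomorphism of antipodal quotients once distance-regularity is in hand. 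The same fact about $N_d$ also pins down, over a whole fibre $F'$, the multiset $\{d_X(u,x):x\in F'\}$ as a function of $e=d_{\overline X}$, via $\#\{x\in F':d_X(u,x)=j\}=[\,j=e\,]+p^{e}_{j,d}$. What remains, and what is genuinely delicate, is to refine this from fibres to blocks and thereby show that the counts $\#\{\text{blocks }C:d_{X_{\mathcal B}}(B,C)=i,\ d_{X_{\mathcal B}}(C,B')=j\}$ depend only on $d_{X_{\mathcal B}}(B,B')$; this is where one passes to the covering-graph (voltage) formalism, in which the combinatorial regularity of $X$ is transported to $X_{\mathcal B}$ essentially by construction, yielding that the intersection numbers of $X_{\mathcal B}$ are fixed functions of those of $X$ and of $t$. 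Granting this, $X_{\mathcal B}$ is distance-regular, and of diameter $d$ because distinct blocks in a common fibre are at distance $d$ and nothing is farther.

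The main obstacle is precisely this last descent of distance-regularity from $X$ to $X_{\mathcal B}$. Being an antipodal cover of a distance-regular graph is not enough on its own (distinct antipodal covers of the same base can fail to be distance-regular), and neither is the mere bound of $d+1$ distinct eigenvalues inherited from the quotient-matrix relation; one must really exploit that $X_{\mathcal B}$ is sandwiched between $X$ and $\overline X$ and carry $X$'s intersection numbers downward along the monodromy-equivariant covering projection. Everything else — the equitability argument identifying matchings with block maps, the block-system argument forcing equal block sizes, and the identification of the antipodal quotient — is routine bookkeeping about covers.
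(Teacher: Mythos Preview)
The paper does not prove Lemma~\ref{lem::2.2}; it merely quotes the result from Godsil--Hensel \cite{GH92} and uses it as a black box. There is therefore nothing in the paper to compare your argument against.

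As for your sketch on its own terms: the structural part is fine --- the matching-preserves-blocks argument, the monodromy/transitivity step forcing a common block size $m$ with $1<m<r$, and the identification of super-fibres with the antipodal classes of $X_{\mathcal B}$ are all correct and standard. But you yourself flag the real content and then do not supply it: at the step ``Granting this, $X_{\mathcal B}$ is distance-regular'' you have only asserted that the covering/voltage formalism transports the intersection numbers of $X$ down to $X_{\mathcal B}$. That is exactly the substance of Godsil and Hensel's Theorem~6.2, and your outline stops short of it. A spectral bound (at most $d+1$ eigenvalues via the quotient matrix) is, as you note, insufficient; what is actually needed is to exhibit $X_{\mathcal B}$ as an antipodal cover of $\overline{X}$ arising from a \emph{subgroup} of the covering voltage group (equivalently, a normal block system for the monodromy), and then invoke the Godsil--Hensel parameter formulas for antipodal distance-regular covers to read off the intersection array of $X_{\mathcal B}$ from that of $\overline X$ and the index $t=r/m$. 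Without that piece, the proposal is an accurate roadmap but not a proof.
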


	\begin{lemma}[{\cite[p. 425, p. 431]{BCN89}}]\label{lem::2.3}
Let $X$ be an $r$-fold antipodal distance-regular graph on $n$ vertices with  diameter $d$ and valency $k$.
\begin{enumerate}[(i)]\setlength{\itemsep}{0pt}
\item If $X$ is non-bipartite and $d=3$, then $n=r(k+1)$, $k=\mu(r-1)+\lambda+1$, and $X$ has  the intersection array $\{k,\mu(r-1),1;1,\mu,k\}$ and the spectrum $\{k^1,\theta_1^{m_1},\theta_2^k,\theta_3^{m_3}\}$, where
\begin{equation*}
\theta_1=\frac{\lambda-\mu}{2}+\delta,~~\theta_2=-1,~~\theta_3=\frac{\lambda-\mu}{2}-\delta,~~\delta=\sqrt{k+\left(\frac{\lambda-\mu}{2}\right)^2},
\end{equation*}
and
\begin{equation*}
m_1=-\frac{\theta_3}{\theta_1-\theta_3}(r-1)(k+1),~~m_3=\frac{\theta_1}{\theta_1-\theta_3}(r-1)(k+1).
\end{equation*}
Moreover, if $\lambda\neq\mu$, then all eigenvalues  of $X$ are integers.

\item If $X$ is bipartite and $d=4$, then $n=2r^2\mu$, $k=r\mu$, and $X$ has the intersection array $\{r\mu, r\mu-1,(r-1)\mu, 1;1,\mu,r\mu-1,r\mu\}$.
\end{enumerate}
\end{lemma}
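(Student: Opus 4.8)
The plan is to read off everything from the combinatorial constraints that an $r$-fold antipodal distance-regular graph must satisfy, combined with the identification of its antipodal quotient (and, in the bipartite case, its halved graph) provided by Lemma~\ref{lem::2.1}, using only the standard distance-regular identities $k_ib_i=k_{i+1}c_{i+1}$, $a_i+b_i+c_i=k$, $\sum_i k_i=n$, and the fact that across an adjacency in the antipodal quotient the cover contributes a perfect matching between fibres.

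For part (i), I would first observe that when $d=3$ the antipodal quotient $\overline X$ is a complete graph by Lemma~\ref{lem::2.1}(iii); since the fibre of $x$ is adjacent in $\overline X$ to every other fibre and each such adjacency lifts to a perfect matching, $x$ has exactly one neighbour in each other fibre, whence $\overline X=K_{k+1}$ and $n=r(k+1)$. Then $k_3=r-1$ and $\sum_i k_i=n$ give $k_2=k(r-1)$. A one-line argument with the antipodal structure shows $c_3=k$ (every neighbour of a vertex antipodal to $x$ must lie at distance $2$ from $x$), hence $a_3=0$; plugging into $k_1b_1=k_2c_2$ and $k_2b_2=k_3c_3$ yields $b_1=\mu(r-1)$ and $b_2=1$, and $b_1=k-1-a_1$ gives $k=\mu(r-1)+\lambda+1$ and the array $\{k,\mu(r-1),1;1,\mu,k\}$. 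For the spectrum I would use that the four eigenvalues of $X$ are the eigenvalues of the tridiagonal intersection matrix $L_1$; one of them is $-1$, because the $k$-dimensional $(-1)$-eigenspace of $\overline X=K_{k+1}$ lifts to $X$ and a rank count on the block matrix $A_3+I$ (rank $k+1$) shows no further $(-1)$-eigenvectors appear, so $\mathrm{mult}_X(-1)=k$. Dividing the factor $(x-k)(x+1)$ out of $\det(xI-L_1)$ — equivalently, reading off $\mathrm{tr}\,L_1$ and $\det L_1$ — gives $\theta_1+\theta_3=\lambda-\mu$ and $\theta_1\theta_3=-k$, hence the stated formulas with $\delta=\sqrt{k+((\lambda-\mu)/2)^2}$. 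The multiplicities follow from the two linear equations $m_1+m_3=(r-1)(k+1)$ and $m_1\theta_1+m_3\theta_3=\mathrm{tr}\,A=0$. Finally, if $\lambda\neq\mu$ then $\theta_1+\theta_3\neq0$, so $\theta_1,\theta_3$ cannot be a pair of irrational algebraic conjugates — those would force $m_1=m_3$ and then, via $m_1\theta_1+m_3\theta_3=0$, $\theta_1=-\theta_3$; being rational algebraic integers, all eigenvalues are integers.

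For part (ii), bipartiteness gives $a_i=0$ and $b_i+c_i=k$ for all $i$; since $d=4$ is even, Lemma~\ref{lem::2.1}(iii),(vi) tell us $\overline X$ is complete bipartite and the halved graph $\frac12 X$ is antipodal of diameter $2$ with $\overline{\frac12 X}\cong\frac12\overline X$. Regularity forces $\overline X=K_{m,m}$, and the perfect-matching structure of the cover forces $k=m$, so $\overline X=K_{k,k}$ and $n=2rk$. An antipodal distance-regular graph of diameter $2$ is a complete multipartite graph, and $\overline{\frac12 X}\cong\frac12 K_{k,k}=K_k$ pins it down as $K_{k\times r}$, so the distance-$2$ graph of $X$ is $r(k-1)$-regular, i.e.\ $k_2=r(k-1)$. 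Now $k_1b_1=k_2c_2$ with $b_1=k-1$ gives $k=r\mu$ and hence $n=2r^2\mu$; then $\sum_i k_i=n$ gives $k_3=r\mu(r-1)$, and $k_3b_3=k_4c_4$ with $c_4=k$ (antipodal, diameter $4$) forces $b_3=1$, so $c_3=r\mu-1$, $b_2=k-\mu=(r-1)\mu$, and the array is $\{r\mu,r\mu-1,(r-1)\mu,1;1,\mu,r\mu-1,r\mu\}$.

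Once Lemma~\ref{lem::2.1} is granted, nothing here is deep; the work is careful bookkeeping. The two places I expect to need most attention are correctly identifying the quotient $\overline X$ together with its matching structure (and, in (ii), identifying $\frac12 X$), and in (i) nailing down that the eigenvalue $-1$ has multiplicity exactly $k$ rather than merely at least $k$ — everything else is substitution into the standard identities plus the short Galois-conjugacy argument for integrality.
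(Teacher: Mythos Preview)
The paper does not prove this lemma; it is quoted directly from Brouwer--Cohen--Neumaier with only a page citation, so there is no in-paper argument to compare against. Your derivation is correct and is essentially the standard one: identify $\overline X$ (and, in (ii), $\frac12 X$) via Lemma~\ref{lem::2.1}, read off the $k_i$ from the fibre sizes and the perfect-matching structure of the cover, and then solve the recursions $k_ib_i=k_{i+1}c_{i+1}$, $a_i+b_i+c_i=k$, $\sum_i k_i=n$.

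One small point worth tightening. Your justification that $\mathrm{mult}_X(-1)=k$ via ``a rank count on $A_3+I$'' is slightly elliptical: knowing $\mathrm{rank}(A_3+I)=k+1$ only tells you that the $(-1)$-eigenspace of $A_3$ has dimension $(r-1)(k+1)$, not where the $(-1)$-eigenspace of $A$ sits. The clean way to finish is to observe that the space of vectors constant on fibres is $A$-invariant and that $A$ acts on it as the adjacency matrix of $\overline X=K_{k+1}$, contributing eigenvalues $k$ (multiplicity $1$) and $-1$ (multiplicity $k$); since a diameter-$3$ distance-regular graph has exactly four distinct eigenvalues, the remaining two eigenvalues $\theta_1,\theta_3$ are distinct from $k$ and $-1$ and their eigenvectors lie in the complementary sum-to-zero-on-fibres space, giving $m_1+m_3=(r-1)(k+1)$ and $m_2=k$ exactly. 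With that adjustment, everything in your outline goes through as written.
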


A strongly regular graph with parameters $(n,k=\frac{n-1}{2},\lambda=\frac{n-5}{4},\mu=\frac{n-1}{4})$  ($n\equiv 1\pmod 4$) is called a \textit{conference graph}.  Let $\mathbb{F}_q$ denote the finite field of order $q$  where $q\equiv 1\pmod 4$ is a prime power. The \textit{Paley graph} $P(q)$ is  the graph with vertex set $\mathbb{F}_q$ in which two distinct vertices $u,v$ are adjacent if and only if $u-v$ is a square in the multiplicative group of $\mathbb{F}_q$. It is known that Paley graphs are conference graphs \cite{Er63,Sac62}.

\begin{lemma}[{\cite[p. 180]{BCN89}}]\label{lem::2.4}
Let $X$ be a conference graph (or particularly, Paley graph). Then:
\begin{enumerate}[(i)]\setlength{\itemsep}{0pt}
\item $X$ has no distance-regular $r$-fold antipodal covers for $r>1$, except for the pentagon $C_5\cong P(5)$, which is covered by the decagon $C_{10}$;
\item $X$ cannot be a halved graph of a bipartite distance-regular graph.
\end{enumerate}
\end{lemma}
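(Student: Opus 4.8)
The plan is to use three facts about a conference graph $X$ on $n$ vertices: it is a primitive strongly regular graph of valency $k=(n-1)/2$ with $\mu=c_2=(n-1)/4=k/2$ (so $X$ is in the ``half case''); its non-principal eigenvalues $\theta_1,\theta_3=\frac{-1\pm\sqrt n}{2}$ each have multiplicity $(n-1)/2$, and are irrational unless $n$ is a perfect square; and $\theta_1+\theta_3=-1$, $\theta_1\theta_3=-\mu$. Since $X$ has diameter $2$, Lemma~\ref{lem::2.1}(i)--(ii) restricts any putative antipodal cover of which $X$ is the quotient, or any bipartite distance-regular graph of which $X$ is a halved graph, to have diameter $4$ or $5$.

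For part~(i): suppose $Y$ is a distance-regular $r$-fold antipodal cover of $X$ with $r>1$, so $d':=\mathrm{diam}(Y)\in\{4,5\}$. If $d'=4$ and $Y$ is bipartite, then by Lemma~\ref{lem::2.1}(iii) $\overline Y=X$ is complete bipartite, which a conference graph is not (a $K_{m,m}$ has $\lambda=0$, forcing $n=2m=5$, absurd); so a diameter-$4$ cover must be non-bipartite, and I would then run spectral bookkeeping: a cover preserves the valency and the quotient's spectrum, so $Y$ has eigenvalue $k$ once, $\theta_1,\theta_3$ each $(n-1)/2$ times, and exactly two ``new'' eigenvalues $\sigma>\sigma'$ whose multiplicities sum to $(r-1)n$. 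Comparing $\mathrm{tr}(A(Y)^t)$ with $\mathrm{tr}(A(X)^t)$ for $t=1,2,3$ (using $\theta_1+\theta_3=-1$ and $\theta_1^2+\theta_3^2=\frac{n+1}{2}$) gives $\sigma\sigma'=-k$ and $\sigma+\sigma'=a_1(Y)$ with $0\le a_1(Y)\le\lambda$, whence $m_\sigma:m_{\sigma'}=(-\sigma'):\sigma$; if $\sigma,\sigma'$ are rational then an arithmetic check shows $\{\sigma,\sigma'\}$ either meets $\{\theta_1,\theta_3\}$ (so $Y$ has fewer than $5$ distinct eigenvalues, impossible for diameter $4$) or fails the integrality of $m_\sigma,m_{\sigma'}$, while if $\sigma,\sigma'$ are irrational conjugates then $m_\sigma=m_{\sigma'}=(r-1)n/2$ forces $r$ odd and one still has to invoke the Krein conditions / the absolute bound for a diameter-$4$ distance-regular graph to finish. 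If instead $d'=5$, then analyzing the intersection array of an antipodal cover of diameter $5$ whose quotient is an SRG shows that $X$ must be a generalized odd graph, i.e.\ triangle-free, so $\lambda=(n-5)/4=0$, $n=5$, $X\cong P(5)\cong C_5$ and $Y\cong C_{10}$ — the stated exception.

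For part~(ii): suppose $X\cong\tfrac12 Y$ for a bipartite distance-regular graph $Y$ of valency $k'$ with $c_2(Y)=\mu'$. By Lemma~\ref{lem::2.1}(i), $X$ has diameter $\lfloor\mathrm{diam}(Y)/2\rfloor$, so $\mathrm{diam}(Y)\in\{4,5\}$; and if $\mathrm{diam}(Y)=4$ with $Y$ antipodal, then Lemma~\ref{lem::2.1}(vi) makes $\tfrac12 Y$ antipodal, contradicting that a conference graph is primitive, so any diameter-$4$ witness is non-antipodal. On one part of $Y$ one has $A(\tfrac12 Y)=\frac1{\mu'}\bigl(A(Y)^2-k'I\bigr)$, so the eigenvalues of $X$ are exactly the numbers $\frac{\theta^2-k'}{\mu'}$ as $\theta$ ranges over $\mathrm{Spec}(Y)$; since $\mathrm{Spec}(Y)$ is symmetric about $0$ with at most three distinct values of $\theta^2$ and the multiplicity pattern of a diameter-$4$ or diameter-$5$ bipartite distance-regular graph, matching these against $\{((n-1)/2)^1,(\frac{-1+\sqrt n}{2})^{(n-1)/2},(\frac{-1-\sqrt n}{2})^{(n-1)/2}\}$ excludes $\mathrm{diam}(Y)=4$ (multiplicity/parameter count) and in the $\mathrm{diam}(Y)=5$ case leaves only $k'=2$, i.e.\ $Y\cong C_{10}$ and $X\cong C_5$ — excluded once, as is standard here, the distance-regular graphs are assumed to have valency at least $3$.

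The step I expect to be the real obstacle is the non-bipartite diameter-$4$ subcase of~(i) (and, similarly, the diameter-$4$ case of~(ii)): Lemma~\ref{lem::2.1} does not close it, and one must genuinely run the feasibility machinery — intersection-number recursions, integrality of eigenvalues and multiplicities, $\sum_i m_i=|V(Y)|$, and the Krein inequalities — for an antipodal cover (resp.\ bipartite graph) whose quotient (resp.\ halved graph) is pinned to the very rigid conference parameter set $(n,(n-1)/2,(n-5)/4,(n-1)/4)$, and verify that no feasible intersection array survives. Everything else is organized around Lemma~\ref{lem::2.1} and the two elementary spectral identities above.
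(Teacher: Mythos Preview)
The paper does not prove Lemma~\ref{lem::2.4}; it is quoted from \cite[p.~180]{BCN89} and used as a black box, so there is no argument in the paper to compare yours against.

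On the substance of your sketch: the reduction via Lemma~\ref{lem::2.1} to diameters $4$ and $5$, and the elimination of a bipartite diameter-$4$ cover by Lemma~\ref{lem::2.1}(iii), are correct and routine. The difficulty is exactly where you say it is, and you have not resolved it. For the non-bipartite diameter-$4$ cover in (i) and the diameter-$4$ bipartite graph in (ii) you only list the tools (multiplicity integrality, Krein bounds) without running them against the conference parameters; that computation \emph{is} the lemma, and until it is carried out you have an outline, not a proof. Your diameter-$5$ argument in (i) has the same defect: the assertion that a diameter-$5$ antipodal cover forces its strongly regular quotient to be triangle-free is stated without justification, and it is not a general structural fact --- if you write out the antipodal array $\{k,\,k-1-\lambda,\,b_2,\,\mu,\,1;\,1,\,\mu,\,b_2,\,k-1-\lambda,\,k\}$ you get $a_1(Y)=\lambda(X)$ with nothing yet forcing it to vanish, so here too a genuine eigenvalue/multiplicity calculation specific to the conference case is required. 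Your closing remark on (ii) is well taken: $C_5$ really is the halved graph of $C_{10}$, so (ii) relies on the standing convention in \cite{BCN89} that valency is at least $3$.
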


A \textit{dicyclic group} of order $4n$ is defined as 
$$
\mathrm{Dic}_n=\langle\rho,\tau\mid \rho^{2n}=1,\tau^2=\rho^n,\tau^{-1}\rho\tau=\rho^{-1}\rangle.
$$ 
Cayley graphs over dicyclic groups are called \textit{dicirculants}. A dicirculant of order $4n$ can be denoted  by  $\mathrm{Dic}(n,R,T):=\Cay(D_n,\rho^R\cup \rho^T\tau)$, where $R=-R\subseteq \mathbb{Z}_{2n}\setminus\{0\}$ and $T=n+T\subseteq \mathbb{Z}_{2n}$. In \cite{HDL23}, Huang, Das and Lu obtained a partial characterization of  distance-regular dicirculants.

\begin{lemma}[{\cite[Theorem 1.1]{HDL23}}]\label{lem::2.5}
Let $X$ be a dicirculant on $4n$ vertices. Then $X$ is distance-regular if and only if it is isomorphic to one of the following graphs:
\begin{enumerate}[(i)]\setlength{\itemsep}{0pt}
\item the complete graph $K_{4n}$;
\item the complete multipartite graph $K_{t\times m}$  with $tm=4n$;
\item the graph $\mathrm{Dic}(n,R,T)$ for even $n$, where $R=-R$ and $T=n+T$ are non-empty subsets of  $2\mathbb{Z}_{2n}+1$ such that $|R\cap T|<n$ and $|R\cap (i+R)|+|T\cap (i+T)|=2|(j+R)\cap T|$ for all $i,j\in 2\mathbb{Z}_{2n}$, $i\neq 0$.
\end{enumerate}
In particular, the graph in  (iii) is a non-antipodal bipartite non-trivial distance-regular graph with  diameter $3$.
\end{lemma}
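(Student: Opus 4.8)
The plan is to read off the spectrum of $\mathrm{Dic}(n,R,T)$ from the irreducible representations of the dicyclic group and then to run a case analysis organised by the imprimitivity structure of Lemma~\ref{lem::2.1}. Set $\omega=e^{\pi i/n}$. Besides its four linear characters, $\mathrm{Dic}_n$ has $n-1$ two-dimensional irreducible representations $\varphi_j$ ($1\le j\le n-1$), with $\varphi_j(\rho)=\mathrm{diag}(\omega^{j},\omega^{-j})$ and $\varphi_j(\tau)=\left(\begin{smallmatrix}0&1\\(-1)^{j}&0\end{smallmatrix}\right)$; the latter is forced by $\tau^2=\rho^n$ because $\omega^{jn}=(-1)^{j}$. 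Writing $\alpha_j=\sum_{r\in R}\omega^{jr}$ and $\beta_j=\sum_{t\in T}\omega^{jt}$, the eigenvalues of $X$ coming from $\varphi_j$ are those of $M_j=\left(\begin{smallmatrix}\alpha_j&\beta_j\\(-1)^{j}\overline{\beta_j}&\alpha_j\end{smallmatrix}\right)$. Here the two defining hypotheses are exactly what is needed: $R=-R$ makes each $\alpha_j$ real, while $T=n+T$ pairs $t$ with $t+n$ and so forces $\beta_j=0$ for every odd $j$ (the pair contributes $\omega^{jt}(1+(-1)^{j})$). Hence the eigenvalues of $X$ are the $\alpha_j$ with $j$ odd, the $\alpha_j\pm|\beta_j|$ with $j$ even, and the four character values; in particular the valency is $|R|+|T|$ and connectivity forces $T\neq\emptyset$.

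Next I would dispose of the primitive and antipodal cases. In the primitive case one shows that the diameter cannot exceed $2$ and then excludes every nontrivial strongly regular example, both steps resting on the Galois action on the Gaussian periods $\alpha_j$ and on the fact that no conference graph exists at the even order $4n$ (as $4n\not\equiv1\pmod4$); this leaves only the complete graph $K_{4n}$. For the antipodal cases the decisive point is a parity obstruction supplied by the centre. The antipodal classes of a distance-regular Cayley graph form a $\mathrm{Dic}_n$-invariant partition, hence are the cosets of a normal subgroup $N$ whose order is the covering index $r$. Now the number of common neighbours of $1$ and $\rho^{n}$ equals $|R\cap(n+R)|+|T\cap(n+T)|$, and $T=n+T$ gives $T\cap(n+T)=T$, so this number is at least $|T|>0$; thus $\rho^{n}$ lies at distance $2$ and can never be antipodal. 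Since $\rho^{n}$ is the unique involution of $\mathrm{Dic}_n$, every subgroup of even order contains it, so no antipodal cover of even index can occur. In particular there is no $2$-fold cover, and hence no Hadamard graph: this is precisely the feature that shortens the dicirculant list relative to the semi-dihedrant list, where the analogous relation $T\cap(n+T)=\emptyset$ is admissible and does yield Hadamard graphs (case (vii) of Theorems~\ref{thm::main1} and~\ref{thm::main2}). The remaining odd-index antipodal covers of complete graphs (Lemma~\ref{lem::2.3}(i)) are removed by the integrality constraints that the eigenvalue $-1$ of multiplicity $k$ and the Galois action on $\{\alpha_j\}$ impose at the orders $4n$.

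By Lemma~\ref{lem::2.1} what remains is bipartite. A homomorphism $f\colon\mathrm{Dic}_n\to\mathbb{Z}_2$ exists only for even $n$, with $\rho\mapsto1$ and $\tau\mapsto0$, and bipartiteness forces $R,T\subseteq 2\mathbb{Z}_{2n}+1$; the colour classes are $P_0=f^{-1}(0)$ and $P_1=f^{-1}(1)$, each of size $2n$. Since the halved graphs of a bipartite diameter-$3$ distance-regular graph are complete (Lemma~\ref{lem::2.1}(i)), distance-regularity reduces to the demand that any two distinct vertices in the same colour class have a constant number $\mu=c_2$ of common neighbours. For two vertices of $\langle\rho\rangle\cap P_0$ at $\rho$-difference $\rho^{i}$ (with $i$ even, $i\neq0$) this number is $|S\cap\rho^{i}S|=|R\cap(i+R)|+|T\cap(i+T)|$, while for a pair $1,\rho^{j}\tau$ with $j$ even it is $|(j+R)\cap T|+|R\cap(n+j-T)|$; using $R=-R$ and $T=n+T$ one checks $|R\cap(n+j-T)|=|(j+R)\cap T|$, so the second count equals $2|(j+R)\cap T|$. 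Equating the two constant values gives precisely the identity $|R\cap(i+R)|+|T\cap(i+T)|=2|(j+R)\cap T|$ of case (iii). The inequality $|R\cap T|<n$ rules out $S=P_1$, i.e. the degeneration to the complete bipartite graph $K_{2\times 2n}$ of case (ii), so the diameter is exactly $3$. Conversely, under these conditions the bipartite graph is the incidence graph of a symmetric design, and therefore a non-antipodal bipartite distance-regular graph of diameter $3$.

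The main obstacle is twofold. The delicate direction is the converse inside the bipartite case: one must upgrade the constancy of $c_2$ to the constancy of all intersection numbers, in particular $c_3$, which I would derive from vertex-transitivity together with the symmetric-design structure forced by the correlation identity rather than from $c_2$ alone. The second hurdle is the clean elimination of the exotic primitive strongly regular and odd-index antipodal candidates (including the bound on the diameter in the primitive case), where the central-involution trick no longer applies and one must argue through the Galois action on the periods $\alpha_j$ and the admissible spectra of Lemma~\ref{lem::2.3}(i). Conceptually, though, the heart of the matter is short: the single relation $\tau^2=\rho^n$ both forces $T\cap(n+T)=T\neq\emptyset$ (killing every even-index antipodal cover, hence all Hadamard graphs) and leaves the non-trivial coset without involutions (so that $K_{2n,2n}-2nK_2$ is not a dicirculant at all), which together explain why the dicirculant list contains neither a Hadamard family nor a complete-bipartite-minus-matching graph.
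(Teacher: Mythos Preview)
This lemma is not proved in the present paper at all: it is quoted from \cite[Theorem~1.1]{HDL23} and invoked only as a black box (in Corollary~\ref{cor::3.1} and in Case~B of the proof of Theorem~\ref{thm::main1}). There is therefore no ``paper's own proof'' against which to compare your proposal.

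Judged on its own, your outline has the right global shape, and the observation that $\tau^{2}=\rho^{n}$ forces $|N(1)\cap N(\rho^{n})|\ge |T|>0$, hence $\rho^{n}\in\mathcal{N}_{2}$, is a clean way to kill every antipodal class of even size. There is, however, a concrete gap in the bipartite step. You write that ``a homomorphism $f\colon\mathrm{Dic}_n\to\mathbb{Z}_2$ exists only for even $n$, with $\rho\mapsto1$ and $\tau\mapsto0$'', and then deduce $R,T\subseteq 2\mathbb{Z}_{2n}+1$. But $\langle\rho\rangle$ is an index-$2$ subgroup of $\mathrm{Dic}_n$ for \emph{every} $n$, and for even $n$ there are three index-$2$ subgroups, namely $\langle\rho\rangle$, $\langle\rho^{2},\tau\rangle$ and $\langle\rho^{2},\rho\tau\rangle$. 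Your common-neighbour calculation handles only the bipartition with $H=\langle\rho^{2},\tau\rangle$; the case $H=\langle\rho\rangle$ (i.e.\ $R=\emptyset$, which is perfectly consistent with connectivity) and the case $H=\langle\rho^{2},\rho\tau\rangle$ (where $T\subseteq 2\mathbb{Z}_{2n}$, not $2\mathbb{Z}_{2n}+1$) are never addressed. Each of these must be shown either to be isomorphic to an instance of (iii) or to degenerate into (i)--(ii), and that is real work, not a formality. Finally, the primitive case and the odd-index antipodal covers, which you yourself flag as the main obstacle, are not actually disposed of by the Galois-action remarks you offer; in \cite{HDL23} these occupy the bulk of the argument.
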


 The following result is an analogue of the characterization of distance-regular Cayley graphs over dihedrant groups given in \cite{MP07}. 

\begin{lemma}[{\cite[Theorem 4.1]{ZLH23}}]\label{lem::2.6}
Let $X$ be a Cayley graph over $\mathbb{Z}_n\oplus\mathbb{Z}_2$ with $n>2$ being even. Then $X$ is distance-regular if and only if it is isomorphic to one of the following graphs:
\begin{enumerate}[(i)]
\setlength{\itemsep}{0pt}
\item the complete graph $K_{2n}$;
\item the complete multipartite graph $K_{t\times m}$  with $tm=2n$;
\item the complete bipartite graph without a perfect matching $K_{n,n} - nK_2$;
\item the graph $\Cay(\mathbb{Z}_n\oplus\mathbb{Z}_2,(R_0,0)\cup (R_1,1))$, where $R_0=-R_0$ and $R_1=-R_1$ are non-empty subsets of  $2\mathbb{Z}_{n}+1$ such that $(-1+R_0,0)\cup(-1+R_1,1)$ is a non-trivial difference set in $2\mathbb{Z}_n\oplus\mathbb{Z}_2$.
\end{enumerate}
In particular, the graph in  $(iv)$ is a non-antipodal bipartite non-trivial distance-regular graph with  diameter $3$.
\end{lemma}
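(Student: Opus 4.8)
The plan is to adapt the Schur-ring-and-Fourier-transform method used by Miklavi\v{c} and Poto\v{c}nik for dihedrants in \cite{MP07}, exploiting that $G:=\mathbb{Z}_n\oplus\mathbb{Z}_2$ is abelian with the cyclic subgroup $\langle(1,0)\rangle\cong\mathbb{Z}_n$ of index $2$, so that $X=\Cay(G,(R_0,0)\cup(R_1,1))$ is a bi-Cayley graph over $\mathbb{Z}_n$. Since $G$ is abelian all of its irreducible characters are one-dimensional, and because $R_0=-R_0$, $R_1=-R_1$ the eigenvalues of $X$ are the real numbers $\widehat{R_0}(a)+\widehat{R_1}(a)$ and $\widehat{R_0}(a)-\widehat{R_1}(a)$ for $a\in\mathbb{Z}_n$, where $\widehat{R}(a)=\sum_{r\in R}\cos(2\pi ar/n)$; if $X$ is distance-regular of diameter $d$, then the number of distinct such values equals $d+1$. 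The proof then splits according to $d$, and for the converse one verifies that each listed graph is distance-regular.

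For $d=1$ we get $K_{2n}$, case (i). For $d=2$, $X$ is strongly regular: if it is imprimitive it must be a complete multipartite graph, since a disjoint union of cliques is excluded by connectivity, giving (ii); to exclude a primitive strongly regular Cayley graph over $G$ one notes that $|G|=2n\equiv 0\pmod 4$ forbids conference graphs, and then translates the regular partial difference set condition on $S$ into character identities over the cyclic index-$2$ subgroup, so that the classification of strongly regular circulants contained in Theorem \ref{thm::cir} (which has no primitive member of even order) rules the possibility out.

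For $d\ge 3$ we first argue that $X$ cannot be primitive: a primitive distance-regular circulant is $K_m$, $C_p$ or $P(p)$ by Theorem \ref{thm::cir}, none of which lifts to the non-cyclic group $G$, so the reduction to the cyclic index-$2$ subgroup forces a contradiction. Hence, by Lemma \ref{lem::2.1}, $X$ is bipartite, antipodal, or both. If $X$ is antipodal, its antipodal quotient $\overline X$ is a distance-regular Cayley graph over a proper quotient of $G$ of strictly smaller order, so induction on $|G|$ together with Lemmas \ref{lem::2.1}(iii), \ref{lem::2.3} and \ref{lem::2.4} shows that no antipodal cover survives — in particular the Hadamard-type examples occurring over the semi-dihedral groups do not occur over $G$. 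In the remaining case $X$ is bipartite and non-antipodal; by Lemma \ref{lem::2.1}(i),(v) its halved graphs are primitive distance-regular, and since each halved graph is a Cayley graph over an index-$2$ subgroup of $G$ (of even order $>2$), the same circulant/induction input forces its diameter to be $1$, so $d=3$ and the halved graphs are complete graphs $K_n$; equivalently $X$ is the incidence graph of a symmetric $2$-design.

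It remains to reconcile this with a difference set and run the converse. Being bipartite and $G$-vertex-transitive, the bipartition of $X$ is the coset partition of an index-$2$ subgroup $H\le G$, and the regular action of $H$ on one part preserves the design, so the design is the development of a difference set $D\subseteq H$ and, after translating $S$ by one of its elements, $X$ is the bipartite difference graph $\Gamma_D$ of $D$. Examining the three index-$2$ subgroups of $G$, one shows that the only genuinely new family is the one with $H=2\mathbb{Z}_n\oplus\mathbb{Z}_2$ — which is exactly the requirement $R_0,R_1\subseteq 2\mathbb{Z}_n+1$ in (iv), with $D=(-1+R_0,0)\cup(-1+R_1,1)$ — while the presentations coming from the other (cyclic) index-$2$ subgroups reduce, up to graph isomorphism, to this family or to the trivial design, i.e. to $K_{n,n}-nK_2$ in (iii). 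Conversely, (i)–(iii) are classical, and for (iv) the bipartite difference graph of a non-trivial $(n,k,\lambda)$-difference set is distance-regular with intersection array $\{k,k-\lambda,1;1,\lambda,k\}$, hence bipartite, non-antipodal and non-trivial of diameter $3$. I expect the two non-occurrence statements — that $G$ admits no primitive distance-regular Cayley graph of diameter $\ge 2$ beyond $K_{2n}$, and no antipodal one — to be the main obstacle, since these eliminate candidates rather than construct them; the cleanest route is to carry out the character-sum/Schur-ring computation explicitly rather than to appeal to a single external classification.
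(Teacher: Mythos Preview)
The lemma is quoted from \cite{ZLH23} and is not proved in the present paper, so there is no in-paper proof to compare your proposal against. That said, your outline follows the natural template of \cite{MP07}, and the overall architecture --- primitive $\Rightarrow$ complete; otherwise bipartite/antipodal analysis via Lemma~\ref{lem::2.1}; bipartite non-antipodal of diameter~$3$ $\Leftrightarrow$ difference set via Lemma~\ref{lem::2.10} --- is the right one and is exactly how the surrounding paper handles the (pseudo-)semi-dihedral cases.

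Two places in your sketch are genuinely incomplete rather than merely abbreviated. First, your exclusion of a primitive (strongly regular) Cayley graph over $G=\mathbb{Z}_n\oplus\mathbb{Z}_2$ appeals to ``the classification of strongly regular circulants contained in Theorem~\ref{thm::cir}'', but $G$ is not cyclic; you need either a $B$-group/primitive-Schur-ring statement for $\mathbb{Z}_n\oplus\mathbb{Z}_2$ (the analogue of Lemma~\ref{lem::2.8}) or a direct character-sum argument --- simply ``reducing to the cyclic index-$2$ subgroup'' does not by itself rule out primitivity. Second, in the bipartite diameter-$3$ endgame you assert that the two cyclic index-$2$ subgroups of $G$ give nothing new beyond family~(iv) and $K_{n,n}-nK_2$. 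This is the substantive combinatorial step: a non-trivial difference set in the cyclic subgroup $\mathbb{Z}_n\oplus\{0\}$ (or in $\langle(1,1)\rangle$) would a priori give a further bipartite diameter-$3$ example, and one must actually exhibit the graph isomorphism collapsing it into~(iv). Without that argument the list (i)--(iv) has not been shown to be exhaustive.
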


Let $G$ be a group, and let $\mathbb{Z}G$ be the group algebra of $G$ over the ring of  integers $\mathbb{Z}$. For an integer $m$ and an element $a=\sum_{g\in G} a_g g\in \mathbb{Z}G$, we define
$$a^{(m)}=\sum_{g\in G} a_g g^m\in \mathbb{Z}G.$$
Also, for a subset $T\subseteq G$, we denote 
$$
	\underline{T}=\sum_{g\in T}g\in \mathbb{Z}G.
$$
If there is a  partition $\{T_0,T_1,\ldots,T_r\}$  of $G$ satisfying
	\begin{enumerate}[(i)]
		\item $T_0=\{e\}$,
		\item for any $i\in \{1,\ldots,r\}$, there exists some $j\in \{1,\ldots,r\}$ such that $\underline{T_i}^{(-1)}=\underline{T_j}$,
		\item  for any $i,j\in \{1,\ldots,r\}$, there exist integers $p_{i,j}^k$ ($0\leq k\leq r$) such that $$\underline{T_i}\cdot \underline{T_j}=\sum_{k=0}^rp_{ij}^k \cdot \underline{T_k},$$
	\end{enumerate}
	then the $\mathbb{Z}$-module $\mathcal{S}$ spanned by $\underline{T_0},\underline{T_1},\ldots,\underline{T_r}$ is exactly a subalgebra of $\mathbb{Z}G$, and called a \textit{Schur ring} over $G$. In this situation, the  basis $\{\underline{T_0},\underline{T_1},\ldots,\underline{T_r}\}$ is called the \textit{simple basis} of the Schur ring $\mathcal{S}$. We say that the Schur ring  $\mathcal{S}$ is \textit{primitive} if $\langle T_i\rangle=G$ for every $i\in \{1,\ldots,r\}$. In partitular, if $T_0=\{e\}$ and $T_1=G\setminus \{e\}$, then the Schur ring spanned  by $\underline{T_0}$ and $\underline{T_1}$ is called \textit{trivial}. Note that a trivial Schur ring is primitive.

	Let $X=\Cay(G,S)$ be a Cayley graph of diameter $d$. Denote by
	$$
	\mathcal{N}_i=\{g\in G\mid d_X(g,1)=i\}.
	$$
	The $\mathbb{Z}$-submodule of  $\mathbb{Z}G$ spanned by  $\underline{\mathcal{N}_0},\underline{\mathcal{N}_1},\ldots,\underline{\mathcal{N}_d}$ is called the \textit{distance module} of  $X$, and is denoted by \textit{$\mathcal{D}_\mathbb{Z}(G,S)$}. In \cite{MP03},  Miklavi\v{c} and Poto\v{c}nik  provided an algebraic characterization for  (primitive) distance-regular Cayley graphs in terms of the Schur ring and the distance module.
	
	\begin{lemma}[{\cite[Proposition 3.6]{MP03}}] \label{lem::2.7}
		Let $X=\Cay(G,S)$  denote a distance-regular Cayley graph and $\mathcal{D}=\mathcal{D}_\mathbb{Z}(G,S)$ its distance module. Then:
		\begin{enumerate}[(i)]\setlength{\itemsep}{0pt}
			\item $\mathcal{D}$ is a (primitive) Schur ring over $G$ if and only if $X$ is a (primitive) distance-regular graph;
			\item $\mathcal{D}$ is the trivial Schur ring  over $G$ if and only if $X$ is isomorphic to the complete graph.
		\end{enumerate}
	\end{lemma}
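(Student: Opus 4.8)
The plan is to show that when $X=\Cay(G,S)$ is distance-regular with diameter $d$, the distance partition $\mathcal{P}=\{\mathcal{N}_0,\mathcal{N}_1,\ldots,\mathcal{N}_d\}$ of $G$ is precisely the simple basis of a Schur ring, namely of $\mathcal{D}$, and then to translate the combinatorial notions of primitivity and completeness of $X$ into the corresponding algebraic properties of $\mathcal{D}$.

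First I would verify the three axioms in the definition of a Schur ring for the partition $\mathcal{P}$. Axiom (i) holds since $\mathcal{N}_0=\{1\}$. For axiom (ii) the point is that left translation is an automorphism of $\Cay(G,S)$, whence $d_X(g,1)=d_X(1,g^{-1})=d_X(g^{-1},1)$, the last equality because $X$ is undirected; thus $\mathcal{N}_i^{-1}=\mathcal{N}_i$ and $\underline{\mathcal{N}_i}^{(-1)}=\underline{\mathcal{N}_i}$, so one may take $j=i$ there. For axiom (iii) I would compute, for a fixed $g\in G$, the coefficient of $g$ in $\underline{\mathcal{N}_i}\cdot\underline{\mathcal{N}_j}$, which equals $|\{a\in\mathcal{N}_i:a^{-1}g\in\mathcal{N}_j\}|$; using undirectedness and left translation this rewrites as $|\{a\in V(X):d_X(1,a)=i,\ d_X(g,a)=j\}|$, and distance-regularity shows it equals the intersection number $p^k_{i,j}$ whenever $d_X(1,g)=k$, i.e.\ whenever $g\in\mathcal{N}_k$. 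Since the $\mathcal{N}_k$ are pairwise disjoint and nonempty, $\underline{\mathcal{N}_0},\ldots,\underline{\mathcal{N}_d}$ are $\mathbb{Z}$-linearly independent, and the relation $\underline{\mathcal{N}_i}\cdot\underline{\mathcal{N}_j}=\sum_k p^k_{i,j}\,\underline{\mathcal{N}_k}$ together with $\underline{\mathcal{N}_0}=1$ shows that $\mathcal{D}$ is a Schur ring over $G$ with simple basis $\{\underline{\mathcal{N}_0},\ldots,\underline{\mathcal{N}_d}\}$. Since $X$ is assumed distance-regular, this settles (i) in its non-parenthetical reading.

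For the primitivity clause I would observe that the $i$-th distance graph of $X$ is itself a Cayley graph: $X_i=\Cay(G,\mathcal{N}_i)$ for $1\le i\le d$, because $d_X(g,h)=i\Leftrightarrow g^{-1}h\in\mathcal{N}_i$ and $\mathcal{N}_i=\mathcal{N}_i^{-1}\not\ni 1$. A Cayley graph $\Cay(G,\mathcal{N}_i)$ is connected if and only if $\langle\mathcal{N}_i\rangle=G$; hence $X$ is primitive (equivalently, $X_i$ is connected for all $1\le i\le d$) if and only if $\langle\mathcal{N}_i\rangle=G$ for every such $i$, which is exactly the condition for $\mathcal{D}$ to be a primitive Schur ring. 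This completes (i). For (ii), $\mathcal{D}$ equals the trivial Schur ring precisely when its simple basis is $\{\underline{\{1\}},\underline{G\setminus\{1\}}\}$, i.e.\ when $d=1$; but a distance-regular graph of diameter $1$ is the complete graph $K_{|G|}=\Cay(G,G\setminus\{1\})$, and conversely $K_{|G|}$ has diameter $1$. This establishes (ii).

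The only mildly delicate step is the computation for axiom (iii): one must keep track of the orientation conventions in $d_X(\cdot,\cdot)$ and use both the symmetry of graph distance and the vertex-transitivity of $X$ to recognise $|\{a\in\mathcal{N}_i:a^{-1}g\in\mathcal{N}_j\}|$ as a genuine intersection number $p^k_{i,j}$, with $k$ determined by which class $\mathcal{N}_k$ contains $g$. Everything else is a direct unwinding of definitions.
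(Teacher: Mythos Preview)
The paper does not prove this lemma; it merely cites it as Proposition~3.6 of \cite{MP03} and uses it as a black box (immediately combining it with Lemma~\ref{lem::2.8} to deduce Corollary~\ref{cor::2.1}). So there is no proof in the paper to compare against.

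Your argument is correct and is essentially the standard one. The computation for axiom~(iii) is fine: the key identity $d_X(a^{-1}g,1)=d_X(g,a)$ via left translation is exactly what is needed to turn the group-ring coefficient into an intersection number. The identification $X_i=\Cay(G,\mathcal{N}_i)$ and the equivalence ``connected $\Leftrightarrow$ $\langle\mathcal{N}_i\rangle=G$'' handle the primitivity clause cleanly, and part~(ii) is an immediate diameter-$1$ observation. One small comment: the lemma as stated already hypothesises that $X$ is distance-regular, so the non-parenthetical ``if and only if'' in (i) is really just asserting that $\mathcal{D}$ is a Schur ring; you noted this, and your proof of the converse direction (Schur ring $\Rightarrow$ distance-regular) is not needed under this reading, though it would be easy to supply by reversing the axiom~(iii) computation.
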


	\begin{lemma}[\cite{Eno69,KE68}]\label{lem::2.8}
For every $n=2^r>4$, there are no non-trivial primitive Schur rings over the semi-dihedral group $\SD$ and the pseudo-semi-dihedral group $\PSD$.
\end{lemma}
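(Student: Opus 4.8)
The plan is to show, equivalently, that $\SD$ and $\PSD$ (for $n=2^r>4$) are B-groups in the sense of Burnside and Schur, by proving that every \emph{non-trivial} Schur ring $\mathcal{S}$ over $G\in\{\SD,\PSD\}$ possesses a proper non-trivial $\mathcal{S}$-subgroup, i.e.\ a subgroup $H$ with $1\neq H\neq G$ and $\underline{H}\in\mathcal{S}$. This immediately forces imprimitivity: any basic set $\underline{T_i}$ with $T_i\subseteq H\setminus\{1\}$ (such $T_i$ exists because $H\setminus\{1\}$ is a non-empty union of basic sets) satisfies $\langle T_i\rangle\leq H<G$, so not every basic set generates $G$. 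Conversely, if $\mathcal{S}$ is not primitive then some $\langle T_i\rangle$ ($i\geq1$) is such a subgroup; and the trivial Schur ring, having no proper non-trivial $\mathcal{S}$-subgroup, needs no separate discussion. Hence everything reduces to: \emph{exhibit, for each non-trivial Schur ring $\mathcal{S}$ over $G$, a proper non-trivial $\mathcal{S}$-subgroup.}

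First I would record the arithmetic of $G$: in both groups $C:=\langle\rho\rangle\cong\mathbb{Z}_{2n}$ has index $2$, the element $\rho^{n}$ is the unique involution of $C$ and is central in $G$, a direct computation of squares gives $\mho_1(G)=\langle\rho^2\rangle$, and from $[G,G]=\langle\rho^2\rangle$ for $\SD$ (resp.\ $[G,G]=\langle\rho^n\rangle$ for $\PSD$) one gets $\Phi(G)=\langle\rho^2\rangle$, a characteristic cyclic subgroup of order $2^r\geq 8$ and index $4$; the remaining two subgroups of index $2$ are $\langle\rho^2,\tau\rangle$ and $\langle\rho^2,\rho\tau\rangle$, which are exactly the subgroups appearing in parts (v)--(vi) of Theorems~\ref{thm::main1} and~\ref{thm::main2}. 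I would then deploy the standard calculus of $\mathcal{S}$-subgroups --- the lattice of $\mathcal{S}$-subgroups is closed under $\langle\cdot\rangle$ and intersection; for a basic set $T$ the groups $\langle T\rangle$, $\langle T\,T^{(-1)}\rangle$ and $\mathrm{rad}(T)=\{g\in G:gT=T=Tg\}$ are $\mathcal{S}$-subgroups; and for an $\mathcal{S}$-subgroup $N$ there are both a section Schur ring $\mathcal{S}_N$ over $N$ and a quotient Schur ring $\mathcal{S}/N$ over $G/N$ --- to track the basic set containing a generator $\rho$ of $C$, its products $\underline{T}\,\underline{T}^{(-1)}$ and its image $\underline{T}^{(2)}$ under the square map, following which cosets of $C$ (and of $\langle\rho^2\rangle$) these can reach; the non-abelian relations $\tau\rho\tau=\rho^{n\mp 1}$ enter only through this bookkeeping.

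A clean sub-strategy to try first is an induction on $r$ via the quotient $\overline{G}=G/\langle\rho^n\rangle$, which is dihedral of order $2n$ for $\SD$ and is $\mathbb{Z}_n\oplus\mathbb{Z}_2$ (with $n>2$ even) for $\PSD$: once $\langle\rho^n\rangle$ is known to be an $\mathcal{S}$-subgroup, $\mathcal{S}/\langle\rho^n\rangle$ is a Schur ring over $\overline{G}$, and the absence of non-trivial primitive Schur rings over $\overline{G}$ --- the dihedral case coming from \cite{MP07}, the abelian case from the analysis of Schur rings over $\mathbb{Z}_n\oplus\mathbb{Z}_2$ behind Lemma~\ref{lem::2.6} --- yields a proper non-trivial $\mathcal{S}$-subgroup of $G$ by pull-back (and if $\mathcal{S}/\langle\rho^n\rangle$ happens to be trivial, then $\langle\rho^n\rangle$ itself is the required subgroup). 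Analogously, if instead $C$ or $\langle\rho^2\rangle$ is shown to be an $\mathcal{S}$-subgroup, one may appeal to the structure theory of Schur rings over cyclic $2$-groups that underlies Theorem~\ref{thm::cir}.

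The step I expect to be the main obstacle is the one at which all of these approaches converge: ruling out an ``exotic'' non-trivial Schur ring whose basic sets are distributed over the two cosets of $C$, and the four cosets of $\langle\rho^2\rangle$, so artfully that none of $\langle\rho^n\rangle$, $\langle\rho^2\rangle$, $C$, $\langle\rho^2,\tau\rangle$, $\langle\rho^2,\rho\tau\rangle$ --- and no other proper subgroup --- is an $\mathcal{S}$-subgroup. Settling this requires a finite but delicate case analysis on the intersections of the basic sets with these subgroups, organized by the parity of the exponents, and it is precisely this classification of the primitive Schur rings (equivalently, the primitive permutation representations) of the $2$-groups with a cyclic maximal subgroup that is carried out in \cite{Eno69,KE68}. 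Finally, the hypothesis $n>4$ is genuinely needed here, the order-$16$ members of these families being exceptional.
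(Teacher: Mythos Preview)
The paper does not prove this lemma; it is quoted directly from \cite{Eno69,KE68} without argument, so there is no ``paper's own proof'' to compare against. Your outline is a faithful summary of the B-group strategy those references pursue, and the structural facts you record about $\SD$ and $\PSD$ (the commutator subgroups, the Frattini subgroup $\langle\rho^2\rangle$, the identification of $G/\langle\rho^n\rangle$ with $D_n$ or $\mathbb{Z}_n\oplus\mathbb{Z}_2$) are all correct. However, as you yourself acknowledge, the decisive step --- the exhaustive case analysis showing that no non-trivial Schur ring can fail to have a proper non-trivial $\mathcal{S}$-subgroup --- is precisely the content of \cite{Eno69,KE68}, so your sketch is a roadmap to their proof rather than an independent one.

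One caution on your induction sub-strategy: you propose to lift imprimitivity from the quotient $G/\langle\rho^n\rangle$ by invoking the absence of non-trivial primitive Schur rings over $D_n$ (via \cite{MP07}) and over $\mathbb{Z}_n\oplus\mathbb{Z}_2$ (via Lemma~\ref{lem::2.6}). But those references classify distance-regular Cayley graphs, not Schur rings; in this paper the implication runs the other way, through Lemma~\ref{lem::2.7}. The B-group property of dihedral $2$-groups and of $\mathbb{Z}_{2^r}\oplus\mathbb{Z}_2$ is indeed known, but it must be sourced independently (e.g.\ from Wielandt or from the Schur-ring classifications underlying those papers), not read off from the distance-regular classifications stated here. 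Moreover, even granting that, the induction does not start until you have shown $\langle\rho^n\rangle$ is an $\mathcal{S}$-subgroup, and that is exactly the ``main obstacle'' you flag --- so the induction does not actually shorten the work of \cite{Eno69,KE68}.
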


If $X$ is a primitive distance-regular Cayley graph over $\SD$ (resp. $\PSD$)  with $n=2^r >4$, then its distance module would be a primitive Schur ring over $\SD$ (resp. $\PSD$) by Lemma \ref{lem::2.7} (i), and hence must be  the trivial Schur ring  by Lemma \ref{lem::2.8}. Therefore, by Lemma \ref{lem::2.7} (ii), we obtain the following result.
	
\begin{corollary}\label{cor::2.1}
		Let $X$ be a distance-regular  semi-dihedrant or a distance-regular pseudo-semi-dihedrant of order $4n$ with $n=2^r>4$. If $X$ is primitive, then $X$ is isomorphic to the complete graph $K_{4n}$.
\end{corollary}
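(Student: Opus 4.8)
The plan is to combine the algebraic characterization of primitive distance-regular Cayley graphs via Schur rings (Lemma \ref{lem::2.7}) with the classification of primitive Schur rings over these two families of $2$-groups (Lemma \ref{lem::2.8}); in fact Corollary \ref{cor::2.1} is a direct corollary of those two results, as already hinted in the paragraph preceding it. Suppose $X=\Cay(G,S)$ is a primitive distance-regular graph, where $G=\SD$ or $G=\PSD$ with $n=2^r>4$, and let $\mathcal{D}=\mathcal{D}_{\mathbb{Z}}(G,S)$ denote its distance module, spanned by $\underline{\mathcal{N}_0},\underline{\mathcal{N}_1},\ldots,\underline{\mathcal{N}_d}$.

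First I would invoke Lemma \ref{lem::2.7}(i): since $X$ is a primitive distance-regular Cayley graph over $G$, its distance module $\mathcal{D}$ is a primitive Schur ring over $G$. Next I would apply Lemma \ref{lem::2.8}, which says that for $n=2^r>4$ there are no non-trivial primitive Schur rings over $\SD$ or over $\PSD$; hence $\mathcal{D}$ is forced to be the trivial Schur ring, i.e.\ the $\mathbb{Z}$-module spanned by $\underline{\{1\}}$ and $\underline{G\setminus\{1\}}$. Finally, Lemma \ref{lem::2.7}(ii) states that $\mathcal{D}$ being the trivial Schur ring is equivalent to $X$ being isomorphic to the complete graph on $|G|$ vertices; since $|G|=4n$, we conclude $X\cong K_{4n}$.

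There is essentially no obstacle here: the argument is a three-line chain through the cited lemmas, and the only points worth making explicit are bookkeeping ones. One should note that ``primitive'' in the distance-regular sense (every distance graph $X_i$ connected) is precisely the hypothesis feeding into Lemma \ref{lem::2.7}(i), and that the triviality of $\mathcal{D}$ means $X$ has diameter $1$ so that, on its $4n$ vertices, it is exactly $K_{4n}$. Thus the whole content of the corollary is the reduction performed in the sentence immediately before its statement, and nothing beyond the two imported results is required.
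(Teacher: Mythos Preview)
Your proposal is correct and follows exactly the same three-step chain as the paper: Lemma~\ref{lem::2.7}(i) gives that the distance module is a primitive Schur ring, Lemma~\ref{lem::2.8} forces it to be trivial, and Lemma~\ref{lem::2.7}(ii) then yields $X\cong K_{4n}$. This is precisely the argument sketched in the paragraph preceding the corollary.
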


\begin{lemma}[{\cite[Lemma 2.2]{MP07}}] \label{lem::2.9}
		Let $X=\Cay(G,S)$ denote a Cayley graph with the group $G$ acting regularly on the vertex set of $X$ by left multiplication. Suppose there exists an imprimitivity system $\mathcal{B}$ for $G$. Then the block $B\in\mathcal{B}$ containing the identity  $1\in G$ is a subgroup in $G$. Moreover,
		\begin{enumerate}[(i)]\setlength{\itemsep}{0pt}
			\item if $B$ is normal in $G$, then $X_\mathcal{B}=\Cay(G/B,S/B)$, where $S/B=\{sB\mid s\in S\setminus B\}$;
			\item if there exists an abelian subgroup $A$ in $G$ such that $G = AB$, then $X_\mathcal{B}$ is isomorphic to a
			Cayley graph on the group $A/(A\cap B)$.
		\end{enumerate}
	\end{lemma}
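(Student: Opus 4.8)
The plan is to derive everything from two elementary facts: distinct blocks of an imprimitivity system are disjoint, and for a Cayley graph $X=\Cay(G,S)$ every left multiplication $\ell_g\colon h\mapsto gh$ is an automorphism of $X$ (since $(g')^{-1}h'\in S\iff(gg')^{-1}(gh')\in S$), and these $\ell_g$ are exactly the regular action of $G$, so they permute the blocks of $\mathcal{B}$. First I would prove $B$ is a subgroup: for $b\in B$ the map $\ell_{b^{-1}}$ sends $B$ to some block, and that block contains $\ell_{b^{-1}}(b)=1\in B$, hence equals $B$; thus $b^{-1}B=B$ for all $b\in B$. Then for $b_1,b_2\in B$ one gets $b_1^{-1}b_2\in b_1^{-1}B=B$ and, taking $b_2=1$, $b_1^{-1}\in B$, so $B$ is closed under inverses and products, i.e. $B\le G$. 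The same translation argument shows that for arbitrary $g\in G$ the block containing $g$ is $\ell_g(B)=gB$, so $\mathcal{B}$ is precisely the set of left cosets of $B$ in $G$ — and this part uses nothing about normality.

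For (i), assume $B\trianglelefteq G$, let $\pi\colon G\to G/B$ be the canonical epimorphism, and identify the vertex set $\mathcal{B}$ of $X_\mathcal{B}$ with $G/B$. I claim that distinct cosets $gB,hB$ are adjacent in $X_\mathcal{B}$ if and only if $(gB)^{-1}(hB)\in S/B$. For the forward direction, an edge between $gB$ and $hB$ gives $g'\in gB$, $h'\in hB$ with $(g')^{-1}h'\in S$; applying $\pi$ yields $(gB)^{-1}(hB)=\pi(g')^{-1}\pi(h')=\pi\big((g')^{-1}h'\big)\in\{sB:s\in S\}$, and since $gB\ne hB$ this is not the identity coset, hence lies in $S/B$. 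Conversely, if $(gB)^{-1}(hB)=sB$ with $s\in S\setminus B$, put $g'=g$ and $h'=gs$; then $h'B=g(sB)=g(g^{-1}hB)=hB$, so $h'\in hB$ and $(g')^{-1}h'=s\in S$, which is an edge. Since $S=S^{-1}$, $1\notin S$, and $B$ is a subgroup, $S/B$ is inverse-closed and avoids the identity, so the above identifies $X_\mathcal{B}$ with $\Cay(G/B,S/B)$.

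For (ii), assume $G=AB$ with $A\le G$ abelian. Every coset of $B$ equals $abB=aB$ for some $a\in A$, so $a\mapsto aB$ maps $A$ onto $\mathcal{B}$, and $aB=a'B$ iff $a^{-1}a'\in A\cap B$; hence there is a bijection $A/(A\cap B)\to\mathcal{B}$ (note $A\cap B\trianglelefteq A$ as $A$ is abelian). Each $\ell_c$ with $c\in A$ is an automorphism of $X$ and therefore induces an automorphism of $X_\mathcal{B}$ sending $aB$ to $caB$; using that $A$ is abelian one checks that this depends only on the coset $c(A\cap B)$, and that the resulting action of $A/(A\cap B)$ on $\mathcal{B}$ is transitive (send $aB$ to $a'B$ via $c=a'a^{-1}$) and free (if $caB=aB$ then $a^{-1}ca=c\in A\cap B$). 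So $A/(A\cap B)$ acts regularly on $X_\mathcal{B}$ by automorphisms, and a graph admitting a regular group of automorphisms is a Cayley graph over that group (Sabidussi); thus $X_\mathcal{B}$ is isomorphic to a Cayley graph on $A/(A\cap B)$.

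The step I expect to be the most delicate is part (ii), precisely because $B$ need not be normal in $G$, so there is no quotient group $G/B$ to work in; the whole argument hinges on substituting $A/(A\cap B)$ for $G/B$ and exploiting commutativity of $A$ both to make the induced action well defined and to identify the stabiliser of a block (namely $A\cap B$). In part (i) the only mild subtlety is the converse of the adjacency equivalence: one must manufacture an honest edge of $X$ from the mere coset identity $(gB)^{-1}(hB)=sB$, which is handled by the explicit choice of representatives $g'=g$, $h'=gs$.
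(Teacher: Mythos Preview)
Your proof is correct. The paper does not supply its own argument for this lemma; it merely quotes the result from \cite{MP07}, so there is no in-paper proof to compare against. Your approach---identifying the blocks as left cosets of $B$, handling (i) via the canonical projection, and handling (ii) by showing $A/(A\cap B)$ acts regularly on $\mathcal{B}$ and invoking Sabidussi---is the standard one and matches what one finds in the cited source.
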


\begin{lemma}[\cite{MP07}]\label{lem::2.10}
Let $G$ be a group of order $2n$ and $S$ a subset of $G$. Then the following statements are equivalent:
\begin{enumerate}[(i)]
    \item $S\subseteq G\setminus\{1\}$, $S=S^{(-1)}$ and $\Cay(G,S)$ is a bipartite non-trivial distance-regular graph with diamter $3$ and intersection array $\{k,k-1,k-\mu;1,\mu,k\}$;
    \item there is a subgroup $H$ of index $2$ in $G$ such that for every $a\in G\setminus H$, the set $D=a^{-1}S$ is a non-trivial $(n,k,\mu)$-difference set in $H$ satisfying $D^{(-1)}=aDa$.
    \item there are a subgroup $H$ of index $2$ in $G$ and an element $a\in G\setminus H$ such that the set $S=a^{-1}S$ is a non-trivial $(n,k,\mu)$-difference set in $H$ satisfying $D^{(-1)}=aDa$.
\end{enumerate}
Moreover, if (i)--(iii) hold, then $H\setminus \{1\}$ is exactly the set of vertices of $\Cay(G,S)$ which are at distance $2$ from the vertex $1$.
\end{lemma}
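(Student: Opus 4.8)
I would prove the equivalence via the cyclic chain (i)$\Rightarrow$(ii)$\Rightarrow$(iii)$\Rightarrow$(i), the implication (ii)$\Rightarrow$(iii) being immediate (just fix a single $a\in G\setminus H$). Throughout write $X=\Cay(G,S)$ and let $\mathcal{N}_i$ denote the set of vertices at distance $i$ from $1$. Since $G$ acts regularly on $V(X)$ by left multiplication, for $g\in\mathcal{N}_k$ the intersection number $p_{i,j}^k$ equals the coefficient of $g$ in $\underline{\mathcal{N}_i}\cdot\underline{\mathcal{N}_j}^{(-1)}\in\mathbb{Z}G$; hence, by Lemma \ref{lem::2.7}, for the distance-regularity of a connected $X$ it is equivalent to show that the distance module $\mathcal{D}_\mathbb{Z}(G,S)$ is closed under multiplication. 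The two nontrivial implications use this in opposite directions.

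For (i)$\Rightarrow$(ii): the (unique) bipartition of the connected bipartite graph $X$ is fixed setwise by every automorphism, hence is an imprimitivity system for $G$, so by Lemma \ref{lem::2.9} the part $H$ containing $1$ is a subgroup, necessarily of index $2$; thus $H\trianglelefteq G$ and $a^{2}\in H$ for every $a\in G\setminus H$. Since $S=\mathcal{N}_1$ joins the two parts, $S\subseteq G\setminus H$, while $\mathcal{N}_0,\mathcal{N}_2\subseteq H$; as $\mathrm{diam}(X)=3$, this forces $H=\mathcal{N}_0\cup\mathcal{N}_2$, so $\mathcal{N}_2=H\setminus\{1\}$, $|H|=n$ and $|S|=k$. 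Fix $a\in G\setminus H$ and put $D=a^{-1}S\subseteq H$. For $h\in H\setminus\{1\}$, the substitution $d_i=a^{-1}s_i$ turns $d_1d_2^{-1}=h$ into $s_1s_2^{-1}=aha^{-1}$, so the number of representations of $h$ by $D$ equals $|S\cap(aha^{-1})S|=p_{1,1}^{2}=c_2=\mu$, because $aha^{-1}\in H\setminus\{1\}=\mathcal{N}_2$. Hence $D$ is an $(n,k,\mu)$-difference set, non-trivial because a non-trivial distance-regular graph of diameter $3$ has $2\le k\le n-2$. Finally $S=S^{(-1)}$ gives $D^{(-1)}=(a^{-1}S)^{(-1)}=S^{(-1)}a=Sa=aDa$; as none of this depended on the choice of $a$, statement (ii) follows.

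For (iii)$\Rightarrow$(i): with $D,H,a$ as in (iii), put $S=aD\subseteq aH=G\setminus H$ (so $1\notin S$); from $D^{(-1)}=aDa$ we get $S^{(-1)}=(aD)^{(-1)}=D^{(-1)}a^{-1}=aDaa^{-1}=S$, so $X$ is a genuine Cayley graph. Using the difference-set identity $\underline{D}\,\underline{D}^{(-1)}=(k-\mu)\underline{1}+\mu\underline{H}$ together with $\underline{D}^{(-1)}=a\underline{D}a$, short manipulations in $\mathbb{Z}G$ (invoking $H\trianglelefteq G$ and $a^{2}\in H$) give
\[
\underline{S}^{2}=(k-\mu)\underline{1}+\mu\underline{H},\qquad \underline{S}\,\underline{H}=k\,\underline{aH},\qquad \underline{S}^{3}=(k-\mu)\underline{S}+\mu k\,\underline{aH}.
\]
Together with $\underline{H}^{2}=n\underline{H}$, $\underline{H}\,\underline{aH}=n\underline{aH}$, $\underline{aH}=\underline{G}-\underline{H}$ and $g\underline{G}=\underline{G}$, this shows that the $\mathbb{Z}$-span $M$ of $\{\underline{1},\underline{S},\underline{H},\underline{G}\}$ is a subalgebra of $\mathbb{Z}G$. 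Reading off $\underline{S}^{2}$ and $\underline{S}^{3}$ shows that $X$ is connected of diameter exactly $3$ with $\mathcal{N}_0=\{1\}$, $\mathcal{N}_1=S$, $\mathcal{N}_2=H\setminus\{1\}$ and $\mathcal{N}_3=aH\setminus S$ (the last nonempty since $k\le n-2$); hence $\mathcal{D}_\mathbb{Z}(G,S)=M$ is multiplicatively closed and $\{\mathcal{N}_0,\mathcal{N}_1,\mathcal{N}_2,\mathcal{N}_3\}$ is the simple basis of a Schur ring, so $X$ is distance-regular by Lemma \ref{lem::2.7}. Extracting the structure constants gives $c_1=1$, $a_i=0$, $c_2=\mu$, $c_3=k$, i.e. intersection array $\{k,k-1,k-\mu;1,\mu,k\}$; non-triviality of $D$ forces $2\le k\le n-2$, so $X$ is a non-trivial distance-regular graph. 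In both directions $\mathcal{N}_2=H\setminus\{1\}$, which is the ``moreover'' clause.

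The only step that is not purely formal is (iii)$\Rightarrow$(i): one must verify simultaneously that the product table of $\{\underline{1},\underline{S},\underline{H},\underline{G}\}$ closes up \emph{and} that the distance partition is exactly $\{\{1\},\,S,\,H\setminus\{1\},\,aH\setminus S\}$ --- in particular that the diameter is neither $2$ nor larger than $3$ --- and it is precisely there that the difference-set hypothesis and the symmetry condition $D^{(-1)}=aDa$ are actually used. Once the index-$2$ subgroup $H$ has been produced from the bipartition, the remaining implications are routine.
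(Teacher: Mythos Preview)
The paper does not supply its own proof of Lemma~\ref{lem::2.10}; the result is simply quoted from \cite{MP07}. So there is nothing to compare your argument against within this paper.

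That said, your proposal is correct and is essentially the argument one finds in \cite{MP07}. The chain (i)$\Rightarrow$(ii)$\Rightarrow$(iii)$\Rightarrow$(i) is the natural one, and your use of Lemma~\ref{lem::2.9} to produce the index-$2$ subgroup $H$ from the bipartition, followed by the translation of the common-neighbour count $p_{1,1}^2=\mu$ into the difference-set identity for $D=a^{-1}S$, is exactly right. For (iii)$\Rightarrow$(i) your group-algebra computation $\underline{S}^{2}=(k-\mu)\underline{1}+\mu\underline{H}$, $\underline{S}\,\underline{H}=k\,\underline{aH}$, $\underline{S}^{3}=(k-\mu)\underline{S}+\mu k\,\underline{aH}$ is correct, and the appeal to Lemma~\ref{lem::2.7} cleanly identifies the distance module with the Schur ring spanned by $\underline{1},\underline{S},\underline{H\setminus\{1\}},\underline{aH\setminus S}$. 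The only small point worth making explicit is that non-triviality of $D$ gives $2\le k\le n-2$ and hence $\mu\ge1$, so that every element of $H\setminus\{1\}$ genuinely appears in $\underline{S}^{2}$ and every element of $aH$ in $\underline{S}^{3}$; you mention this but it is the place where connectedness and diameter exactly $3$ are pinned down.
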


Let $H$ be a finite group, and let $N$ be a proper subgroup of $H$ such that $|N|=r$ and $[H:N]=m$. Then a $k$-subset $D$ of $H$ is an $(m,r,k,\mu)$-\textit{relative difference set} relative to $N$ (the `forbidden' subgroup) if and only if $\underline{D}\cdot \underline{D}^{(-1)}=k\cdot1+\mu\cdot \underline{H\setminus N}$. Moreover, we say that $D$ is symmetric whenever $D^{-1}$ is also a relative difference set (possibly with a different forbidden subgroup).  In particular, if  $N$ is normal, then $D$ is symmetric.

\begin{lemma}[{\cite[Proposition 10]{DJ21}}]\label{lem::2.11}
	Let $G$ be a group of order $2r^2\mu$ and $S$ a subset of $G$. Then the following statements are equivalent:
	\begin{enumerate}[(i)]
		\item $S\subseteq G\setminus\{1\}$, $S=S^{(-1)}$ and $\Cay(G,S)$ is an antipodal bipartite distance-regular graph with diameter $4$ and intersection array $\{r\mu,r\mu-1,(r-1)\mu,1;1,\mu,r\mu-1,r\mu\}$;
		\item there is a subgroup $H$ of index $2$ in $G$ and a subgroup $N$ of $H$ of order $r$ such that for every $a\in G\setminus H$, the set $D=a^{-1}S$ is a symmetric $(r\mu,r,r\mu,\mu)$-relative  difference set relative to $N$ in $H$ satisfying $D^{(-1)}=aDa$;
		\item there is a subgroup $H$ of index $2$ in $G$, a subgroup $N$ of $H$ of order $r$ and an element $a\in G\setminus H$ such that the set $D=a^{-1}S$ is a symmetric $(r\mu,r,r\mu,\mu)$-relative  difference set relative to $N$ in $H$ satisfying $D^{(-1)}=aDa$.
	\end{enumerate}
Moreover, if (i)--(iii) hold, then $H\setminus \{1\}$ is exactly the set of vertices of $\Cay(G,S)$ which are at distance $2$ or $4$ from the vertex $1$, and $N\setminus \{1\}$ is exactly the set of vertices of $\Cay(G,S)$ which are at distance $4$ from the vertex $1$.
\end{lemma}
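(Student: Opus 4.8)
The plan is to treat this as the diameter-$4$ antipodal analogue of Lemma~\ref{lem::2.10} and to prove the chain $(i)\Rightarrow(ii)\Rightarrow(iii)\Rightarrow(i)$, working throughout inside the group algebra $\mathbb{Z}G$ and using the convention $h\sim h'\iff h^{-1}h'\in S$ in $\Cay(G,S)$. Observe first that the intersection array in $(i)$ forces, via Lemma~\ref{lem::2.3}(ii), the valency $k=r\mu$, the order $|G|=2r^2\mu$ (so $|H|=r^2\mu$), and covering index $r$; a short computation of the $k_i$ also gives $k_4=r-1$, so the antipodal class of a vertex has exactly $r$ elements.

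For $(i)\Rightarrow(ii)$ I would first extract the two subgroups. Since $X=\Cay(G,S)$ is connected and bipartite its bipartition is unique, hence preserved by $\mathrm{Aut}(X)$ and in particular by the regular left-multiplication action of $G$; thus the bipartition is an imprimitivity system, and by Lemma~\ref{lem::2.9} the part $H$ containing $1$ — the set of vertices at even distance from $1$ — is a subgroup of index $2$ with $S\subseteq G\setminus H$. Likewise, since $X$ is antipodal of diameter $4$, the antipodal partition is $G$-invariant, so by Lemma~\ref{lem::2.9} the antipodal class $N$ of $1$ is a subgroup; by the parameter count $N=\{1\}\cup N_4(1)$ has order $r$, and $N\subseteq H$ because distance-$4$ vertices lie at even distance from $1$. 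The computational heart is the identity
\[
\underline{S}\cdot\underline{S}^{(-1)}=\underline{S}^{\,2}=r\mu\cdot 1+\mu\cdot\underline{H\setminus N},
\]
which I would obtain by noting that the coefficient of $g$ in $\underline{S}^{2}$ is the number of common neighbours of $1$ and $g$: this equals $k=r\mu$ for $g=1$, equals $p^{2}_{1,1}=c_{2}=\mu$ for $g\in N_2(1)=H\setminus N$, and vanishes for $g\in N_4(1)$ and for $g\notin H$ (using that $X$ is bipartite, so $a_2=0$). Now fix any $a\in G\setminus H$ and put $D=a^{-1}S\subseteq H$, a set of size $r\mu$; then $\underline{D}\cdot\underline{D}^{(-1)}=a^{-1}\underline{S}^{2}a=r\mu\cdot1+\mu\cdot\underline{H\setminus a^{-1}Na}$, so $D$ is an $(r\mu,r,r\mu,\mu)$-relative difference set in $H$ with forbidden subgroup a conjugate of $N$; moreover $\underline{D}^{(-1)}\cdot\underline{D}=\underline{S}\cdot\underline{S}=r\mu\cdot1+\mu\cdot\underline{H\setminus N}$ shows that $D^{(-1)}$ is itself such a relative difference set (relative to $N$), so $D$ is symmetric, while $D^{(-1)}=S^{(-1)}a=Sa=aDa$ follows at once from $S=S^{(-1)}$. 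It remains to see that the forbidden subgroup may be taken to be $N$ itself, uniformly in $a$; since the antipodal classes are exactly the left cosets $gN$ and they form a $G$-invariant partition, this reduces to $N\trianglelefteq G$, which one deduces from the $G$-invariance of the antipodal relation together with $S=S^{(-1)}$.

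The implication $(ii)\Rightarrow(iii)$ is immediate, as $G\setminus H$ is non-empty. For $(iii)\Rightarrow(i)$: from $D^{(-1)}=aDa$ one recovers $S=aD$ and $S^{(-1)}=D^{(-1)}a^{-1}=aDa\cdot a^{-1}=aD=S$, so $S=S^{(-1)}$, $1\notin S$ (since $S=aD\subseteq aH$), $|S|=r\mu$, and $\Cay(G,S)$ is bipartite with parts $H$ and $aH$. Using symmetry of $D$ one computes $\underline{S}^{2}=a\underline{D}a\underline{D}=\underline{D}^{(-1)}\underline{D}=r\mu\cdot1+\mu\cdot\underline{H\setminus N}$, whence the vertices at distance $2$ from $1$ are precisely $H\setminus N$, those at distance $4$ are precisely $N\setminus\{1\}$ (they have no common neighbour with $1$, hence lie at distance $\ge4$, and $|N\setminus\{1\}|=r-1>0$ since $r\ge2$), and those at distance $3$ form $aH\setminus S\neq\emptyset$, so the diameter is $4$. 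Finally, using the further products $\underline{S}\cdot\underline{N}$, $\underline{S}\cdot\underline{H}$ and $\underline{S}\cdot\underline{S}^{2}$ together with the subgroup relations $N\le H\le G$, I would check that the distance partition $\{\{1\},N_1(1),\dots,N_4(1)\}$ of $\Cay(G,S)$ from $1$ is equitable with quotient matrix exactly the stated intersection array; vertex-transitivity then promotes this to distance-regularity, and the ``Moreover'' clause is read off as $H\setminus\{1\}=N_2(1)\cup N_4(1)$ and $N\setminus\{1\}=N_4(1)$.

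I expect the main obstacle to be twofold. The conceptual point is pinning the forbidden subgroup of the relative difference set to the antipodal class $N$ uniformly over $a\in G\setminus H$ (in essence, the conjugation-invariance, i.e.\ normality, of $N$), where one must be careful precisely because $G$ need not be abelian — this is exactly what the notion of a \emph{symmetric} relative difference set and the twisting condition $D^{(-1)}=aDa$ are designed to absorb. The more laborious point is the converse $(iii)\Rightarrow(i)$: converting the single quadratic identity $\underline{D}\cdot\underline{D}^{(-1)}=r\mu\cdot1+\mu\cdot\underline{H\setminus N}$ into the full distance structure and all four entries of the intersection array requires a careful and genuinely non-commutative bookkeeping of products in $\mathbb{Z}G$, and that is where essentially all of the remaining work lies.
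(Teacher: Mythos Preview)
The paper does not prove Lemma~\ref{lem::2.11}; it is quoted verbatim from \cite[Proposition 10]{DJ21} and used as a black box, so there is no in-paper argument to compare your proposal against.

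For what it is worth, your outline follows exactly the natural strategy (and is the diameter-$4$ analogue of how Lemma~\ref{lem::2.10} is handled in \cite{MP07}): read off $H$ and $N$ from the bipartition and the antipodal class of $1$ via Lemma~\ref{lem::2.9}, translate the intersection numbers into the group-algebra identity $\underline{S}^{\,2}=r\mu\cdot 1+\mu\cdot\underline{H\setminus N}$, and conjugate by $a^{-1}$ to obtain the relative-difference-set equation for $D=a^{-1}S$. You have also correctly isolated the one genuinely delicate point: your computation gives forbidden subgroup $a^{-1}Na$ rather than $N$, so to match the wording of (ii) literally one needs $N\trianglelefteq G$, and the sentence ``one deduces \ldots\ from the $G$-invariance of the antipodal relation together with $S=S^{(-1)}$'' is the step that would require the most care to make rigorous (left-regularity only gives that the antipodal classes are the \emph{left} cosets of $N$). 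This is precisely the issue the paper's definition of ``symmetric'' relative difference set (allowing a possibly different forbidden subgroup for $D^{-1}$) is set up to accommodate; your second product $\underline{D}^{(-1)}\underline{D}=\underline{S}^{\,2}$ already shows $D^{-1}$ is an RDS relative to $N$ itself, which is what ``symmetric'' requires here.
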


\begin{lemma}[{\cite[Theorem 2.1]{Hir03}}]\label{lem::2.12}
If a group $G$ of order $4n$ contains a $(2n,2,2n,n)$-relative difference set, then a Sylow
$2$-subgroup of $G$ is non-cyclic and $n$ is even unless $n=1$.
\end{lemma}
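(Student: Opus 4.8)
The plan is to split the statement into its two conclusions and reduce each to a group-ring identity, with the conclusion on $n$ coming from a Hadamard-matrix obstruction and the one on the Sylow $2$-subgroup from a $2$-adic analysis of cyclotomic integers. Write $|G|=4n$, let $N=\{1,z\}$ be the forbidden subgroup, and let $D$ be a $(2n,2,2n,n)$-relative difference set, so that $|D|=2n$ and $\underline D\,\underline D^{(-1)}=2n\cdot 1+n(\underline G-\underline N)$. Since $z$ has no representation as $d_1d_2^{-1}$ with $d_1,d_2\in D$, the set $D$ meets each coset of $N$ in at most one point; as $|D|=[G:N]$, it meets each coset in exactly one point, so $D$ (and every translate $Dg$) is a transversal of the coset partition of $G$ by $N$. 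It is classical that a relative difference set whose forbidden subgroup has order $2$ yields a Hadamard matrix of order equal to the index of that subgroup: encoding each translate $Dg$ as a $\pm1$-vector of length $2n$, the difference-set identity (and its dual) forces any two such vectors to be orthogonal or complementary, and after discarding complementary copies one is left with $2n$ pairwise orthogonal $\pm1$-vectors in dimension $2n$ — a Hadamard matrix of order $2n$ (cf.\ also Lemma~\ref{lem::2.11}). Since a Hadamard matrix of order $m$ exists only if $m\in\{1,2\}$ or $4\mid m$, we conclude $2n\in\{1,2\}$ or $4\mid 2n$, i.e.\ $n=1$ or $n$ is even.

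For the statement on the Sylow $2$-subgroup, suppose for contradiction that some $P\in\mathrm{Syl}_2(G)$ is cyclic and $n>1$; then, by the first part, $n$ is even, so $|G|=4n=2^s w$ with $w$ odd and $s\ge 3$, and $|P|=2^s$. A finite group with cyclic Sylow $2$-subgroup has a normal $2$-complement (a transfer argument: a generator of $P$ acts on $G$ as an odd permutation), so $G=K\rtimes P$ with $|K|=w$ and $G/K\cong P\cong\mathbb{Z}_{2^s}$, and since $|N|=2$ and $|K|$ is odd the image $\bar z$ of $z$ in $G/K$ is the unique involution of $P$. Applying the epimorphism $G\to P$ to the defining identity yields, in $\mathbb{Z}[P]$,
\[
\overline{\underline D}\;\overline{\underline D}^{(-1)}=2n\cdot 1+n\bigl(w\,\underline P-\underline{\bar N}\bigr),\qquad \overline{\underline D}=\sum_{p\in P}m_p\,p,\quad m_p\in\mathbb{Z}_{\ge 0},\quad\sum_p m_p=2n,
\]
where $\bar N=\langle\bar z\rangle$; in particular $\sum_p m_p^2=n(w+1)$. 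Evaluating a faithful character $\chi$ of $P$ annihilates $\underline P$ and $\underline{\bar N}$, so $\alpha:=\chi(\overline{\underline D})\in\mathbb{Z}[\zeta_{2^s}]$ satisfies $\alpha\overline{\alpha}=2n=2^{s-1}w$, while $\alpha$ is a nonnegative integer combination of $2^s$-th roots of unity whose coefficients sum to $2^{s-1}w$. Because $2$ is totally ramified in $\mathbb{Q}(\zeta_{2^s})$ — with a unique prime $\mathfrak p$ above it, $v_{\mathfrak p}(2)=2^{s-1}$, and $\mathfrak p$ fixed by complex conjugation — the relation $\alpha\overline{\alpha}=2^{s-1}w$ forces $v_{\mathfrak p}(\alpha)=(s-1)2^{s-2}\ge 2$. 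I would then contradict this by playing the large $\mathfrak p$-adic divisibility of $\alpha$ against the combinatorial constraints on it (nonnegativity of the $m_p$, the prescribed sum $2n$, and the value $\sum_p m_p^2=n(w+1)$ coming from the full $\mathbb{Z}[P]$-identity), via a Turyn/Ma-type estimate for cyclotomic integers of $2$-power conductor. Hence no Sylow $2$-subgroup of $G$ can be cyclic.

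The main obstacle is precisely this last step. When $G$ is itself a $2$-group (so $w=1$ and $\overline{\underline D}$ is the indicator of a $2n$-element subset of $P\cong\mathbb{Z}_{4n}$), a short argument with a single faithful character already finishes it: no sum of $2n$ distinct $(4n)$-th roots of unity has squared modulus $2n$ once $n\ge 2$ (for $n=1$ such sums do exist, which is why $\mathbb{Z}_4$ carries such a relative difference set). For general $G$, however, $\overline{\underline D}$ is only a multiset, so one must genuinely use the whole group-ring identity over $P$, or else invoke the known nonexistence of $(m,2,m,m/2)$-relative difference sets — equivalently, of cocyclic Hadamard matrices — in groups with cyclic Sylow $2$-subgroup. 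Everything else — the transversal property, the passage to a Hadamard matrix of order $2n$, the normal-$2$-complement reduction, and the character bookkeeping — is routine.
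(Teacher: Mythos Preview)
The paper does not prove this lemma at all; it is quoted verbatim as Theorem~2.1 of Hiramine~\cite{Hir03} and used as a black box in Lemmas~\ref{lem::3.8} and~\ref{lem::3.9}. So there is no in-paper argument to compare your proposal against.

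That said, your proposal is genuinely incomplete, and you identify the gap yourself. The reduction in the second part is sound: a group with cyclic Sylow $2$-subgroup has a normal $2$-complement, so one may push the defining identity down to $\mathbb{Z}[P]$ with $P\cong\mathbb{Z}_{2^s}$, and the character bookkeeping you describe (in particular $|\alpha|^2=2^{s-1}w$ for every faithful $\chi$, and $\chi(\overline{\underline D})=0$ for every nontrivial non-faithful $\chi$) is correct. But you stop at ``a Turyn/Ma-type estimate'' without saying which one or how the combinatorial constraints on the $m_p$ close the argument. This is not a formality: the problem you have reduced to is essentially the nonexistence of a $(2^{s-1},2,2^{s-1},2^{s-2})$-relative difference set (with multiplicities bounded by $w$) in a cyclic $2$-group, and deriving a contradiction from $v_{\mathfrak p}(\alpha)=(s-1)2^{s-2}$ together with $\sum_p m_p^2=n(w+1)$ requires a real argument that you have not supplied. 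Your parenthetical claim that the $w=1$ case is ``short'' is also optimistic: in that case you are asserting that no negacyclic Hadamard matrix of $2$-power order $2^{s-1}\ge 4$ exists, which is true but not a one-line consequence of a single character value.

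Your first part, deducing that $n$ is even (or $n=1$) via the classical passage from a $(2n,2,2n,n)$-relative difference set to a Hadamard matrix of order $2n$, is essentially correct, though the construction you sketch (encoding translates $Dg$ as $\pm1$-vectors) would benefit from being written out explicitly, and the cross-reference to Lemma~\ref{lem::2.11} is not really to the point.
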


For a positive integer $n$, let $\mathbb{Z}_n^*$ be the unit group of the ring $\mathbb{Z}_n$, and $\omega$ a fixed primitive $n$-th root of unity. Let $\mathbb{F}=\mathbb{Q}(\omega)$ be the $n$-th cyclotomic field over the rationals. For a subset $A\subseteq \mathbb{Z}_n$, $i\in \mathbb{Z}_n$ and $c\in\mathbb{Z}_n^*$, denote by $cA=\{ca\colon a\in A\}$, $i+A=\{i+a\colon a\in A\}$, and $i-A=i+(-1)A$. Let  $\Delta_A:\mathbb{Z}_n\rightarrow \mathbb{F}$ denote the characteristic function of $A$, that is, $\Delta_A(x)=1$ if $x\in A$, and $\Delta_A(x)=0$ otherwise. Let $\mathbb{F}^{\mathbb{Z}_n}$ be the $\mathbb{F}$-vector space of all functions $f$: $\mathbb{Z}_n\rightarrow \mathbb{F}$ with the scalar multiplication and addition defined pointwise. If we define the multiplication point-wise:
\[(f\cdot g)(z)=f(z)g(z),~f,g\in\mathbb{F}^{\mathbb{Z}_n},\]
then $(\mathbb{F}^{\mathbb{Z}_n},\cdot)$ forms an $\mathbb{F}$-algebra. If we define the multiplication as the \textit{convolution}:
\[(f*g)(z)=\sum_{i\in\mathbb{Z}_n}f(i)g(z-i),~f,g\in\mathbb{F}^{\mathbb{Z}_n},\]
then $(\mathbb{F}^{\mathbb{Z}_n},*)$ also forms an $\mathbb{F}$-algebra. The {\it Fourier transformation}  $\mathcal{F}: (\mathbb{F}^{\mathbb{Z}_n},*)\rightarrow(\mathbb{F}^{\mathbb{Z}_n},\cdot)$ is defined by
\[(\mathcal{F}(f))(z)=\sum_{i\in\mathbb{Z}_n}f(i)\omega^{iz},~f\in\mathbb{F}^{\mathbb{Z}_n}.\]
It is easy to verify that $\mathcal{F}$ is an algebra isomorphism, and obeys the \textit{inversion formula}
\begin{equation}\label{equ::1}
\mathcal{F}(\mathcal{F}(f))(z)=nf(-z).
\end{equation}
Furthermore, for any divisor $r$ of $n$,   we have 
\[\mathcal{F}\Delta_{r\mathbb{Z}_n}=\frac{n}{r}\Delta_{\frac{n}{r}\mathbb{Z}_n}.\]
In particular, 
\begin{equation}\label{equ::2}
\mathcal{F}1=\mathcal{F}\Delta_{\mathbb{Z}_n}=n\mathcal{F}\Delta_0~\textrm{and }~\mathcal{F}\Delta_0=\Delta_{\mathbb{Z}_n}=1.
\end{equation}

Note that $\mathbb{Z}_n^\ast$ acts on $\mathbb{Z}_n$ by multiplication, and each orbit of this action consists of all elements of a given order in the additive group $\mathbb{Z}_n$. Consequently, each orbit is of  the form  $\mathcal{O}_r=\{c\cdot \frac{n}{r}\in \mathbb{Z}_n\mid c\in \mathbb{Z}_n^\ast\}$, where $r$ is a positive divisor of $n$. The following lemmas present  basic facts about Fourier transformation.
	
\begin{lemma}[{\cite[Corollary 3.2]{MP07}}]\label{lem::2.13}
		If $A$ is a subset of $\mathbb{Z}_{n}$ and $\mathrm{Im}(\mathcal{F}\Delta_A)\subseteq\mathbb{Q}$, then $A$ is a union of some orbits of the action of $\mathbb{Z}_n^\ast$ on $\mathbb{Z}_n$ by multiplication, and $\mathrm{Im}(\mathcal{F}\Delta_A)\subseteq\mathbb{Z}$.
	\end{lemma}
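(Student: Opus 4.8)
The plan is to exploit the action of the Galois group $\mathrm{Gal}(\mathbb{Q}(\omega)/\mathbb{Q})$ on the values of $\mathcal{F}\Delta_A$, together with the injectivity of the Fourier transformation. Recall that this Galois group is canonically isomorphic to $\mathbb{Z}_n^\ast$: to each $c\in\mathbb{Z}_n^\ast$ corresponds the automorphism $\sigma_c$ determined by $\sigma_c(\omega)=\omega^c$. First I would record how $\sigma_c$ acts on a single value of the Fourier transform. Since $\mathcal{F}\Delta_A(z)=\sum_{i\in A}\omega^{iz}$ is a $\mathbb{Z}$-linear combination of powers of $\omega$, applying $\sigma_c$ gives $\sigma_c\big(\mathcal{F}\Delta_A(z)\big)=\sum_{i\in A}\omega^{ciz}=\mathcal{F}\Delta_A(cz)$, and after the index substitution $j=ci$ (which permutes $\mathbb{Z}_n$ and carries $A$ onto $cA$) this equals $\mathcal{F}\Delta_{cA}(z)$. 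Hence $\sigma_c\circ\mathcal{F}\Delta_A=\mathcal{F}\Delta_{cA}$ as functions on $\mathbb{Z}_n$, for every $c\in\mathbb{Z}_n^\ast$.

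Next I would feed in the hypothesis $\mathrm{Im}(\mathcal{F}\Delta_A)\subseteq\mathbb{Q}$. Every rational number is fixed by each $\sigma_c$, so $\mathcal{F}\Delta_A(z)=\sigma_c\big(\mathcal{F}\Delta_A(z)\big)=\mathcal{F}\Delta_{cA}(z)$ for all $z\in\mathbb{Z}_n$; that is, $\mathcal{F}\Delta_{cA}=\mathcal{F}\Delta_A$ as elements of $\mathbb{F}^{\mathbb{Z}_n}$. Since $\mathcal{F}$ is an algebra isomorphism, hence injective (this can also be seen directly from the inversion formula \eqref{equ::1}), it follows that $\Delta_{cA}=\Delta_A$, i.e. $cA=A$. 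As $c\in\mathbb{Z}_n^\ast$ was arbitrary, $A$ is invariant under the multiplication action of $\mathbb{Z}_n^\ast$ on $\mathbb{Z}_n$, and therefore is a disjoint union of orbits of that action.

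For the remaining claim, fix $z\in\mathbb{Z}_n$: the value $\mathcal{F}\Delta_A(z)=\sum_{i\in A}\omega^{iz}$ is a sum of roots of unity, hence an algebraic integer; being rational by hypothesis, and $\mathbb{Z}$ being integrally closed in $\mathbb{Q}$, it must lie in $\mathbb{Z}$. Thus $\mathrm{Im}(\mathcal{F}\Delta_A)\subseteq\mathbb{Z}$, which finishes the argument. The proof is essentially routine; the only spot asking for mild care is the bookkeeping behind the identity $\sigma_c\circ\mathcal{F}\Delta_A=\mathcal{F}\Delta_{cA}$ — one must keep straight whether multiplication by $c$ produces $cA$ or $c^{-1}A$ — but since the conclusion quantifies over all units $c\in\mathbb{Z}_n^\ast$, this ambiguity does not affect anything.
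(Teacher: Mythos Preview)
Your argument is correct. The paper does not give its own proof of this lemma; it simply quotes it from \cite[Corollary~3.2]{MP07}, so there is no in-paper proof to compare against. What you have written is precisely the standard Galois-theoretic argument (and is essentially the one used in \cite{MP07}): identify $\mathrm{Gal}(\mathbb{Q}(\omega)/\mathbb{Q})$ with $\mathbb{Z}_n^\ast$, observe $\sigma_c\big(\mathcal{F}\Delta_A(z)\big)=\mathcal{F}\Delta_A(cz)=\mathcal{F}\Delta_{cA}(z)$, use rationality to force $\mathcal{F}\Delta_{cA}=\mathcal{F}\Delta_A$, and invoke injectivity of $\mathcal{F}$ to get $cA=A$; the integrality then follows since rational algebraic integers lie in $\mathbb{Z}$. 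Your closing remark about the harmless $cA$ versus $c^{-1}A$ ambiguity is also accurate.
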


\begin{lemma}[{\cite[Lemma 3.3]{MP07}}]\label{lem::2.14}
 Let $r$ be a  positive divisor  of $n$, and let $\omega$ be a primitive $n$-th root of unity. If $A$ is a subset of $\mathbb{Z}_{n}$, then
$$\mathcal{F}\Delta_{A}\left(\frac{n}{r}\right)=e_0+e_1\xi+\cdots+e_{r-1}\xi^{r-1},$$
where $\xi=\omega^{\frac{n}{r}}$ and $e_i =|A\cap (i+r\mathbb{Z}_n)|$ for $0\leq i\leq r-1$.
\end{lemma}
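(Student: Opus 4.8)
The plan is to evaluate $\mathcal{F}\Delta_A$ at the point $n/r$ directly from the definition of the Fourier transformation, and then to regroup the resulting character sum according to residue classes modulo $r$.

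First I would unwind the definitions. By the definition of $\mathcal{F}$ and of the characteristic function, for any $z\in\mathbb{Z}_n$ we have $\mathcal{F}\Delta_A(z)=\sum_{i\in\mathbb{Z}_n}\Delta_A(i)\,\omega^{iz}=\sum_{a\in A}\omega^{az}$. Specialising to $z=n/r$ and writing $\xi=\omega^{n/r}$ turns this into $\mathcal{F}\Delta_A(n/r)=\sum_{a\in A}\xi^{a}$.

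Next I would use that $r$ divides $n$. This gives $\xi^{r}=\omega^{n}=1$, so $\xi$ is an $r$-th root of unity and the quantity $\xi^{a}$ depends only on the residue of $a$ modulo $r$; moreover the $r$ cosets $i+r\mathbb{Z}_n$ with $0\le i\le r-1$ form a partition of $\mathbb{Z}_n$ (each of size $n/r$). Splitting the sum $\sum_{a\in A}\xi^{a}$ along this partition yields
\[
\sum_{a\in A}\xi^a=\sum_{i=0}^{r-1}\ \sum_{a\in A\cap(i+r\mathbb{Z}_n)}\xi^a=\sum_{i=0}^{r-1}\xi^{i}\,\bigl|A\cap(i+r\mathbb{Z}_n)\bigr|=\sum_{i=0}^{r-1}e_i\,\xi^{i},
\]
which is exactly the asserted identity.

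There is no genuine obstacle here: the statement is a bookkeeping consequence of the definition of the Fourier transformation. The only two points that require a line of justification are that $\xi$ has order dividing $r$ (so that $\xi^{a}$ is well defined as a function of $a\bmod r$) and that $\{\,i+r\mathbb{Z}_n : 0\le i\le r-1\,\}$ partitions $\mathbb{Z}_n$; both follow from the hypothesis $r\mid n$. Everything else is routine manipulation of finite sums.
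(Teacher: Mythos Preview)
Your proof is correct. The paper does not actually give its own proof of this lemma---it is quoted from \cite{MP07} without argument---so there is nothing to compare against; your direct computation from the definition of $\mathcal{F}$, followed by regrouping the character sum along the cosets $i+r\mathbb{Z}_n$, is exactly the standard (and essentially the only) way to verify the identity.
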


\section{Proof of the main results}\label{sec::3}
In this section, we shall prove Theorem \ref{thm::main1} and Theorem \ref{thm::main2}. To achieve this goal, we first present a series of lemmas about distance-regular  semi-dihedrants or   pseudo-semi-dihedrants. 
\begin{lemma}\label{lem::3.1}
Let $\SD=\langle\rho,\tau\mid\rho^{2n}=\tau^2=1,\tau\rho\tau=\rho^{n-1} \rangle$ be the semi-dihedral group of order $4n$ with $n=2^r>4$, and let $B$ be a subgroup of $\langle\rho\rangle$ with $|B|>1$. Then $B$ is normal in $\SD$, and $\SD/B$ is isomorphic to $\mathbb{Z}_2$ or a dihedral group. 
\end{lemma}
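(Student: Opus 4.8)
The plan is to list the subgroups of $\langle\rho\rangle$ explicitly and then settle normality and the quotient structure by direct computation, exploiting that $n=2^r$, so that $n\equiv 0$ and $n-1\equiv -1$ modulo every power $2^k$ with $k\le r$.

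First I would record that $\langle\rho\rangle$ is cyclic of order $2n=2^{r+1}$, so each of its subgroups has the form $B_k:=\langle\rho^{2^k}\rangle$ with $|B_k|=2^{r+1-k}$; the assumption $|B|>1$ forces $0\le k\le r$, and $\rho^j\in B_k$ if and only if $2^k\mid j$. For normality it is enough to check that $\tau$ normalizes $B_k$ (conjugation by powers of $\rho$ fixes $B_k$ since $\langle\rho\rangle$ is abelian), and indeed $\tau\rho^{2^k}\tau^{-1}=(\tau\rho\tau)^{2^k}=\rho^{(n-1)2^k}\in\langle\rho^{2^k}\rangle=B_k$ because $2^k\mid (n-1)2^k$. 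Since $\SD=\langle\rho,\tau\rangle$, this yields $B\trianglelefteq\SD$.

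Next I would compute $\SD/B_k$ from the generators $\bar\rho:=\rho B_k$ and $\bar\tau:=\tau B_k$. If $k=0$ then $B_0=\langle\rho\rangle$, hence $\bar\rho=1$ and $\SD/B_0=\langle\bar\tau\rangle\cong\mathbb{Z}_2$. If $1\le k\le r$, then $\bar\rho$ has order $2n/|B_k|=2^k\ge 2$ and $\bar\tau$ has order $2$ (because $\tau\notin\langle\rho\rangle\supseteq B_k$); moreover $2^k\mid n=2^r$ gives $n-1\equiv-1\pmod{2^k}$, so the defining relation of $\SD$ descends to $\bar\tau\bar\rho\bar\tau=\bar\rho^{\,n-1}=\bar\rho^{-1}$. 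Thus there is a surjection from the dihedral group $D_{2^k}$ of order $2^{k+1}$ onto $\SD/B_k$, and since $|\SD/B_k|=|\SD|/|B_k|=2^{r+2}/2^{r+1-k}=2^{k+1}=|D_{2^k}|$, this surjection is an isomorphism; note that for $k=1$ the group $D_2$ is the Klein four-group, which is still dihedral by convention.

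I do not expect a serious obstacle: the argument is essentially bookkeeping. The two points requiring care are verifying that the order of $\bar\rho$ is exactly $2^k$ (so that $\SD/B_k$ is not a proper quotient of $D_{2^k}$) and treating the extreme subgroup $B=\langle\rho\rangle$ separately, after which the order comparison promotes the natural surjection $D_{2^k}\twoheadrightarrow\SD/B_k$ to an isomorphism.
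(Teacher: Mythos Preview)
Your proof is correct and follows essentially the same approach as the paper: both identify the nontrivial subgroups of $\langle\rho\rangle$ (you via the explicit parametrization $B_k=\langle\rho^{2^k}\rangle$, the paper via the minimal exponent $i_0$, which amounts to the same thing since $2n=2^{r+1}$), check normality by conjugating the generator with $\tau$, and then verify the dihedral relations $\bar\rho^{2^k}=\bar\tau^2=1$, $\bar\tau\bar\rho\bar\tau=\bar\rho^{-1}$ in the quotient using $\rho^n\in B$. Your order-count argument promoting the surjection $D_{2^k}\twoheadrightarrow\SD/B_k$ to an isomorphism is a clean way to finish, equivalent to the paper's explicit coset listing.
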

\begin{proof}
If $\rho^{i}\in B$ for some odd $i$, then $B=\langle\rho\rangle$ because $\gcd(2n,i)=1$. This implies that $B$ is normal in $\SD$ and $\SD/B\cong \mathbb{Z}_2$. Now suppose that $B=\langle\rho^{i}\rangle$ for some even $i\in \mathbb{Z}_{2n}\setminus\{0\}$. Let $i_0$ be the least positive integer such that $\rho^{i_0}\in B$. It is easy to see that  $i_0$ ($i_0\geq 2$) is a factor of $n$, and so $\rho^n\in B=\langle\rho^{i_0}\rangle$. As $\tau \rho^{i_0}\tau=\rho^{(n-1)i_0}=\rho^{-i_0}$, we have $\tau B\tau =B$, and hence $B$ is normal in $\SD$. Also, we see that $\SD/B=\{B,\rho B, \ldots,\rho^{i_0-1}B,\tau B,\rho\tau B, \ldots,\rho^{i_0-1}\tau B\}$. Furthermore, $(\rho B)^{i_0}=(\tau B)^2=B$ and $\tau B\cdot \rho B\cdot \tau B=\tau \rho \tau B=\rho^{n-1} B=\rho^{-1} B=(\rho B)^{-1}$. Therefore, we may conclude that $\SD/B$ is isomorphic to a dihedral group.  
\end{proof}

The case for pseudo-semi-dihedral groups is similar, and we omit the proof.
\begin{lemma}\label{lem::3.2}
Let $\PSD=\langle\rho,\tau\mid\rho^{2n}=\tau^2=1,\tau\rho\tau=\rho^{n+1} \rangle$ be the pseudo-semi-dihedral group of order $4n$ with $n=2^r>4$, and let $B$ be a subgroup of $\langle\rho\rangle$ with $|B|>1$. Then $B$ is normal in $\PSD$, and the quotient group  $\PSD/B\cong\mathbb{Z}_t\oplus\mathbb{Z}_2$ for some $t\mid n$. 
\end{lemma}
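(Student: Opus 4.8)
The plan is to follow the template of the proof of Lemma~\ref{lem::3.1}, exploiting that $\langle\rho\rangle\cong\mathbb{Z}_{2n}$ is a cyclic $2$-group (since $2n=2^{r+1}$), so its subgroups form a single chain. First I would split into two cases. If $\rho^i\in B$ for some odd $i$, then $\gcd(2n,i)=1$ forces $B=\langle\rho\rangle$, which is normal in $\PSD$ (index $2$), and $\PSD/B\cong\mathbb{Z}_2\cong\mathbb{Z}_1\oplus\mathbb{Z}_2$, so the claim holds with $t=1$. Otherwise, let $i_0\ge 2$ be the least positive integer with $\rho^{i_0}\in B$, so that $B=\langle\rho^{i_0}\rangle$ with $i_0\mid 2n$ and $i_0\ne 2n$; since $2n$ is a power of $2$, the number $i_0$ is a power of $2$ with $i_0\le n$, hence $i_0\mid n$, and in particular $\rho^n\in B$.

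Next I would check that $B\trianglelefteq\PSD$: one can either observe that $B$ is the unique subgroup of its order in the cyclic group $\langle\rho\rangle$, hence characteristic in $\langle\rho\rangle\trianglelefteq\PSD$; or compute directly that $\tau\rho^{i_0}\tau=\rho^{(n+1)i_0}=\rho^{i_0}$ (using that $ni_0\equiv 0\pmod{2n}$ because $i_0$ is even), so $\tau B\tau=B$, and then combine this with the commutativity of $\langle\rho\rangle$. Then I would identify the quotient: writing $\bar\rho=\rho B$ and $\bar\tau=\tau B$, a short count shows that $\bar\rho$ has order exactly $i_0$, $\bar\tau$ has order $2$, and $\langle\bar\rho\rangle\cap\langle\bar\tau\rangle=\{\bar 1\}$ because $\tau\notin\langle\rho\rangle$; meanwhile the defining relation descends to $\bar\tau\bar\rho\bar\tau=\overline{\rho^{n+1}}=\bar\rho^{n+1}=\bar\rho$, since $\rho^n\in B$. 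Hence $\bar\rho$ and $\bar\tau$ commute and $\PSD/B=\langle\bar\rho\rangle\times\langle\bar\tau\rangle\cong\mathbb{Z}_{i_0}\oplus\mathbb{Z}_2$; taking $t=i_0\mid n$ finishes the proof.

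I do not expect any genuine obstacle here — the whole argument is a routine adaptation of Lemma~\ref{lem::3.1} — and the only step deserving attention is the reduction $i_0\mid n$ (equivalently $\rho^n\in B$). This is exactly what forces conjugation by $\tau$ to act trivially on $\PSD/B$, turning the pseudo-semi-dihedral relation $\tau\rho\tau=\rho^{n+1}$ into $\bar\tau\bar\rho\bar\tau=\bar\rho$; structurally, this is the reason the quotient is abelian, in contrast with the dihedral quotient obtained for semi-dihedral groups.
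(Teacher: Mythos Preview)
Your proposal is correct and follows exactly the template the paper intends: the authors omit the proof of Lemma~\ref{lem::3.2}, stating only that ``the case for pseudo-semi-dihedral groups is similar'' to Lemma~\ref{lem::3.1}, and your case split, the normality check via $\tau\rho^{i_0}\tau=\rho^{(n+1)i_0}=\rho^{i_0}$, and the identification of the quotient are precisely the adaptation they have in mind. Your remark that the key point is $i_0\mid n$ (so that $\rho^n\in B$ forces $\bar\tau$ and $\bar\rho$ to commute, yielding an abelian quotient rather than a dihedral one) is exactly the structural difference from Lemma~\ref{lem::3.1}.
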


Combining Lemmas \ref{lem::2.1}, \ref{lem::2.9} and \ref{lem::3.1}, we get the following result.
\begin{corollary}\label{cor::3.1}
Let $X=\SDG$ be a distance-regular semi-dihedrant with $n=2^r>4$. Then:
\begin{enumerate}[(i)]
    \item if $X$ is antipodal, then the antipodal quotient $\overline{X}$ is a distance-regular circulant or a distance-regular dihedrant;
    \item if $X$ is bipartite, then the halved graph $\frac{1}{2}X$ is a distance-regular circulant, a distance-regular dihedrant, or a distance-regular dicirculant.
\end{enumerate}
\end{corollary}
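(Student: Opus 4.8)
The plan is to use that the distinguished partition of $X$ — the partition into antipodal classes in case (i), and the bipartition in case (ii) — is a system of imprimitivity for the regular action of $\SD$ on $V(X)=\SD$, so that Lemma~\ref{lem::2.9} identifies the block $B$ containing the identity as a subgroup of $\SD$; one then reads off the isomorphism type of $\SD/B$ (in case (i)) or of $B$ itself (in case (ii)) from the subgroup description in Lemma~\ref{lem::3.1}. A preliminary remark: a distance-regular semi-dihedrant necessarily has valency at least $3$ (a connection set of size at most $2$ generates a cyclic or a dihedral subgroup, whereas $\SD$ is neither cyclic nor dihedral, so such a Cayley graph would be disconnected), so Lemma~\ref{lem::2.1} applies throughout.

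For (i), suppose $X$ is antipodal, its antipodal classes forming a partition $\mathcal{B}^{\ast}$ of covering index $r\ge 2$. Since automorphisms preserve distances, $\mathcal{B}^{\ast}$ is $\mathrm{Aut}(X)$-invariant, hence an imprimitivity system for $\SD$, and by Lemma~\ref{lem::2.9} the class $B$ with $1\in B$ is a subgroup with $|B|=r>1$. If $B\le\langle\rho\rangle$, then Lemma~\ref{lem::3.1} gives $B\trianglelefteq\SD$ with $\SD/B$ isomorphic to $\mathbb{Z}_2$ or a dihedral group, so by Lemma~\ref{lem::2.9}(i) the antipodal quotient $\overline{X}=X_{\mathcal{B}^{\ast}}=\Cay(\SD/B,S/B)$ is a circulant or a dihedrant. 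If instead $B\not\le\langle\rho\rangle$, then $B$ meets the nontrivial coset of the abelian index-$2$ subgroup $\langle\rho\rangle$, so $\SD=\langle\rho\rangle B$, and Lemma~\ref{lem::2.9}(ii) makes $\overline{X}$ a Cayley graph on the cyclic group $\langle\rho\rangle/(\langle\rho\rangle\cap B)$, i.e.\ a circulant. In both cases $\overline{X}$ is distance-regular by Lemma~\ref{lem::2.1}(ii).

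For (ii), suppose $X$ is bipartite with bipartition $\{X_0,X_1\}$, $1\in X_0$. As $X$ is connected, this partition is preserved (as a partition) by every automorphism, hence is an imprimitivity system for $\SD$, so by Lemma~\ref{lem::2.9} the part $X_0$ is a subgroup of index $2$ in $\SD$. The substantive step is to enumerate the index-$2$ subgroups of $\SD$: one computes $\SD'=\langle\rho^2\rangle$, so $\SD/\SD'\cong\mathbb{Z}_2\oplus\mathbb{Z}_2$ has exactly three subgroups of index $2$, whose preimages in $\SD$ are $\langle\rho\rangle\cong\mathbb{Z}_{2n}$, $\langle\rho^2,\tau\rangle\cong D_n$, and $\langle\rho^2,\rho\tau\rangle\cong\mathrm{Dic}_{n/2}$ (the last because $(\rho\tau)^2=\rho^n=(\rho^2)^{n/2}$ and $(\rho\tau)\rho^2(\rho\tau)^{-1}=\rho^{-2}$). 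Thus $X_0$ is cyclic, dihedral, or dicyclic. Since $X_0\le\SD$ acts regularly on the part $X_0$ and acts as automorphisms of the distance-$2$ graph $X_2$, the halved graph induced on $X_0$ (a connected component of $X_2$) is a Cayley graph on $X_0$; moreover, left multiplication by any $a\in X_1$ is an automorphism of $X$ carrying $X_0$ onto $X_1$, so the two halved graphs are isomorphic. Hence $\frac{1}{2}X$ is a circulant, a dihedrant, or a dicirculant, and it is distance-regular by Lemma~\ref{lem::2.1}(i).

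The step I expect to be the main obstacle is the determination of the three index-$2$ subgroups of $\SD$ in case (ii) — in particular the recognition of $\langle\rho^2,\rho\tau\rangle$ as a dicyclic group — since this is precisely what forces dicirculants, and not merely circulants and dihedrants, to appear in the statement. Everything else is a direct combination of Lemmas~\ref{lem::2.1}, \ref{lem::2.9} and \ref{lem::3.1}.
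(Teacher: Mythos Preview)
Your proof is correct and follows essentially the same approach as the paper: both argue that the relevant partition is an imprimitivity system for $\SD$, apply Lemma~\ref{lem::2.9} to identify the block through $1$ as a subgroup, and then invoke Lemma~\ref{lem::3.1} (case (i)) or the list of index-$2$ subgroups of $\SD$ (case (ii)) together with Lemma~\ref{lem::2.1}. Your version is in fact slightly more complete than the paper's, since you explicitly justify $k\ge 3$ (needed for Lemma~\ref{lem::2.1}) and verify that $\langle\rho^2,\rho\tau\rangle\cong\mathrm{Dic}_{n/2}$, both of which the paper leaves implicit.
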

\begin{proof}
First suppose that $X$ is antipodal. Lemma \ref{lem::2.1} (ii) indicates $\overline{X}$ is distance-regular. Assume that $\mathcal{B}$ is the set of antipodal classes of $X$. Clearly, $\mathcal{B}$ is an imprimitive system for $\SD$ acting on the vertex of $X$ by left multiplication. Let $B\in\mathcal{B}$ be the antipodal class containing the identity $1$. By Lemma \ref{lem::2.9}, $B\le \SD$. If $B\le\langle \rho\rangle$, then from Lemma \ref{lem::3.1} we see that $B$ is normal in $\SD$, and $\SD/B$ is isomorphic to $\mathbb{Z}_2$ or a dihedral group. Moreover, by Lemma \ref{lem::2.9} (i), we have $\overline{X}=\Cay(\SD/B,S/B)$, and hence  $\overline{X}$ is a distance-regular circulant or a distance-regular dihedrant. If $B\cap\langle\rho\rangle\tau\ne \emptyset$, then $\SD=\langle \rho\rangle B$, and Lemma \ref{lem::2.9} (ii) indicates that $\overline{X}$ is isomorphic to a Cayley graph over the group $\langle\rho\rangle/(\langle\rho\rangle\cap B)$, which is cyclic. Therefore, $\overline{X}$ is a distance-regular circulant. This proves (i).

Now suppose that $X$ is bipartite. Then Lemma \ref{lem::2.1} (i) indicates $X^+$ and $X^-$ are both distance-regular. Clearly, $\mathcal{B}=\{V(X^+),V(X^-)\}$ is an imprimitive system for $\SD$, and assume $1\in V(X^+)$. Again by Lemma \ref{lem::2.9},  $V(X^+)$ is a subgroup of $\SD$ with index $2$, which  acts regularly on itself by left multiplication as a subgroup of $\mathrm{Aut}(X^+)$. Note that  each subgroup of $\SD$ with index $2$ is  a cyclic group, a dihedral group or a dicyclic group. Therefore, $X^+$ is a distance-regular Cayley graph over these groups. Since $X$ is vertex transitive, we have $X^-\cong X^+$, and  (ii) follows.
\end{proof}	

Similarly, by using  Lemmas \ref{lem::2.1}, \ref{lem::2.9} and \ref{lem::3.2}, we can deduce the following result for pseudo-semi-dihedral groups. 
\begin{corollary}\label{cor::3.2}
Let $X=\PSDG$ be a distance-regular pseudo-semi-dihedrant with $n=2^r>4$. Then:
\begin{enumerate}[(i)]
    \item if $X$ is antipodal, then the antipodal quotient $\overline{X}$ is a  distance-regular circulant or a distance-regular Cayley graph over  $\mathbb{Z}_t\oplus\mathbb{Z}_2$ with $t\mid n$;
    \item if $X$ is bipartite, then the halved graph $\frac{1}{2}X$ is a  distance-regular circulant or distance-regular Cayley graph over   $2\mathbb{Z}_{2n}\oplus\mathbb{Z}_2\cong \mathbb{Z}_n\oplus\mathbb{Z}_2$.
\end{enumerate}
\end{corollary}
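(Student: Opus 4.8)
The plan is to imitate the proof of Corollary~\ref{cor::3.1} step by step, using Lemma~\ref{lem::3.2} in place of Lemma~\ref{lem::3.1} and replacing the structural discussion of index-$2$ subgroups of $\SD$ by the corresponding one for $\PSD$.

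For part~(i), assume $X$ is antipodal. Lemma~\ref{lem::2.1}~(ii) gives that $\overline{X}$ is distance-regular. The set $\mathcal{B}$ of antipodal classes is an imprimitivity system for $\PSD$ acting on $V(X)$ by left multiplication, so by Lemma~\ref{lem::2.9} the block $B\in\mathcal{B}$ containing $1$ is a subgroup of $\PSD$ with $|B|>1$. If $B\le\langle\rho\rangle$, then Lemma~\ref{lem::3.2} says $B$ is normal with $\PSD/B\cong\mathbb{Z}_t\oplus\mathbb{Z}_2$ for some $t\mid n$, and Lemma~\ref{lem::2.9}~(i) yields $\overline{X}=\Cay(\PSD/B,S/B)$, which is a circulant when $t=1$ and a Cayley graph over $\mathbb{Z}_t\oplus\mathbb{Z}_2$ otherwise. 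If $B\not\le\langle\rho\rangle$, then $\langle\rho\rangle B=\PSD$ because $\langle\rho\rangle$ has index $2$, and Lemma~\ref{lem::2.9}~(ii) applied with the abelian subgroup $A=\langle\rho\rangle$ shows that $\overline{X}$ is isomorphic to a Cayley graph over the cyclic group $\langle\rho\rangle/(\langle\rho\rangle\cap B)$, hence to a circulant. This settles~(i).

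For part~(ii), assume $X$ is bipartite. By Lemma~\ref{lem::2.1}~(i), $X^{+}$ and $X^{-}$ are distance-regular; moreover $\mathcal{B}=\{V(X^{+}),V(X^{-})\}$ is an imprimitivity system for $\PSD$, and taking $1\in V(X^{+})$, Lemma~\ref{lem::2.9} shows that $H:=V(X^{+})$ is an index-$2$ subgroup of $\PSD$ acting regularly on $X^{+}$ by left multiplication, so $X^{+}$ is a Cayley graph over $H$. It then remains to identify the index-$2$ subgroups of $\PSD$. Since $[\rho,\tau]=\rho^{n}$, the commutator subgroup is $\langle\rho^{n}\rangle$, which has order $2$, so the abelianization has order $2n$, and a short computation shows it is isomorphic to $\mathbb{Z}_n\oplus\mathbb{Z}_2$; pulling back, $\PSD$ has exactly the three index-$2$ subgroups $\langle\rho\rangle$, $\langle\rho\tau\rangle$ and $\langle\rho^{2},\tau\rangle$. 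Here $\tau\rho^{2}\tau=\rho^{2(n+1)}=\rho^{2}$ shows $\langle\rho^{2},\tau\rangle\cong\mathbb{Z}_n\oplus\mathbb{Z}_2\cong 2\mathbb{Z}_{2n}\oplus\mathbb{Z}_2$, while $(\rho\tau)^{2}=\rho^{n+2}$ has order $n$ because $\gcd(2n,n+2)=2$, so $\langle\rho\tau\rangle$ is cyclic of order $2n$ (and equals $\langle\rho^{2},\rho\tau\rangle$). Hence $X^{+}$ is a distance-regular circulant or a distance-regular Cayley graph over $2\mathbb{Z}_{2n}\oplus\mathbb{Z}_2$; since $X$ is vertex-transitive we have $X^{-}\cong X^{+}$, which gives~(ii).

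Most of this is a routine transcription of the semi-dihedral argument, so the step that needs care is the classification of the index-$2$ subgroups of $\PSD$. The key observation is that, in contrast with $\SD$ (where the index-$2$ subgroup $\langle\rho^{2},\rho\tau\rangle$ is dicyclic and $\langle\rho^{2},\tau\rangle$ is dihedral), in $\PSD$ the subgroup $\langle\rho\tau\rangle$ turns out to be cyclic and $\langle\rho^{2},\tau\rangle$ abelian; this is exactly why the conclusion lists only circulants and Cayley graphs over $\mathbb{Z}_t\oplus\mathbb{Z}_2$ (respectively $\mathbb{Z}_n\oplus\mathbb{Z}_2$), with no dihedrants or dicirculants.
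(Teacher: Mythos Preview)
Your proof is correct and follows exactly the approach the paper intends: it transcribes the argument of Corollary~\ref{cor::3.1} using Lemma~\ref{lem::3.2} in place of Lemma~\ref{lem::3.1}, and your explicit identification of the three index-$2$ subgroups of $\PSD$ (with $\langle\rho^{2},\tau\rangle\cong\mathbb{Z}_n\oplus\mathbb{Z}_2$ and $\langle\rho\tau\rangle=\langle\rho^{2},\rho\tau\rangle$ cyclic) is precisely the structural input needed for part~(ii). The paper itself omits the proof, merely citing Lemmas~\ref{lem::2.1}, \ref{lem::2.9} and~\ref{lem::3.2}, so your write-up is in fact more detailed than what appears there.
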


The following two lemmas can be deduced directly from the  definition of Cayley graphs.
\begin{lemma}\label{lem::3.3}
Let $R,T\subseteq\mathbb{Z}_{2n}$ and $0\not\in R$. 
\begin{enumerate}[(i)]
    \item If $X=\SDG$, then $N(\rho^i)=\rho^{i+R}\cup \rho^{i+T}\tau$ and  $N(\rho^i\tau)=\rho^{i+(n-1)T}\cup \rho^{i+(n-1)R}\tau.$
\item If $X=\PSDG$, then $N(\rho^i)=\rho^{i+R}\cup \rho^{i+T}\tau$ and  $N(\rho^i\tau)=\rho^{i+(n+1)T}\cup \rho^{i+(n+1)R}\tau$.
\end{enumerate}
\end{lemma}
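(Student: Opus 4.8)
The plan is to compute the neighbourhoods directly from the definition of a Cayley graph, using only the defining relations of the two groups. Recall that in $\Cay(G,S)$ the neighbourhood of a vertex $g$ is $N(g)=\{h\in G\mid g^{-1}h\in S\}=gS$, and here $S=\rho^R\cup\rho^T\tau$. So the whole lemma is an exercise in left-multiplying $S$ by $\rho^i$ and by $\rho^i\tau$ inside $\SD$ and $\PSD$.

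First I would handle the vertices lying in $\langle\rho\rangle$. For $g=\rho^i$ one simply left-multiplies: $\rho^iS=\rho^i\rho^R\cup\rho^i\rho^T\tau=\rho^{i+R}\cup\rho^{i+T}\tau$, and this holds verbatim in both $\SD$ and $\PSD$ since no occurrence of $\tau$ has to be moved past a power of $\rho$. This already gives the first displayed equality in both (i) and (ii).

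Next, for vertices of the form $g=\rho^i\tau$, the key input is the conjugation rule. In $\SD$ the relation $\tau\rho\tau=\rho^{n-1}$ gives $\tau\rho^k=\rho^{(n-1)k}\tau$ for every $k$, and in $\PSD$ the relation $\tau\rho\tau=\rho^{n+1}$ gives $\tau\rho^k=\rho^{(n+1)k}\tau$. I would then expand $N(\rho^i\tau)=\rho^i\tau S=\rho^i\tau\rho^R\cup\rho^i\tau\rho^T\tau$. Pushing $\tau$ to the right through $\rho^R$ turns $\rho^i\tau\rho^R$ into $\rho^{i+(n\mp1)R}\tau$, while $\rho^i\tau\rho^T\tau$ becomes $\rho^{i+(n\mp1)T}\tau^2=\rho^{i+(n\mp1)T}$ using $\tau^2=1$. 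Collecting the two pieces yields $N(\rho^i\tau)=\rho^{i+(n-1)T}\cup\rho^{i+(n-1)R}\tau$ for $\SD$ and $N(\rho^i\tau)=\rho^{i+(n+1)T}\cup\rho^{i+(n+1)R}\tau$ for $\PSD$, as claimed.

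There is essentially no obstacle here: the statement is a direct unwinding of the definition of a Cayley graph together with the two group presentations. The only point requiring a little care is the bookkeeping of the conjugation exponent --- remembering that moving $\tau$ past $\rho^k$ multiplies the exponent by $n-1$ (resp.\ $n+1$) rather than by $-1$ --- and tracking that the $\rho$-part and the $\rho\tau$-part of $S$ swap roles after left-multiplication by $\rho^i\tau$. The hypothesis $0\notin R$ plays no role in the computation itself; it is recorded only to guarantee that $S$ is a legitimate connection set (i.e.\ $1\notin S$).
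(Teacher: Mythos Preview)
Your argument is correct and is exactly the approach the paper intends: the paper states that this lemma ``can be deduced directly from the definition of Cayley graphs'' and gives no further proof, so your explicit computation of $gS$ using the conjugation rule $\tau\rho^k=\rho^{(n\mp1)k}\tau$ in $\SD$ (resp.\ $\PSD$) is precisely what is being omitted there.
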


\begin{lemma}\label{lem::3.4}
Let  $n=2^r$, $a\in \mathbb{Z}_{2n}^*$ and $b\in2\mathbb{Z}_{2n}$. Then $\SDG\cong \mathrm{SD}(n,aR,b+aT)$ and  $\PSDG\cong\mathrm{PSD}(n,R,n+T)$.
\end{lemma}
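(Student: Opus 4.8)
The plan is to verify the two claimed isomorphisms directly by exhibiting explicit graph isomorphisms at the level of the underlying groups, and then checking that these maps send connection sets to connection sets. For the first isomorphism, I would start from the observation that the map $\phi_a\colon\rho^i\mapsto\rho^{ai},\ \rho^i\tau\mapsto\rho^{ai}\tau$ is a well-defined bijection of $\mathrm{SD}_n$ whenever $a\in\mathbb{Z}_{2n}^\ast$ (so that $i\mapsto ai$ is a bijection of $\mathbb{Z}_{2n}$). To see this is a group automorphism, I must check it respects the defining relation $\tau\rho\tau=\rho^{n-1}$: one computes $\phi_a(\tau)\phi_a(\rho)\phi_a(\tau)=\tau\rho^a\tau=\rho^{(n-1)a}=\phi_a(\rho^{n-1})$, using that $(n-1)a\equiv(n-1)a\pmod{2n}$ — here the key point is that $a$ is odd, so that conjugation by $\tau$ commutes appropriately; more precisely $\tau\rho^a\tau=(\rho^{n-1})^a=\rho^{(n-1)a}$ and since $a$ is odd, $(n-1)a\equiv n-a\equiv\ldots$, so one checks $\rho^{(n-1)a}=\rho^{a(n-1)}$ directly, which is trivial. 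Since $\phi_a$ is an automorphism, $\mathrm{Cay}(\mathrm{SD}_n,S)\cong\mathrm{Cay}(\mathrm{SD}_n,\phi_a(S))$, and $\phi_a(\rho^R\cup\rho^T\tau)=\rho^{aR}\cup\rho^{aT}\tau$, giving $\SDG\cong\mathrm{SD}(n,aR,aT)$.

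To pick up the translation parameter $b$, I would compose with left multiplication by $\rho^{b/2}$ when $b$ is even — but that is not an automorphism. Instead the right tool is conjugation by a suitable element, or better, the map $\psi_b\colon\rho^i\mapsto\rho^i,\ \rho^i\tau\mapsto\rho^{i+b}\tau$ for $b\in2\mathbb{Z}_{2n}$. I would check $\psi_b$ is an automorphism: it is clearly a bijection, and one needs $\psi_b(\tau\rho\tau)=\psi_b(\rho^{n-1})$, i.e. $\psi_b(\tau)\psi_b(\rho)\psi_b(\tau)=\rho^{n-1}$. Now $\psi_b(\tau)=\rho^b\tau$, so the left side is $\rho^b\tau\cdot\rho\cdot\rho^b\tau=\rho^b\cdot\rho^{n-1}\cdot\tau\rho^b\tau=\rho^{b+n-1}\rho^{(n-1)b}=\rho^{b+n-1+(n-1)b}$; since $b$ is even, $(n-1)b\equiv -b\pmod{2n}$ (as $nb\equiv0$ because $n\mid b$... actually $b\in2\mathbb{Z}_{2n}$ only gives $2\mid b$, so I need $nb\equiv 0\pmod{2n}$, i.e. $2\mid b$, which holds), hence $b+(n-1)b\equiv 0$, and the exponent is $n-1$ as required. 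So $\psi_b\in\mathrm{Aut}(\mathrm{SD}_n)$ and $\psi_b(\rho^R\cup\rho^T\tau)=\rho^R\cup\rho^{b+T}\tau$. Composing $\psi_b\circ\phi_a$ (checking the composite still maps $\rho^T\tau$ to $\rho^{b+aT}\tau$ — note the order of composition matters and one must track whether $\phi_a$ acts on $T$ before or after the shift) yields $\SGG\cong\mathrm{SD}(n,aR,b+aT)$.

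For the pseudo-semi-dihedral case, the claim is only $\PSDG\cong\mathrm{PSD}(n,R,n+T)$, which is the single special instance $b=n$ (and $a=1$) of the same phenomenon. I would verify that $\psi_n\colon\rho^i\mapsto\rho^i,\ \rho^i\tau\mapsto\rho^{i+n}\tau$ is an automorphism of $\PSD$: the defining relation is now $\tau\rho\tau=\rho^{n+1}$, and one computes $\psi_n(\tau)\psi_n(\rho)\psi_n(\tau)=\rho^n\tau\rho\rho^n\tau=\rho^n\rho^{n+1}\tau\rho^n\tau=\rho^{2n+1}\rho^{(n+1)n}=\rho^{1+(n+1)n}$; since $n$ is even, $(n+1)n\equiv n\equiv 0$... one has to be careful, $(n+1)n=n^2+n$, and $n^2\equiv 0\pmod{2n}$ iff $2\mid n$, while $n\not\equiv 0$, so $(n+1)n\equiv n\pmod{2n}$, giving exponent $1+n=n+1$ as required. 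The constraint forcing $b=n$ rather than an arbitrary even $b$ comes precisely from this computation: for general even $b$ one gets exponent $b+(n+1)b+(\text{extra }n\text{-terms})$ which need not collapse to $n+1$. I expect the main obstacle to be bookkeeping: tracking the exponents modulo $2n$ with care about which multiples of $n$ vanish, correctly composing the two automorphisms in the right order, and confirming that the compatibility condition $T=(n+1)T$ (resp. $T=(n-1)T$) defining a semi-dihedrant (resp. pseudo-semi-dihedrant) is preserved under these maps — otherwise the image would not be a legitimate semi-dihedrant of the stated form.
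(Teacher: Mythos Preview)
Your proposal is correct and supplies precisely the details that the paper omits: the paper states Lemma~\ref{lem::3.4} without proof, saying only that it ``can be deduced directly from the definition of Cayley graphs''. Exhibiting the explicit group automorphisms $\phi_a\colon \rho^i\mapsto\rho^{ai},\ \rho^i\tau\mapsto\rho^{ai}\tau$ and $\psi_b\colon \rho^i\mapsto\rho^i,\ \rho^i\tau\mapsto\rho^{i+b}\tau$ (and verifying they respect the defining relations, which is where the parity conditions on $a$ and $b$ enter) is exactly the natural way to make that phrase precise, so your approach is what the paper intends.
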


Let $X=\Cay(G,S)$ be a Cayley graph of diameter $d$. Recall that 
$$
\mathcal{N}_i=\{g\in G\mid d_X(g,1)=i\},~i=0,1,\ldots,d.
$$
For  $X=\SDG$ or $\PSDG$, we denote  $R_i=\{a\in\mathbb{Z}_{2n}\mid \rho^a\in\mathcal{N}_i\}$ and $T_i=\{b\in\mathbb{Z}_{2n}\mid \rho^b\tau\in\mathcal{N}_i\}$. Clearly, $R_1=R$, $T_1=T$, and both $\cup_{i=0}^d R_i$ and $\cup_{i=0}^d T_i$ are partitions of $\mathbb{Z}_{2n}$.

\begin{lemma}[{\cite[Lemma 4.4, Lemma 4.5]{MP07}}]\label{lem::3.5}
Let $X$ be an antipodal distance-regular dihedrant with diameter $d$. If $d=3$, then $X$ is bipartite, and if  $d=4$, then $X$ is non-bipartite unless $X\cong C_8$.  
\end{lemma}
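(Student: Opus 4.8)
The plan is to derive both halves of the statement from the complete classification of distance-regular dihedrants recorded in Theorem~\ref{thm::dih}, which exhibits every distance-regular dihedrant as a cycle $C_{2n}$, a complete graph $K_{2n}$, a complete multipartite graph $K_{t\times m}$, a crown $K_{n,n}-nK_2$, or a member of one of the two families (i), (ii) of that theorem. The first step is to isolate the \emph{antipodal} ones. The families (i) and (ii) are explicitly declared non-antipodal in Theorem~\ref{thm::dih}, so they are discarded at once. Among the remaining graphs, $K_{2n}$ has diameter $1$; the complete multipartite graph $K_{t\times m}$ (with $t,m\ge 2$) is antipodal of diameter $2$, its parts being the antipodal classes; the crown $K_{n,n}-nK_2$ is antipodal of diameter $3$, with the matched pairs as antipodal classes; and the even cycle $C_{2n}$ is antipodal (each vertex has a unique vertex at maximal distance) of diameter exactly $n$. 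Hence the antipodal distance-regular dihedrants are precisely $K_{2n}$, $K_{t\times m}$, $K_{n,n}-nK_2$, and $C_{2n}$, of diameters $1$, $2$, $3$, and $n$, respectively.

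With this list in hand, both assertions reduce to reading off the diameter. For $d=3$, I would note that the only list entries of diameter $3$ are the crowns $K_{n,n}-nK_2$ (the smallest of which, $K_{3,3}-3K_2$, is $C_6=C_{2\cdot 3}$); all of these are bipartite, so every antipodal distance-regular dihedrant of diameter $3$ is bipartite, as claimed. For $d=4$, the only list entry of diameter $4$ is the cycle $C_8=C_{2\cdot 4}$; consequently an antipodal distance-regular dihedrant of diameter $4$ is necessarily isomorphic to $C_8$. The assertion ``$X$ is non-bipartite unless $X\cong C_8$'' then follows immediately, since the hypothesis $X\not\cong C_8$ never occurs; equivalently, the unique such graph is $C_8$ itself, so the disjunction ``$X$ non-bipartite or $X\cong C_8$'' always holds through its second alternative.

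The genuinely delicate point, which I would treat with care, is not a computation but the correct logical reading of the $d=4$ clause together with the verification that no family other than $C_{2n}$ attains diameter $4$: one must confirm that the complete multipartite graphs contribute only diameter $2$ and the crowns only diameter $3$, so that $C_8$ really is the sole diameter-$4$ antipodal distance-regular dihedrant. This is exactly the step whose mishandling would reverse the direction of the conclusion. Finally, I would remark that the original argument of Miklavi\v{c} and Poto\v{c}nik cannot invoke Theorem~\ref{thm::dih}, since these lemmas are used on the way to proving it; their self-contained route instead fixes the antipodal class of the identity as a subgroup $B$ via Lemma~\ref{lem::2.9}, passes to the antipodal quotient, and rules out non-bipartite antipodal covers of complete graphs by a Fourier/eigenvalue analysis of the dihedrant spectrum, which is where the real effort lies. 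In the present paper Theorem~\ref{thm::dih} is available as a cited result, so the classification route above is both valid and far shorter, and that is the proof I would present.
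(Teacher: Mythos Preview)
The paper does not supply a proof of Lemma~\ref{lem::3.5}; it simply quotes \cite[Lemmas~4.4, 4.5]{MP07}. Your derivation from the dihedrant classification (Theorem~\ref{thm::dih}) is correct: the two families (i), (ii) are explicitly declared non-antipodal there and are discarded, and among the remaining trivial examples only $K_{n,n}-nK_2$ and $C_6$ have diameter $3$ (all bipartite) while only $C_8$ has diameter $4$, so both clauses follow, the second vacuously.

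Your closing remark is exactly on point and worth keeping: in \cite{MP07} these lemmas are established \emph{before} the classification and are ingredients in its proof, so Miklavi\v{c} and Poto\v{c}nik cannot argue this way; they instead identify the antipodal class of $1$ as a subgroup (Lemma~\ref{lem::2.9}), pass to the quotient, and carry out a Fourier/eigenvalue analysis of the putative non-bipartite cover. In the present paper both Theorem~\ref{thm::dih} and Lemma~\ref{lem::3.5} are imported as cited facts, so no circularity arises and your short route is legitimate.
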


Analogously, by Lemma \ref{lem::2.6}, we can deduce the following result.
\begin{lemma}\label{lem::3.6}
Let $X$ be an antipodal distance-regular Cayley graph over $\mathbb{Z}_n\oplus \mathbb{Z}_2$ ($s\geq 2$ is even) with diameter $d$. Then $d\leq 3$. Moreover, if $d=3$, then $X$ is bipartite.
\end{lemma}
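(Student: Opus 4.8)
The plan is to reduce the statement to the classification of distance-regular Cayley graphs over $\mathbb{Z}_n\oplus\mathbb{Z}_2$ recorded in Lemma~\ref{lem::2.6}, in the same spirit as Lemma~\ref{lem::3.5} is obtained from the analogous classification for dihedrants. Since a distance-regular graph is connected, and since $\mathbb{Z}_n\oplus\mathbb{Z}_2\not\cong\mathbb{Z}_{2n}$ for even $n$, Lemma~\ref{lem::2.6} applies directly once $n>2$; the only remaining case, the group $\mathbb{Z}_2\oplus\mathbb{Z}_2$, is handled by hand, since its sole connected Cayley graphs are $K_4$ and $C_4=K_{2,2}$, both of diameter at most $2$, so the conclusion is trivial there.

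Assume now $n>2$. By Lemma~\ref{lem::2.6}, $X$ is isomorphic to one of: the complete graph $K_{2n}$; a complete multipartite graph $K_{t\times m}$ with $tm=2n$; the crown graph $K_{n,n}-nK_2$; or a graph of type~(iv) in that lemma. The complete and complete multipartite graphs have diameter at most $2$, the crown graph $K_{n,n}-nK_2$ has diameter $3$ (for $n\ge 3$, which is forced once $X$ is connected), and type~(iv) is explicitly a distance-regular graph of diameter $3$. In particular every distance-regular Cayley graph over $\mathbb{Z}_n\oplus\mathbb{Z}_2$ has diameter at most $3$, which already yields the first assertion without even invoking antipodality.

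For the second assertion, suppose $d=3$. By the case analysis above, $X$ must then be isomorphic either to $K_{n,n}-nK_2$ or to a graph of type~(iv). Lemma~\ref{lem::2.6} asserts that the graphs of type~(iv) are non-antipodal, so this option is incompatible with the hypothesis that $X$ is antipodal; hence $X\cong K_{n,n}-nK_2$, which is bipartite, with the two natural sides (each of size $n$) as its colour classes. For completeness I would also record the standard observation that $K_{n,n}-nK_2$ is genuinely antipodal of diameter $3$: writing the two sides as $\{a_1,\dots,a_n\}$ and $\{b_1,\dots,b_n\}$ with $a_i\sim b_j$ exactly when $i\ne j$, one checks $d(a_i,b_i)=3$ while all other distances equal $1$ or $2$, so the antipodal classes are precisely the pairs $\{a_i,b_i\}$.

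I do not anticipate any genuine obstacle here: the whole argument is a short finite case check against Lemma~\ref{lem::2.6}, mirroring the proof of Lemma~\ref{lem::3.5}. The only places deserving a word of care are the degenerate order-$4$ group $\mathbb{Z}_2\oplus\mathbb{Z}_2$ and the verification that the unique diameter-$3$ candidate surviving the antipodality hypothesis, namely $K_{n,n}-nK_2$, is indeed bipartite — both immediate.
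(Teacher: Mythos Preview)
Your proposal is correct and follows exactly the approach the paper indicates: the paper simply writes ``Analogously, by Lemma~\ref{lem::2.6}, we can deduce the following result'' and omits the details, so your case check against the classification in Lemma~\ref{lem::2.6} is precisely what is intended. Your additional handling of the degenerate case $n=2$ (not covered by Lemma~\ref{lem::2.6}) and the explicit verification that $K_{n,n}-nK_2$ is antipodal and bipartite are harmless extras.
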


In what follows, we focus on the characterization of distance-regular semi-dihedrants and    pseudo-semi-dihedrants with additional  restrictions, namely those which are  either antipodal, non-bipartite, and of diameter $3$, or  antipodal, bipartite,  and of diameter $4$. 

\begin{lemma}\label{lem::3.7}
Let $n=2^r>4$. There are no antipodal non-bipartite distance-regular semi-dihedrants or pseudo-semi-dihedrants of order $4n$ with diameter $3$.
\end{lemma}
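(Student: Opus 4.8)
I would argue by contradiction and handle $\SD$ and $\PSD$ in parallel; write $G$ for the underlying group and suppose $X$ is antipodal, non-bipartite, distance-regular of diameter $3$ with $n=2^{r}>4$. First I would extract the rigid numerics. An imprimitive distance-regular graph of diameter $3$ has valency $k\ge 3$ (valency $\le 2$ gives a cycle, whose diameter is too large), so Lemma~\ref{lem::2.3}(i) applies: $X$ is an $f$-fold antipodal cover with $4n=f(k+1)$, $k=\mu(f-1)+\lambda+1$, intersection array $\{k,\mu(f-1),1;1,\mu,k\}$, spectrum $\{k^{1},\theta_{1}^{m_{1}},(-1)^{k},\theta_{3}^{m_{3}}\}$ and $\theta_{1}\theta_{3}=-k$. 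As $4n=2^{r+2}$ is a $2$-power I may write $f=2^{t}$, $k+1=2^{u}$ with $t+u=r+2$, $t\ge 1$, $u\ge 2$. By Lemma~\ref{lem::2.1}(ii)--(iv) the antipodal quotient $\overline X$ is primitive of diameter $1$, so $\overline X\cong K_{2^{u}}$, and by Lemma~\ref{lem::2.9} the fibre $B$ through $1$ is a subgroup of $G$ of order $2^{t}$.

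The central step is the reduction to $t=1$. The key inputs are: $\rho^{n}$ is central in $G$, hence $C:=\langle\rho^{n}\rangle\trianglelefteq G$; when $t\ge 2$ one has $\rho^{n}\in B$, so $1<|C|<|B|$; and $\rho^{n}\notin S\cdot S$ because two antipodal vertices of a cover of a complete graph have no common neighbour (equivalently $p^{3}_{1,1}=0$, since $S=\mathcal N_{1}$ and $\rho^{n}\in\mathcal N_{3}$). Assuming $t\ge 2$, the partition $\mathcal B_{0}$ of $V(X)$ into cosets of $C$ then consists of independent $2$-sets lying properly inside the fibres of $X$, with no edges or a perfect matching between any two distinct blocks; thus $\mathcal B_{0}$ is a cover-type equitable partition, and Lemma~\ref{lem::2.2} shows $X_{\mathcal B_{0}}=\Cay(G/C,S/C)$ is an antipodal distance-regular graph of diameter $3$ on $2n$ vertices. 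Since $G/C\cong D_{n}$ for $G=\SD$ and $G/C\cong\mathbb Z_{n}\oplus\mathbb Z_{2}$ for $G=\PSD$, this $X_{\mathcal B_{0}}$ is a distance-regular dihedrant, respectively a distance-regular Cayley graph over $\mathbb Z_{n}\oplus\mathbb Z_{2}$; by Lemma~\ref{lem::3.5}, respectively Lemma~\ref{lem::3.6}, it is bipartite, so Theorem~\ref{thm::dih}, respectively Lemma~\ref{lem::2.6}, forces $X_{\mathcal B_{0}}\cong K_{n,n}-nK_{2}$. Lifting a $2$-colouring of $X_{\mathcal B_{0}}$ through $\mathcal B_{0}$ makes $X$ bipartite --- a contradiction. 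Hence $t=1$, i.e.\ $X$ is a $2$-fold antipodal cover of $K_{2n}$ of valency $k=2n-1$.

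Finally I would dispose of $t=1$. Here $B=\langle b\rangle$ with $b^{2}=1$, $X_{3}=\Cay(G,\{b\})$ is a perfect matching commuting with the adjacency matrix $A$, and $A$ preserves the two eigenspaces of $X_{3}$; the $(+1)$-eigenspace carries the $K_{2n}$-spectrum $\{(2n-1)^{1},(-1)^{2n-1}\}$, so all of the eigenvalue $-1$ (multiplicity $k=2n-1$) lives there, while $\theta_{1},\theta_{3}$ (with $m_{1}+m_{3}=2n$) are carried by the $2$-dimensional irreducible representations of $G$. I would then combine: integrality of $m_{1}=\frac{-\theta_{3}}{\theta_{1}-\theta_{3}}\,2n$ and of $m_{3}$; the relation $k=\mu(f-1)+\lambda+1$ together with $\lambda-\mu=\theta_{1}+\theta_{3}$, $\lambda\ge 0$ and $\mu\ge 1$; the fact that non-bipartiteness forces the least eigenvalue to exceed $-k$, so that $\theta_{1}\theta_{3}=-(2^{r+1}-1)$ with $\theta_{1},\theta_{3}\notin\{\pm(2^{r+1}-1)\}$, i.e.\ $2^{r+1}-1$ is a product of two odd integers $\ge 3$; the explicit shape of the action of $\underline S$ in the relevant $2$-dimensional representations; and the conditions $S=S^{(-1)}$ and $\langle S\rangle=G$. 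These constraints are mutually incompatible, which finishes the proof.

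\textbf{Main obstacle.} I expect the last step to be the crux: it amounts to showing that no regular two-graph on $2^{r+1}$ points is realised as the $2$-fold antipodal quotient structure of a semi-dihedrant or pseudo-semi-dihedrant, and this needs a fairly delicate simultaneous use of the divisibility of $m_{1},m_{3}$ by $\theta_{1}-\theta_{3}$, the exact multiplicities forced by the $2$-dimensional representations, and the inverse-closedness and generation conditions on $R$ and $T$.
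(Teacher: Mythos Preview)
Your reduction to the $2$-fold case ($t=1$) is essentially the paper's argument: you quotient by a central subgroup inside the fibre $B$, invoke Lemma~\ref{lem::2.2} to get an antipodal diameter-$3$ dihedrant (resp.\ Cayley graph over $\mathbb Z_n\oplus\mathbb Z_2$), and use Lemmas~\ref{lem::3.5}/\ref{lem::3.6} to force bipartiteness, which lifts back to $X$. (The detour through ``$X_{\mathcal B_0}\cong K_{n,n}-nK_2$'' is harmless but unnecessary: once the quotient is bipartite you are done.) So this half of your plan is fine.

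The $t=1$ case, however, is a genuine gap---and you correctly flag it as the crux. The constraints you list (integrality of $m_1,m_3$, the factorisation $\theta_1\theta_3=-(2^{r+1}-1)$ with $|\theta_i|\ge 3$, inverse-closedness, generation) are \emph{not} mutually incompatible on their own. For instance, at $r=3$ one has $2^{r+1}-1=15=3\cdot 5$, and both parameter sets $(k,\lambda,\mu,\theta_1,\theta_3)=(15,6,8,3,-5)$ and $(15,8,6,5,-3)$ give integral multiplicities $\{6,10\}$; regular two-graphs on $16$ points (hence $2$-fold antipodal covers of $K_{16}$) do exist. You also implicitly assume $\theta_1,\theta_3\in\mathbb Z$, but Lemma~\ref{lem::2.3}(i) only guarantees this when $\lambda\ne\mu$; the case $\lambda=\mu$ (with $\theta_{1,3}=\pm\sqrt{2^{r+1}-1}$) is not addressed. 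So ``these constraints are mutually incompatible'' is exactly the assertion that needs proof, not a conclusion you can state.

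What the paper does instead is split on the identity of the unique antipode $b\in\mathcal N_3$. If $b=\rho^n$, then $R_2=n+R$ and a one-line parity check at $n/2$ gives a contradiction. If $b=\rho^c\tau$ (reduced to $b=\tau$ via Lemma~\ref{lem::3.4}), the paper shows $R_2=T$, $T_2=R$, derives the identities $\underline{\mathbf r}^2+\underline{\mathbf t}^2=k+\lambda\underline{\mathbf r}+\mu\underline{\mathbf t}$ and $2\underline{\mathbf r}\,\underline{\mathbf t}=\lambda\underline{\mathbf t}+\mu\underline{\mathbf r}$, so that $\underline{\mathbf x}=\underline{\mathbf r}-\underline{\mathbf t}$ takes only the values $\theta_1,\theta_3$ on $\mathbb Z_{2n}$; applying the Fourier inversion to the level set $C=\{z:\underline{\mathbf x}(z)=\theta_3\}$ then shows $\Cay(\mathbb Z_{2n},C)$ is a strongly regular circulant with $\lambda'=\mu'$, which is ruled out by Theorem~\ref{thm::cir}. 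This Fourier step is the actual content that replaces your unproved ``mutual incompatibility''; your representation-theoretic sketch could in principle be developed into an equivalent argument, but as written it does not get there.
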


\begin{proof}
By contradiction, assume that $X=\SDG$ (resp. $X=\PSDG$) is an antipodal non-bipartite distance-regular semi-dihedrant (resp. pseudo-semi-dihedrant) of diameter $3$. Let $k$ and $p$ ($p\geq 2$) denote the valency and the common size of antipodal classes (or fibres) of $X$, respectively. According to Lemma \ref{lem::2.3} (i), $k+1=\frac{4n}{p}$,  and $X$ has the intersection array
$\{k,\mu(p-1),1;1,\mu,k\}$
and eigenvalues $k$, $\theta_1$, $\theta_2=-1$, $\theta_3$, where
\begin{equation}\label{equ::3}
\theta_1=\frac{\lambda-\mu}{2}+\delta,~~\theta_3=\frac{\lambda-\mu}{2}-\delta~~\mbox{and}~~\delta=\sqrt{k+\left(\frac{\lambda-\mu}{2}\right)^2}.
\end{equation}
If $k=2$, then $X\cong C_{4n}$, which is impossible because $X$ is non-bipartite. If $k=3$, then $k+1=\frac{4n}{p}$ indicates $p=n$, and so $k=\mu(p-1)+\lambda+1=\mu(n-1)+\lambda+1\geq 3\mu+1\geq 4$,  a contradiction. Therefore, $k\geq 4$ and $p<n$. Let $H=\mathcal{N}_3\cup\{1\}$. Then $H$ is an antipodal class of $X$, and $|H|=p$. Since $\SD$ (resp. $\PSD$) acts regularly on  $V(X)$ by left multiplication, the antipodal classes of $X$ form an imprimtivity system for $\SD$ (resp. $\PSD$). By Lemma \ref{lem::2.9}, $H$ is a subgroup of $\SD$ (resp. $\PSD$). If $p$ is not prime, then $H$ has a non-trivial subgroup $K$ contained in $\langle\rho\rangle$. Let $\mathcal{B}$ denote the partition consisting of all orbits of $K$ acting on $V(X)$ by left multiplication. As $K$ is normal in $\SD$ (resp. $\PSD$) by Lemma \ref{lem::3.1} (resp. Lemma \ref{lem::3.2}), the partition $\mathcal{B}$ is also an imprimtivity system for $\SD$, and it follows from  Lemma \ref{lem::2.9} (i) and Lemma \ref{lem::3.1} (resp. Lemma \ref{lem::3.2}) that the quotient graph $X_\mathcal{B}$ is a dihedrant (resp. Cayley graph over $\mathbb{Z}_t\oplus\mathbb{Z}_2$  with $t\mid n$). Observe that $\mathcal{B}$ is an equitable partition of $X$, and each block of $\mathcal{B}$ is contained in some fibre of $X$ and is neither a single vertex nor a fibre. By Lemma \ref{lem::2.2}, $X_\mathcal{B}$ is an antipodal distance-regular graph with diameter $3$. If $X_\mathcal{B}$ is bipartite, then $X$ is also bipartite, a contradiction. Hence, $X_\mathcal{B}$ is an antipodal non-bipartite distance-regular dihedrant (resp. Cayley graph over $\mathbb{Z}_t\oplus\mathbb{Z}_2$ with $t\mid n$) of diameter $3$, which is impossible by Lemma \ref{lem::3.5} (resp. Lemma \ref{lem::3.6}). Therefore, $p$ is a prime number, and thus $p=2$ due to $p\mid 4n$ and $n=2^r$. This implies that  $\mathcal{N}_3=\{\rho^c\tau\}$ for some $c\in 2\mathbb{Z}_{2n}$ (resp. $c\in\{0,n\}$) or $\mathcal{N}_3=\{\rho^n\}$. Also note that  $X$ has the intersection array $\{k,\mu,1;1,\mu,k\}$, where $k=2n-1=\mu+\lambda+1$. We consider the following two situations.

{\flushleft\bf Case 1.} $\mathcal{N}_3=\{\rho^c\tau\}$ for some $c\in 2\mathbb{Z}_{2n}$ (resp. $c\in\{0,n\}$). 

By Lemma \ref{lem::3.4}, we may assume that $\mathcal{N}_3=\{\tau\}$ in this situation. 
\begin{claim}\label{claim::1}
The following two statements hold.
\begin{enumerate}[(i)]
    \item If $X=\SDG$, then $R_2=T=-T=(n-1)T$, $T_2=R=-R=(n-1)R$, and $\underline{\mathbf{r}}+\underline{\mathbf{t}}=2n\Delta_0-1$. 
    \item If $X=\PSDG$, then $R_2=T=-T=(n+1)T$, $T_2=R=-R=(n+1)R$, and $\underline{\mathbf{r}}+\underline{\mathbf{t}}=2n\Delta_0-1$.
\end{enumerate}
\end{claim}
\begin{proof}
We only prove the claim for $X=SD(n,R,T)$ since the case $X=PSD(n,R,T)$ is similar. Note that $T=(n+1)T=-(n-1)T$. By Lemma \ref{lem::3.3}, $N(\tau)=\rho^{(n-1)T}\cup \rho^{(n-1)R}\tau=\rho^{-T}\cup \rho^{(n-1)R}\tau$. On the other hand, $N(\tau)=\mathcal{N}_2=\rho^{R_2}\cup \rho^{T_2} \tau$. Thus $R_2=-T$ and $T_2=(n-1)R$. Since both $R\cup R_2$ and $T\cup T_2$ are partitions of $\mathbb{Z}_{2n}\setminus\{0\}$ and $R=-R$, we have $-T=R_2=-R_2=T$, and so $R=T_2=(n-1)R$. Moreover, we conclude that $\underline{\mathbf{r}}+\underline{\mathbf{t}}=2n\Delta_0-1$ because $R\cup R_2=R\cup T$ is a partition of $\mathbb{Z}_{2n}\setminus\{0\}$. 
\end{proof}

\begin{claim}\label{claim::2}
\rm $\underline{\mathbf{r}}^2+\underline{\mathbf{t}}^2=k+\lambda \underline{\mathbf{r}}+\mu \underline{\mathbf{t}}$ and $2 \underline{\mathbf{rt}}=\lambda \underline{\mathbf{t}}+\mu \underline{\mathbf{r}}
$.
\end{claim}

\begin{proof}
We only prove the claim for $X=\SDG$. Note that 
\[\begin{array}{ll}
     &(\Delta_R*\Delta_R)(i)+(\Delta_T*\Delta_{-T})(i)=|R\cap (i-R)|+|T\cap (i+T)|  \\[2mm]
     =&(k\Delta_0+\lambda\Delta_R+\mu\Delta_{R_2})(i)=(k\Delta_0+\lambda\Delta_R+\mu\Delta_{T})(i), 
\end{array}\] and
\[\begin{array}{ll}
     &2(\Delta_R*\Delta_T)(i)=|R\cap (i-T)|+|T\cap (i-R)|=|R\cap (i+(n-1)T)|+|T\cap (i+(n-1)R)|  \\[2mm]
     =&(\lambda \Delta_T+\mu \Delta_{T_2})(i)=(\lambda \Delta_T+\mu \Delta_{R})(i). 
\end{array}\] As $T=-T$, we have $\underline{\mathbf{r}}^2+\underline{\mathbf{t}}^2=\underline{\mathbf{r}}^2+|\underline{\mathbf{t}}|^2=k+\lambda \underline{\mathbf{r}}+\mu \underline{\mathbf{t}}$ and $2 \underline{\mathbf{rt}}=\lambda \underline{\mathbf{t}}+\mu \underline{\mathbf{r}}
$.
\end{proof}

Let $\underline{\mathbf{x}}=\underline{\mathbf{r}}-\underline{\mathbf{t}}$.  According to Claim \ref{claim::2}, we have $\underline{\mathbf{x}}^2-(\lambda-\mu)\underline{\mathbf{x}}-k=0$, implying that $\underline{\mathbf{x}}(z)\in\{\theta_1,\theta_3\}$  for all $z\in \mathbb{Z}_{2n}$, where $\theta_1$ and $\theta_3$ are given in \eqref{equ::3}. In particular, $\underline{\mathbf{x}}(0)=|R|-|T|\in\{\theta_1,\theta_3\}$, and hence $\theta_1$ and $\theta_3$ are integers. Moreover, if $|R|>|T|$ then $\underline{\mathbf{x}}(0)=\theta_1$, and if $|R|<|T|$ then $\underline{\mathbf{x}}(0)=\theta_3$. 
By Claim \ref{claim::1}, we have  $X_2=\mathrm{SD}(n,R_2,T_2)=\mathrm{SD}(n,T,R)$ (resp. $X_2=\mathrm{PSD}(n,R_2,T_2)=\mathrm{PSD}(n,T,R)$), which is also an antipodal non-bipartite distance-regular graph with the same parameters as $X$. Therefore, we may always assume $|R|>|T|$ and $|R|-|T|=\theta_1$. Let $B=\{z\in\mathbb{Z}_{2n}\mid \underline{\mathbf{x}}(z)=\theta_1\}$ and $C=\{z\in\mathbb{Z}_{2n}\mid \underline{\mathbf{x}}(z)=\theta_3\}$. By Claim \ref{claim::1},  $\underline{\mathbf{t}}=2n\Delta_0-1-\underline{\mathbf{r}}$. Thus we have
\[\underline{\mathbf{r}}(z)=\left\{\begin{array}{ll}
|R|, & z=0, \\
\frac{\theta_1-1}{2}, & z \in B, \\
\frac{\theta_3-1}{2}, & z \in C,
\end{array} 
~\textrm{and}~\underline{\mathbf{t}}(z)= \begin{cases}|T|, & z=0, \\
\frac{-\theta_1-1}{2}, & z \in B, \\
\frac{-\theta_3-1}{2}, & z \in C,\end{cases}\right.
\]
or equivalently, 
\begin{equation}\label{equ::1}
\underline{\mathbf{r}}=|R|\Delta_0+\frac{\theta_1-1}{2}\Delta_B+\frac{\theta_3-1}{2}\Delta_C\textrm{ and }    \underline{\mathbf{t}}=|T|\Delta_0+\frac{-\theta_1-1}{2}\Delta_B+\frac{-\theta_3-1}{2}\Delta_C.
\end{equation}
Applying the Fourier transformation on both sides of the equations in \eqref{equ::1}, and using \eqref{equ::1} and \eqref{equ::2}, we can deduce that
\begin{equation}\label{equ::5}
    \left\{\begin{array}{l}
     \mathcal{F}(\Delta_B)=|B|\Delta_0+(\frac{n}{\delta}-1)\Delta_R-(\frac{n}{\delta}+1)\Delta_T,  \\[2mm]
     \mathcal{F}(\Delta_C)=|C|\Delta_0-\frac{n}{\delta} \Delta_R+\frac{n}{\delta} \Delta_T,
\end{array}\right.
\end{equation}
where $\delta$ is given in \eqref{equ::3}. Since $\textrm{Im}(\mathcal{F}(\Delta_B)),  \textrm{Im}(\mathcal{F}(\Delta_C))\in \mathbb{Q}$, Lemma \ref{lem::2.13} indicates that both $B$ and $C$ are the union of some orbits of $\mathbb{Z}_{2n}^*$ acting on $\mathbb{Z}_{2n}$ by multiplication, and hence $B=-B$ and $C=-C$. Furthermore, for any $i\in\mathbb{Z}_{2n}$, we have
\[\begin{aligned}
     |C\cap (i-C)|&=(\Delta_C*\Delta_C)(i)\\
     &=\frac{1}{2n}\mathcal{F}\left(\mathcal{F}(\Delta_C*\Delta_C)\right)(-i)\\
     &=\frac{1}{2n}\mathcal{F}\left(\mathcal{F}(\Delta_C)^2\right)(-i)\\
     &=\frac{1}{2n}\mathcal{F}\left(|C|^2\Delta_0+\frac{n^2}{\delta^2}(\Delta_R+\Delta_T)\right)(-i)\\
     &=\frac{1}{2n}\mathcal{F}\left(|C|^2\Delta_0+\frac{n^2}{\delta^2}\Delta_{\mathbb{Z}_{2n}\setminus\{0\}}\right)(-i)\\
     &=\frac{1}{2n}(|C|^2-\frac{n^2}{\delta^2})+\frac{n^2}{\delta^2}\Delta_0(i).
\end{aligned}
\]
This implies that $\Cay(\mathbb{Z}_{2n},C)$ is a strongly regular circulant graph with parameters $(2n,|C|,\lambda',\mu')$ where $\lambda'=\mu'=\frac{1}{2n}(|C|^2-\frac{n^2}{\delta^2})$, or is isomorphic to $K_{2n}$ (if $C=\mathbb{Z}_{2n}\setminus\{0\}$) or $nK_2$ (if $C=\{n\}$). By Theorem \ref{thm::cir}, there are no strongly regular circulants with $\lambda'=\mu'$. If $\Cay(\mathbb{Z}_{2n},C)\cong K_{2n}$, then $C=\mathbb{Z}_{2n}\setminus\{0\}$, and $B=\emptyset$, contrary to \eqref{equ::5}. If $\Cay(\mathbb{Z}_{2n},C)\cong nK_{2}$, then $C=\{n\}$, and thus $\mathcal{F}(\Delta_C)=2\Delta_{2\mathbb{Z}_{2n}}-1$. Combining this with \eqref{equ::5}, we can deduce that   $R=2\mathbb{Z}_{2n}+1$ and $T=2\mathbb{Z}_{2n}$. Therefore, $X$ is bipartite, contrary to our assumption.

{\flushleft\bf Case 2.} $\mathcal{N}_3=\{\rho^n\}$.

In this case, we have 
\[N(\rho^n)=\rho^n(\rho^R\cup \rho^T\tau)=\rho^{n+R}\cup \rho^{n+T}\tau.\]
Therefore, $R_2=n+R$. Note that $\frac{n}{2}\in R\cup R_2=\mathbb{Z}_{2n}\setminus\{0,n\}$. If $\frac{n}{2}\in R$, then $-\frac{n}{2}=n+\frac{n}{2}\in R_2$, contrary to   $-\frac{n}{2}\in -R=R$. If $\frac{n}{2}\in R_2$, then $-\frac{n}{2}=n+\frac{n}{2}\in R$, and hence $\frac{n}{2}\in -R=R$, a contradiction.

The proof is completed.
\end{proof}

\begin{lemma}\label{lem::3.8}
Let $n=2^r>4$, and let $X=\SDG$ be a semi-dihedrant of order $4n$ with valency at least $3$. Then $X$ is an antipodal bipartite distance-regular graph  with diameter $4$ if and only if $R=-R\subseteq 2\mathbb{Z}_{2n}+1$, $T\subseteq 2\mathbb{Z}_{2n}$, $R\cap (n+R)=T\cap (n+T)=\emptyset$, $|R|=|T|=n/2$, and $|R\cap (i+R)|+|T\cap (i+T)|=n/2$ for all $i\in 2\mathbb{Z}_{2n}\setminus\{0,n\}$.
\end{lemma}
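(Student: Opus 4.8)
I would prove the equivalence directly, the forward implication being the substantial part. Assume $X=\SDG$ is an antipodal bipartite distance-regular graph of diameter $4$ with $k\ge 3$. By Lemma~\ref{lem::2.3}(ii) it is a $p$-fold antipodal cover with $4n=2p^2\mu$, $k=p\mu$, so $p$ is a power of $2$ with $p\ge 2$, and the antipodal class $H_0=\mathcal N_0\cup\mathcal N_4$ of $1$ is a subgroup of $\SD$ of order $p$ (Lemma~\ref{lem::2.9}). First I would use bipartiteness: $S=\rho^R\cup\rho^T\tau$ lies in the unique non-identity coset of an index-$2$ subgroup $H$ (the even part), and since $\SD$ has exactly the three index-$2$ subgroups $\langle\rho\rangle\cong\mathbb Z_{2n}$, $\langle\rho^2,\tau\rangle\cong D_n$ and $\langle\rho^2,\rho\tau\rangle$ (generalized quaternion), this splits into: (a) $R=\emptyset$, $H=\langle\rho\rangle$; (b) $R,T\subseteq 2\mathbb Z_{2n}+1$, $H=\langle\rho^2,\tau\rangle$; (c) $R\subseteq 2\mathbb Z_{2n}+1$ and $T\subseteq 2\mathbb Z_{2n}$, $H=\langle\rho^2,\rho\tau\rangle$. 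Note $\rho^n\in H$ and $H_0\subseteq H$ throughout.

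\textbf{Step 1: show $p=2$.} Since $\rho^n$ is the unique involution of $\langle\rho\rangle$, $H_0\cap\langle\rho\rangle\ne\{1\}$ forces $\rho^n\in H_0$. If $\rho^n\in H_0$ and $p\ge 4$, I would quotient by $Z=\langle\rho^n\rangle$: $Z$ is normal in $\SD$, $Z\le H$, $Z\subsetneq H_0$, so the $Z$-coset partition is an equitable imprimitivity system with blocks strictly inside fibres, and Lemma~\ref{lem::2.2} makes $X_{\mathcal B_Z}$ an antipodal distance-regular graph of diameter $4$, which by Lemmas~\ref{lem::2.9}(i) and~\ref{lem::3.1} is the dihedrant $\Cay(D_n,S/Z)$ on $2n>8$ vertices and is bipartite (the bipartition descends) --- contradicting Lemma~\ref{lem::3.5}. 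If instead $H_0\cap\langle\rho\rangle=\{1\}$, then $|H_0\langle\rho\rangle|=2np\le 4n$ forces $p=2$. So $p=2$ always; hence $\mu=n/2$, $k=n$, the intersection array is $\{n,n-1,n/2,1;1,n/2,n-1,n\}$, and $H_0=\langle\rho^n\rangle$ or $H_0=\{1,\rho^c\tau\}$ with $c$ even.

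\textbf{Step 2: eliminate (a), (b) and read off (c).} By Lemma~\ref{lem::2.11}, for $a\in S$ the set $D=a^{-1}S\subseteq H$ is a symmetric $(n,2,n,n/2)$-relative difference set relative to $H_0$ in $H$. Case (a) would put this relative difference set in the cyclic $2$-group $\mathbb Z_{2n}$, contradicting Lemma~\ref{lem::2.12}. In case (b), $T\subseteq 2\mathbb Z_{2n}+1$ forces $T=(n+1)T=n+T$: if $H_0=\langle\rho^n\rangle$ then $\mathcal N_3=N(\rho^n)=\rho^{n+R}\cup\rho^{n+T}\tau$ and $\mathcal N_1\cap\mathcal N_3=\emptyset$ give $T\cap(n+T)=\emptyset$, impossible; if $H_0=\{1,\rho^c\tau\}$ with $c$ even (which does lie in $H$ here) I would normalize $c=0$ by Lemma~\ref{lem::3.4} and derive from the $c_2=n/2$ equations at the distance-$2$ vertices $\rho^\ell\tau$ ($\ell$ even, $\ell\ne0$) that $(\Delta_R*\Delta_T)(0)=n/4$, contradicting $(\Delta_R*\Delta_T)(0)=|R\cap(-T)|=0$, the latter from $\mathcal N_3=N(\tau)=\rho^{-T}\cup\rho^{n-R}\tau$ and $\mathcal N_1\cap\mathcal N_3=\emptyset$. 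Thus case (c) holds, and since the quaternion group $H=\langle\rho^2,\rho\tau\rangle$ has a unique involution, $H_0=\langle\rho^n\rangle$; then $\mathcal N_3=N(\rho^n)=\rho^{n+R}\cup\rho^{n+T}\tau$ with $\mathcal N_1\sqcup\mathcal N_3=\SD\setminus H$ yields $R\cap(n+R)=T\cap(n+T)=\emptyset$ and $|R|=|T|=n/2$, and $c_2=n/2$ at the distance-$2$ vertices $\rho^i$ ($i$ even, $i\notin\{0,n\}$) yields $|R\cap(i+R)|+|T\cap(i+T)|=n/2$ --- exactly the stated conditions.

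\textbf{Step 3: the converse, and the main obstacle.} Given the conditions I would set $H=\langle\rho^2,\rho\tau\rangle$, $N=\langle\rho^n\rangle$, pick $r_0\in R$, $a=\rho^{r_0}$, $D=a^{-1}S=\rho^{R-r_0}\cup\rho^{T-r_0}\tau\subseteq H$, and verify $\underline D\,\underline D^{(-1)}=n\cdot1+\tfrac n2\underline{H\setminus N}$ in $\mathbb Z\SD$ (via Lemma~\ref{lem::3.3}) and $D^{(-1)}=aDa$; the $\langle\rho\rangle$-part of the relative-difference-set identity is immediate from the hypotheses, while its $\rho^{\mathrm{odd}}\tau$-part needs $(\Delta_R*\Delta_T)(m)+(\Delta_R*\Delta_T)(m+n)=n/2$ for odd $m$. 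This last I would get via the Fourier transformation: the hypotheses rephrase as $\Delta_R*\Delta_R+\Delta_T*\Delta_{-T}=\tfrac n2\Delta_{2\mathbb Z_{2n}}+\tfrac n2\Delta_0-\tfrac n2\Delta_n$, so applying $\mathcal F$ and using $R=-R$ (whence $\mathcal F\Delta_R$ is real and $\mathcal F(\Delta_T*\Delta_{-T})=|\mathcal F\Delta_T|^2$) forces $\mathcal F\Delta_R(z)=0$ for even $z\notin\{0,n\}$; since $\mathcal F\Delta_{T\cup(n+T)}$ vanishes at odd $z$ ($T\cup(n+T)$ being $+n$-invariant), $\mathcal F\Delta_R\cdot\mathcal F\Delta_{T\cup(n+T)}$ is supported on $\{0,n\}$, and evaluating there and inverting with \eqref{equ::1} and \eqref{equ::2} gives $(\Delta_R*\Delta_{T\cup(n+T)})(m)=n/2$ for odd $m$, i.e. the desired identity. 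Then Lemma~\ref{lem::2.11} delivers that $X$ is an antipodal bipartite distance-regular graph of diameter $4$ (a Hadamard graph) of valency $n\ge3$ with the claimed intersection array. I expect the main obstacle to be Step 1 together with the case (b) elimination in Step 2: pinning down the covering index and excluding the two ``wrong'' bipartition subgroups requires combining the dihedrant-quotient reduction, Hirasaka's theorem, and a careful count of distance-$2$ vertices; the Fourier verification in Step 3 is intricate but routine once the reformulation is in hand.
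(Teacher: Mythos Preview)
Your overall architecture matches the paper's: reduce to $p=2$ via a quotient-by-$\langle\rho^n\rangle$ argument and Lemma~\ref{lem::2.2}/\ref{lem::3.5}, then split on the index-$2$ subgroup $H$ and the antipodal class $H_0$, and read off the conditions in the surviving case. Two points deserve comment.

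\emph{Case (b) with $H_0=\{1,\tau\}$.} Your claimed intermediate value ``$(\Delta_R*\Delta_T)(0)=n/4$'' is not what the $c_2$-equations actually give. Writing $f(\ell)=(\Delta_R*\Delta_T)(\ell)=|R\cap(\ell-T)|$, the identity $|N(1)\cap N(\rho^\ell\tau)|=n/2$ for even $\ell\ne 0$ yields only $f(\ell)+f(\ell+n)=n/2$; in particular $f(0)+f(n)=n/2$ from $\ell=n$. The contradiction then comes from using \emph{both} parts of $\mathcal N_1\cap\mathcal N_3=\emptyset$: besides $R\cap(-T)=\emptyset$ (so $f(0)=0$) you also need $T\cap(n-1)R=T\cap(n+R)=\emptyset$, which gives $f(n)=0$, and hence $0=n/2$. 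So the idea is right but the stated equality should be corrected. (The paper handles this subcase differently, via the Fourier identity $(\underline{\mathbf r}-\underline{\mathbf t})^2=n$ and the observation that $\underline{\mathbf r}(n/2)$ is purely imaginary when $R\subseteq 2\mathbb Z_{2n}+1$.) For the other subcase of (b), $H_0=\langle\rho^n\rangle$, the paper simply notes $(\rho^{2l+1}\tau)^2=\rho^n$ for $2l+1\in T$, forcing $\rho^n\in\mathcal N_2$; your $T=n+T$ argument is an equally valid alternative.

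\emph{The converse.} The paper verifies the intersection array by a direct combinatorial count (computing $|N(1)\cap N(\rho^i)|$ and $|N(1)\cap N(\rho^j\tau)|$ and identifying $R_2,T_2,R_3,T_3,\mathcal N_4$ explicitly). Your route via Lemma~\ref{lem::2.11} and the Fourier identity $(\Delta_R*\Delta_{T\cup(n+T)})(m)=n/2$ for odd $m$ is different but correct, and indeed the paper's own Remark after Lemma~\ref{lem::3.8} records that the stated conditions are equivalent to the relative-difference-set condition you are invoking. Your Fourier computation (support of $\mathcal F\Delta_R\cdot\mathcal F\Delta_{T\cup(n+T)}$ on $\{0,n\}$) is sound; just make sure to also check $D^{(-1)}=aDa$ explicitly, which here reduces to $R=-R$ and $(n+1)T=T$ (automatic for $T\subseteq 2\mathbb Z_{2n}$).
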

\begin{proof}
First we consider the necessity. Assume that $X=\SDG$ is an antipodal bipartite distance-regular semi-dihedrant of diameter $d=4$ and valency $k\geq 3$. Let $p$ ($p\geq 2$) denote the common size of antipodal classes (or fibres) of $X$. By Lemma \ref{lem::2.3} (ii), we have $4n=2p^2\mu$, $k=p\mu$, and $X$ has the intersection array $\{p\mu, p\mu-1,(p-1)\mu, 1;1,\mu,p\mu-1,p\mu\}$. As $p$ is a divisor of $4n=2^{r+2}$, $k$ is even. Thus $k\geq 4$ and  $p\leq n/2$. Similarly as in the proof of Lemma \ref{lem::3.7}, we can deduce that $p=2$. Thus $k=n$, $\mu=n/2$, and  $\mathcal{N}_4=\{\rho^c\tau\}$ for some $c\in 2\mathbb{Z}_{2n}$ or $\mathcal{N}_4=\{\rho^n\}$.  Furthermore, since $\mathcal{N}_0\cup \mathcal{N}_2\cup \mathcal{N}_4$ is a subgroup of index $2$ in $\SD$, we have $\mathcal{N}_0\cup \mathcal{N}_2\cup \mathcal{N}_4=\langle \rho\rangle$, $\langle \rho^2,\tau\rangle$, or $\langle \rho^2,\rho\tau\rangle$. 
Then it suffices to consider the following four situations. 

{\flushleft\bf Case 1.} $\mathcal{N}_0\cup \mathcal{N}_2\cup \mathcal{N}_4=\langle \rho^2, \tau\rangle$, and $\mathcal{N}_4=\{\rho^c\tau\}$ for some $c\in 2\mathbb{Z}_{2n}$. 

By Lemma \ref{lem::3.4}, we may assume that $\mathcal{N}_4=\{\tau\}$. Then $R_2=T_2=2\mathbb{Z}_{2n}\setminus\{0\}$, and so $\underline{\mathbf{r}}_2=\underline{\mathbf{t}}_2=n\Delta_{n\mathbb{Z}_{2n}}-1$. Moreover, from $\rho^{R_3}\cup \rho^{T_3}\tau=\mathcal{N}_3=N(\tau)=\rho^{(n-1)T}\cup \rho^{(n-1)R}\tau=\rho^{-T}\cup \rho^{(n-1)R}\tau$ we can deduce that $R_3=-T$. Since both $R\cup R_3$ and $T\cup T_3$ are partitions of $2\mathbb{Z}_{2n}+1$ and $R=-R$, we have $-T=R_3=-R_3=T$, and thus $T_3=R$.   Therefore, $\mathrm{Im}(\underline{\mathbf{t}})\subseteq\mathbb{R}$. Then, as in Case 1 of Lemma \ref{lem::3.7}, we have
$$
\underline{\mathbf{r}}^2+\underline{\mathbf{t}}^2=k+\mu \underline{\mathbf{t}_2}=\frac{n^2}{2}\Delta_{n\mathbb{Z}_{2n}}+\frac{n}{2}~\mbox{and}~2 \underline{\mathbf{rt}}=\mu \underline{\mathbf{t}_2}=\frac{n^2}{2}\Delta_{n\mathbb{Z}_{2n}}-\frac{n}{2}.
$$
This implies that $\underline{\mathbf{r}}(n/2)\in\{\sqrt{n}/2,-\sqrt{n}/2\}$. However, since $R\subseteq 2\mathbb{Z}_{2n}+1$, by Lemma \ref{lem::2.14}, the real part of $\underline{\mathbf{r}}(n/2)$ is $0$, which is a contradiction.

{\flushleft\bf Case 2.} $\mathcal{N}_0\cup \mathcal{N}_2\cup \mathcal{N}_4=\langle \rho^2, \tau\rangle$, and $\mathcal{N}_4=\{\rho^n\}$. 

In this situation, we have $R_2=2\mathbb{Z}_{2n}\setminus \{0,n\}$ and $T_2=2\mathbb{Z}_{2n}$, and hence $R,T\subseteq 2\mathbb{Z}_{2n}+1$. Take $2l+1\in T$. As  $\rho^{2l+1}\tau\cdot \rho^{2l+1}\tau=\rho^n$, we assert that $n\in R_2$, which is a contradiction.

{\flushleft\bf Case 3.} $\mathcal{N}_0\cup \mathcal{N}_2\cup \mathcal{N}_4=\langle \rho\rangle$, and $\mathcal{N}_4=\{\rho^n\}$. 

In this situation, $R=\emptyset$, and Lemma \ref{lem::2.11} indicates that $\tau\rho^T\tau=\rho^{(n-1)T}=\rho^{-T}$ is a symmetric $(n,2,n,\frac{n}{2})$-relative difference set relative to $\langle\rho^n\rangle$ in $\langle\rho\rangle$. However, by  Lemma \ref{lem::2.12}, this is impossible.

{\flushleft\bf Case 4.} $\mathcal{N}_0\cup \mathcal{N}_2\cup \mathcal{N}_4=\langle \rho^2, \rho\tau\rangle$, and $\mathcal{N}_4=\{\rho^n\}$. 

In this situation, $R_2=2\mathbb{Z}_{2n}\setminus\{0,n\}$ and $T_2=2\mathbb{Z}_{2n}+1$. Then $R\cup R_3$ and $T\cup T_3$ are partitions of  $2\mathbb{Z}_{2n}+1$ and $2\mathbb{Z}_{2n}$, respectively. Furthermore, from $\rho^{R_3}\cup \rho^{T_3}\tau=\mathcal{N}_3=N(\rho^n)=\rho^{n+R}\cup \rho^{n+T}\tau$ we can deduce that $R_3=n+R$ and $T_3=n+T$. Therefore, $|R|=|R_3|=|T|=|T_3|=\frac{n}{2}$, and $R\cap (n+R)=T\cap (n+T)=\emptyset$. For any $i\in R_2=2\mathbb{Z}_{2n}\setminus\{0,n\}$, since $\rho^i\in \mathcal{N}_2$ and $X$ is a distance-regular graph with intersection array $\{n,n-1,n/2,1; 1, n/2, n-1,n\}$, we have 
$|N(1)\cap N(\rho^i)|=|(\rho^R\cup \rho^T\tau)\cap (\rho^{i+R}\cup \rho^{i+T}\tau)|=|R\cap (i+R)|+|T\cap (i+T)|=\mu=n/2$, as required.

Now we consider the sufficiency. Assume that  $R\subseteq 2\mathbb{Z}_{2n}+1$, $T\subseteq 2\mathbb{Z}_{2n}$, $R\cap (n+R)=T\cap (n+T)=\emptyset$, $|R|=|T|=n/2$, and $|R\cap (i+R)|+|T\cap (i+T)|=n/2$ for all $i\in 2\mathbb{Z}_{2n}\setminus\{0,n\}$. Then $|N(1)\cap N(\rho^i)|=n/2$ for any $i\in 2\mathbb{Z}_{2n}\setminus\{0,n\}$ according to the above arguments, and it follows that $2\mathbb{Z}_{2n}\setminus\{0,n\}\subseteq R_2$. On the other hand, by the assumption, we see that  $R_2\subseteq 2\mathbb{Z}_{2n}$. Since    $|N(1)\cap N(\rho^n)|=|R\cap (n+R)|+|T\cap (n+T)|=0$, we have   $n\not\in R_2$, and hence $R_2=2\mathbb{Z}_{2n}\setminus\{0,n\}$. Moreover, for any $i\in 2\mathbb{Z}_{2n}+1$, we have $|N(1)\cap N(\rho^i\tau)|=|(\rho^R\cup \rho^T\tau)\cap (\rho^{i+(n-1)R}\tau\cup \rho^{i+(n-1)T)})|=|R\cap (i+(n-1)T)|+|T\cap (i+(n-1)R)|=|R\cap (i-T)|+|T\cap (i+n-R)|=|T\cap (i-R)|+|T\cap (i+n-R)|=|T\cap (i+R)|+|T\cap (i+n+R)|=|T\cap ((i+R)\cup (i+n+R))|=|T\cap (i+2\mathbb{Z}_{2n}+1)|=|T\cap 2\mathbb{Z}_{2n}|=|T|=n/2$, and it follows that $2\mathbb{Z}_{2n}+1\subseteq T_2$. Also, by the assumption, we can deduce that $T_2\subseteq 2\mathbb{Z}_{2n}+1$, and hence  $T_2=2\mathbb{Z}_{2n}+1$. Furthermore, since $R\subseteq 2\mathbb{Z}_{2n}+1$, 
$T\subseteq 2\mathbb{Z}_{2n}$, $R_2=2\mathbb{Z}_{2n}\setminus\{0,n\}$ and $T_2=2\mathbb{Z}_{2n}+1$, we have  $R_3\subseteq 2\mathbb{Z}_{2n}+1$ and $T_3\subseteq 2\mathbb{Z}_{2n}$. Then  $\rho^n\in \mathcal{N}_4$ because $n/2\in R_2$ and  $\rho^n=\rho^{n/2}\cdot \rho^{n/2}$. Note that $N(\rho^n)=\rho^{n+R}\cup \rho^{n+T}\tau$. For any $i\in R$,  there exists some $j\in R$ such that $j-i\not\in\{0,n\}$ because $|R|=n/2>2$ and $R\cap (n+R)=\emptyset$. Take $j_i=n-(j-i)$, we have $j_i\in R_2=2\mathbb{Z}_{2n}\setminus\{0,n\}$. Then $\rho^{j_i}\cdot \rho^j=\rho^{n+i}$ indicates that $\rho^{n+i}\in N(\rho^{j_i})$, and hence $\rho^{n+i}\in \mathcal{N}_3$, i.e., $n+i\in R_3$. By the arbitrariness of $i\in R$, we obtain $n+R\subseteq R_3$. Since $|R|=n/2$, $R\cap (n+R)=\emptyset$ and $R\subseteq 2\mathbb{Z}_{2n}+1$, we have $R\cup (n+R)=2\mathbb{Z}_{2n}+1$.  Combining this with $n+R\subseteq R_3\subseteq 2\mathbb{Z}_{2n}+1$ and $R\cap R_3=\emptyset$, we assert that $R_3=n+R$. Similarly, we can deduce that $T_3=n+T$. Thus we may conclude that $\mathcal{N}_3=\rho^{n+R}\cup \rho^{n+T}\tau$, $\mathcal{N}_4=\{\rho^n\}$,  $\mathcal{N}_0\cup \mathcal{N}_2 \cup \mathcal{N}_4=\langle\rho^2,\rho\tau\rangle$, and $X=\SDG$ is  a bipartite graph with  diameter $4$. Furthermore, by above arguments,  we have $|N(1)\cap N(\rho^i)|=|N(1)\cap N(\rho^j\tau)|=n/2$ for any $i\in R_2=2\mathbb{Z}_{2n}\setminus\{0,n\}$ and  $j\in T_2=2\mathbb{Z}_{2n}+1$. Combining this with the fact that $X$ is bipartite and vertex-transitive, it is routine to verify that  $X=\SDG$ is  distance-regular  with intersection array $\{n,n-1,n/2,1; 1, n/2, n-1,n\}$.

This completes the proof.
\end{proof}

\begin{remark}
\rm For $n=8$ (resp. $n=32$), by using computer search, we find that $R=\{5,7, 9,11\}$ and $T=\{4,8,10,14\}$ (resp. $R=\{9, 11, 15, 19, 25, 27,29, 31, 33, 35, 37, 39, 45, 49, 53, 55\}$ and $T=\{8, 12, 14, 22, 24, 30, 34, 36, 38, 42, 48, 50, 52, 58, 60, 64\}$) are a pair of sets satisfying the sufficient condition in Lemma \ref{lem::3.8}. This implies the existence of an antipodal bipartite distance-regular semi-dihedrant of order $32$ (resp. $128$) with diameter $4$. However, for $n=16$, there are no such pairs.
\end{remark}

\begin{remark}
\rm According to Lemma \ref{lem::2.11}, it is easy to see that the sufficient condition in Lemma \ref{lem::3.8} is exactly equivalent to the condition that 
$\rho^{-1}(\rho^R \cup \rho^{T}\tau)=\rho^{-1+R} \cup \rho^{-1+T}\tau$ is a symmetric $(n,2,n,n/2)$-relative difference set relative to $\langle\rho^n\rangle$ in the group $\langle\rho^2,\rho\tau\rangle$. 
\end{remark}

The following result is an analogue of Lemma \ref{lem::3.8} for pseudo-semi-dihedrant graphs, and the proof is a little different.

\begin{lemma}\label{lem::3.9}
Let $n=2^r>2$, and let $X=\PSDG$ be a pseudo-semi-dihedrant of order $4n$ with valency at least $3$. Then $X$ is an antipodal bipartite distance-regular graph with diameter $4$ if and only if $R=-R$ and $T=(n-1)T=n-T$ are subsets of $2\mathbb{Z}_{2n}+1$ such that $R\cap (n+R)=T\cap (n+T)=\emptyset$, $|R|=|T|=n/2$, and $|R\cap (i+R)|+|T\cap (i+T)|=n/2$ for all $i\in 2\mathbb{Z}_{2n}\setminus\{0,n\}$.
\end{lemma}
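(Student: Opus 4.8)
The plan is to mimic the proof of Lemma~\ref{lem::3.8}, exploiting a structural simplification special to $\PSD$: among its three index-$2$ subgroups $\langle\rho\rangle$, $\langle\rho^2,\tau\rangle$ and $\langle\rho^2,\rho\tau\rangle=\langle\rho\tau\rangle$, the first and the third are cyclic of order $2n$, and only $\langle\rho^2,\tau\rangle\cong\mathbb{Z}_n\oplus\mathbb{Z}_2$ is non-cyclic. For the necessity, suppose $X=\PSDG$ is an antipodal bipartite distance-regular graph of diameter $4$ with valency $k\ge 3$, and let $p\ge 2$ be the common size of the fibres. By Lemma~\ref{lem::2.3}(ii), $4n=2p^2\mu$, $k=p\mu$, and $X$ has intersection array $\{p\mu,p\mu-1,(p-1)\mu,1;1,\mu,p\mu-1,p\mu\}$; since $p\mid 4n=2^{r+2}$ and $k\ge 3$, one gets $k=2n/p\ge 4$ and $p\le n/2$. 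Arguing exactly as in the proof of Lemma~\ref{lem::3.7} (if $p$ is not prime, pass to the quotient of $X$ by the orbits of a nontrivial subgroup $K\le\langle\rho\rangle$ of the fibre through $1$; by Lemmas~\ref{lem::2.2}, \ref{lem::2.9} and \ref{lem::3.2} this quotient is an antipodal distance-regular Cayley graph over some $\mathbb{Z}_t\oplus\mathbb{Z}_2$ of diameter $4$, contradicting Lemma~\ref{lem::3.6}), we reduce to $p=2$, so $k=n$, $\mu=n/2$, and $\mathcal{N}_4=\{x\}$ for a single involution $x$.

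Next, applying Lemma~\ref{lem::2.11} with $r=p=2$, the bipartition half $H:=\mathcal{N}_0\cup\mathcal{N}_2\cup\mathcal{N}_4$ is an index-$2$ subgroup of $\PSD$ containing a symmetric $(n,2,n,n/2)$-relative difference set, so by Lemma~\ref{lem::2.12} a Sylow $2$-subgroup of $H$ is non-cyclic; since $H$ is a $2$-group this forces $H=\langle\rho^2,\tau\rangle$. Consequently $\mathcal{N}_1\cup\mathcal{N}_3$ is the non-trivial coset $\rho^{2\mathbb{Z}_{2n}+1}\cup\rho^{2\mathbb{Z}_{2n}+1}\tau$, whence $R,T\subseteq 2\mathbb{Z}_{2n}+1$, $R_2,T_2\subseteq 2\mathbb{Z}_{2n}$, and $x$ is one of the three involutions $\rho^n,\tau,\rho^n\tau$ of $\langle\rho^2,\tau\rangle$. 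Since $\PSDG\cong\mathrm{PSD}(n,R,n+T)$ (Lemma~\ref{lem::3.4}) via the automorphism $\rho\mapsto\rho$, $\tau\mapsto\rho^n\tau$ of $\PSD$, which swaps $\tau$ and $\rho^n\tau$ and fixes $\rho^n$, we may assume $x\in\{\rho^n,\tau\}$. Using Lemma~\ref{lem::3.3}(ii) and the arithmetic fact $(n\pm1)t\equiv n\mp t\pmod{2n}$ for odd $t$: if $x=\tau$, then $\mathcal{N}_3=N(\tau)=\rho^{n+T}\cup\rho^{n+R}\tau$, so $R\sqcup(n+T)=T\sqcup(n+R)=2\mathbb{Z}_{2n}+1$; combining this with $R=-R$ and the standing identity $T=(n-1)T=n-T$ yields $R=n+R$ and $T=n+T$, and then $\rho^n\in\mathcal{N}_2$ gives $\mu=|N(1)\cap N(\rho^n)|=|R|+|T|=n$, contradicting $\mu=n/2$. (This is precisely where the argument deviates from Lemma~\ref{lem::3.8}, where the analogous subgroup is dihedral and the corresponding case is eliminated only by a Fourier-transform argument.) Hence $x=\rho^n$, and from $\mathcal{N}_3=N(\rho^n)=\rho^{n+R}\cup\rho^{n+T}\tau$ together with $R_2=2\mathbb{Z}_{2n}\setminus\{0,n\}$ and distance-regularity one reads off $R\cap(n+R)=T\cap(n+T)=\emptyset$, $|R|=|T|=n/2$, and $|R\cap(i+R)|+|T\cap(i+T)|=\mu=n/2$ for all $i\in 2\mathbb{Z}_{2n}\setminus\{0,n\}$ (the equality $T=n-T$ being automatic from $T=(n-1)T$ and $T\subseteq 2\mathbb{Z}_{2n}+1$); these are precisely the stated conditions.

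For the sufficiency, assume $R$ and $T$ satisfy the listed conditions. I would verify that $D:=\rho^{-1}S=\rho^{-1+R}\cup\rho^{-1+T}\tau\subseteq H:=\langle\rho^2,\tau\rangle$ (the inclusion holding because $-1+R,-1+T\subseteq 2\mathbb{Z}_{2n}$) is a symmetric $(n,2,n,n/2)$-relative difference set relative to $N:=\langle\rho^n\rangle$ in $H$ with $D^{(-1)}=\rho D\rho$, so that Lemma~\ref{lem::2.11} (the implication (iii)$\Rightarrow$(i), with $a=\rho$) shows $X$ is an antipodal bipartite distance-regular graph of diameter $4$ with intersection array $\{n,n-1,n/2,1;1,n/2,n-1,n\}$. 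Identifying $H\cong\mathbb{Z}_n\oplus\mathbb{Z}_2$ via $\rho^2\leftrightarrow(1,0)$, $\tau\leftrightarrow(0,1)$ and writing $R=2R'+1$, $T=2T'+1$ with $R',T'\subseteq\mathbb{Z}_n$, one has $D\leftrightarrow(R',0)\cup(T',1)$ and $N\leftrightarrow\{0,n/2\}\times\{0\}$, while the listed conditions (together with $R=-R$ and $T=(n-1)T$) translate after dividing indices by $2$ into $|R'|=|T'|=n/2$, $R'\cap(n/2+R')=T'\cap(n/2+T')=\emptyset$, $|R'\cap(i+R')|+|T'\cap(i+T')|=n/2$ for $i\in\mathbb{Z}_n\setminus\{0,n/2\}$, $R'=-1-R'$, and $T'=n/2-1-T'$. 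From these the relative difference set identity $\underline{D}\cdot\underline{D}^{(-1)}=n\cdot 1+\tfrac{n}{2}\cdot\underline{H\setminus N}$ follows: its ``diagonal'' part (differences of the form $(h,0)$, counted by $|R'\cap(h+R')|+|T'\cap(h+T')|$) is immediate, while its ``off-diagonal'' part (differences $(h,1)$, counted by $|R'\cap(h+T')|+|R'\cap(-h+T')|$) collapses, using $R'=-1-R'$, $T'=n/2-1-T'$ and the partition $T'\cup(n/2+T')=\mathbb{Z}_n$, to $|R'|=n/2$. Symmetry of $D$ is automatic since $N$ is normal in the abelian group $H$, and $D^{(-1)}=\rho D\rho$ follows from $-R=R$, $-T=n+T$ and $\tau\rho=\rho^{n+1}\tau$.

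The step I expect to be the main obstacle is this last verification that $D$ is a genuine relative difference set: the off-diagonal count forces one to combine the two distinct symmetries $R'=-1-R'$ and $T'=n/2-1-T'$ with the partition $T'\cup(n/2+T')=\mathbb{Z}_n$ in exactly the right way, and throughout both directions one must keep track of the identity $(n\pm1)t\equiv n\mp t\pmod{2n}$ for odd $t$, since a sign slip there lands one in the wrong index-$2$ subgroup or the wrong difference set.
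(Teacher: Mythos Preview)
Your proof is correct, and in several places it is cleaner than the paper's own argument, so it is worth recording the differences.

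For the necessity, the paper proceeds by listing the four possible pairs $(H,\mathcal{N}_4)$ with $H=\mathcal{N}_0\cup\mathcal{N}_2\cup\mathcal{N}_4$ an index-$2$ subgroup, and eliminates them one by one: the two cases with $H$ cyclic ($H=\langle\rho\rangle$ or $H=\langle\rho^2,\rho\tau\rangle=\langle\rho\tau\rangle$) are ruled out via Lemmas~\ref{lem::2.11} and \ref{lem::2.12}; the case $H=\langle\rho^2,\tau\rangle$, $\mathcal{N}_4=\{\rho^c\tau\}$ is ruled out by repeating verbatim the Fourier-transform computation from Case~1 of Lemma~\ref{lem::3.8}; and the remaining case $H=\langle\rho^2,\tau\rangle$, $\mathcal{N}_4=\{\rho^n\}$ yields the stated conditions. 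You instead invoke Lemmas~\ref{lem::2.11} and \ref{lem::2.12} once at the outset to force $H$ to be the unique non-cyclic index-$2$ subgroup $\langle\rho^2,\tau\rangle$, collapsing three of the paper's cases into one line. More significantly, for $\mathcal{N}_4=\{\tau\}$ you replace the Fourier argument by the elementary observation that $R=-R$ together with $T=n-T$ and the partitions $R\sqcup(n+T)=T\sqcup(n+R)=2\mathbb{Z}_{2n}+1$ force $R=n+R$ and $T=n+T$, whence $|N(1)\cap N(\rho^n)|=|R|+|T|=n\neq n/2=\mu$. This shortcut genuinely exploits the pseudo-semi-dihedral relation $T=(n-1)T$ (giving $T=n-T$ on odd $T$), which is why the analogous case in Lemma~\ref{lem::3.8} does require the Fourier machinery.

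For the sufficiency, the paper again mirrors the direct distance-by-distance verification of Case~4 of Lemma~\ref{lem::3.8}. Your route---checking that $D=\rho^{-1}S$ is an $(n,2,n,n/2)$-relative difference set in $\langle\rho^2,\tau\rangle\cong\mathbb{Z}_n\oplus\mathbb{Z}_2$ relative to $\langle\rho^n\rangle$ and then invoking Lemma~\ref{lem::2.11}---is equivalent in content (cf.\ the remark after Lemma~\ref{lem::3.9}) but more transparent, and your computation of the off-diagonal coefficients via $R'=-1-R'$, $T'=n/2-1-T'$ and the partition $T'\sqcup(n/2+T')=\mathbb{Z}_n$ is correct. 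Either approach yields the same intersection array.
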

\begin{proof}
Assume that $X=\PSDG$ is an antipodal bipartite distance-regular semi-dihedrant of diameter $d=4$ and valency $k\geq 3$. Let $p$ ($p\geq 2$) denote the common size of antipodal classes of $X$. By a similar discussion as in the proof of Lemma \ref{lem::3.8}, we obtain $p=2$, $k=n$, $\mu=n/2$, and  $\mathcal{N}_4=\{\rho^c\tau\}$ for $c\in\{0,n\}$ or $\mathcal{N}_4=\{\rho^n\}$.  Furthermore, since $\mathcal{N}_0\cup \mathcal{N}_2\cup \mathcal{N}_4$ is a subgroup of index $2$ in $\PSD$, we have $\mathcal{N}_0\cup \mathcal{N}_2\cup \mathcal{N}_4=\langle \rho\rangle$, $\langle \rho^2,\tau\rangle$ or $\langle \rho^2,\rho\tau\rangle$. It suffices to consider the following four situations. 

{\flushleft\bf Case 1.} $\mathcal{N}_0\cup \mathcal{N}_2\cup \mathcal{N}_4=\langle \rho^2,\tau\rangle$, and $\mathcal{N}_4=\{\rho^c\tau\}$ for $c\in\{0,n\}$. 

In this situation, by using the same method as in Case 1 of the proof of Lemma \ref{lem::3.8}, we can deduce a contradiction.

{\flushleft\bf Case 2.} $\mathcal{N}_0\cup \mathcal{N}_2\cup \mathcal{N}_4=\langle \rho\rangle$, and $\mathcal{N}_4=\{\rho^n\}$.

In this situation, $R=\emptyset$, and Lemma \ref{lem::2.11} indicates that $\tau\rho^T\tau=\rho^{(n+1)T}=\rho^{-T}$ is a symmetric $(n,2,n,\frac{n}{2})$-relative difference set relative to $\langle\rho^n\rangle$ in $\langle\rho\rangle$, which is impossible by Lemma \ref{lem::2.12}.

{\flushleft\bf Case 3.} $\mathcal{N}_0\cup \mathcal{N}_2\cup \mathcal{N}_4=\langle \rho^2,\rho\tau\rangle$, and $\mathcal{N}_4=\{\rho^n\}$.

Note that $\rho^{n+2}= (\rho\tau)^2\in\langle \rho\tau\rangle$ and  $\rho^4=(\rho\tau)^4\in\langle\rho\tau \rangle$. As $4\mid n$, we have $\rho^n\in\langle\rho\tau\rangle$, and hence  $\rho^2\in\langle\rho\tau\rangle$. Therefore, $\langle\rho^2,\rho\tau\rangle=\langle\rho\tau\rangle$ is a cyclic group of order $2n$. By  Lemma \ref{lem::2.11},  $\rho^{-1}(\rho^R \cup \rho^{T}\tau)=\rho^{-1+R} \cup \rho^{-1+T}\tau$ is a symmetric $(n,2,n,n/2)$-relative difference set relative to $\langle \rho^n \rangle $ in the cyclic group $\langle \rho\tau\rangle$, which contradicts Lemma \ref{lem::2.12}.

{\flushleft\bf Case 4.} 
 $\mathcal{N}_0\cup \mathcal{N}_2\cup \mathcal{N}_4=\langle \rho^2, \tau\rangle$, and $\mathcal{N}_4=\{\rho^n\}$. 
 
 In this situation, we have $R_2=2\mathbb{Z}_{2n}\setminus\{0,n\}$, $T_2=2\mathbb{Z}_{2n}$, and $R,T\subseteq 2\mathbb{Z}_{2n}+1$. Note that $R=-R$ and $T=(n-1)T=n-T$. Then both $R\cup R_3$ and $T\cup T_3$ are partitions of  $2\mathbb{Z}_{2n}+1$. Furthermore, from $\rho^{R_3}\cup \rho^{T_3}\tau=\mathcal{N}_3=N(\rho^n)=\rho^{n+R}\cup \rho^{n+T}\tau$ we can deduce that $R_3=n+R$ and $T_3=n+T$. Therefore, $|R|=|R_3|=|T|=|T_3|=\frac{n}{2}$, and $R\cap (n+R)=T\cap (n+T)=\emptyset$. For any $i\in R_2=2\mathbb{Z}_{2n}\setminus\{0,n\}$, since $\rho^i\in \mathcal{N}_2$ and $X$ is a distance-regular graph with intersection array $\{n,n-1,n/2,1; 1, n/2, n-1,n\}$, we have 
$|N(1)\cap N(\rho^i)|=|(\rho^R\cup \rho^T\tau)\cap (\rho^{i+R}\cup \rho^{i+T}\tau)|=|R\cap (i+R)|+|T\cap (i+T)|=\mu=n/2$, as required.

Conversely, assume that $R=-R$ and $T=(n-1)T=n-T$ are subsets of $2\mathbb{Z}_{2n}+1$ such that $R\cap (n+R)=T\cap (n+T)=\emptyset$, $|R|=|T|=n/2$, and $|R\cap (i+R)|+|T\cap (i+T)|=n/2$ for all $i\in 2\mathbb{Z}_{2n}\setminus\{0,n\}$. By using the same method as in  Case 4 of the proof of Lemma \ref{lem::3.8}, we can deduce that $X=\PSDG$ is an antipodal bipartite distance-regular graph with intersection array $\{n,n-1,n/2,1; 1, n/2, n-1,n\}$.

This completes the proof.
\end{proof}

\begin{remark}
\rm For $n=8$, by using computer search, we find that $R=\{5,7,9,11\}$ and $T=\{3,5,9,15\}$
are exactly a pair of sets satisfying the sufficient condition in Lemma \ref{lem::3.9}. This implies the existence of an antipodal bipartite distance-regular pseudo-semi-dihedrant of order $32$ with diameter $4$. However, for $n\in\{16, 32,64\}$, there are no such pairs.
\end{remark}

\begin{remark}
\rm According to Lemma \ref{lem::2.11}, it is easy to see that the sufficient condition in Lemma \ref{lem::3.9} is exactly equivalent to the condition that 
$\rho^{-1}(\rho^R \cup \rho^{T}\tau)=\rho^{-1+R} \cup \rho^{-1+T}\tau$ is a symmetric $(n,2,n,n/2)$-relative difference set relative to $\langle\rho^n\rangle$ in the group $\langle\rho^2,\tau\rangle$. Note that $\langle\rho^2,\tau\rangle=\langle\rho^2\rangle\times\langle\tau\rangle \cong 2\mathbb{Z}_{2n}\oplus\mathbb{Z}_2$. Thus the condition is also equivalent to the condition that $(-1+R,0)\cup(-1+T,1)$ is a symmetric $(n,2,n,n/2)$-relative difference set relative to $\langle (n,0) \rangle $ in $2\mathbb{Z}_{2n}\oplus\mathbb{Z}_2$. 
\end{remark}

Now we are in a position to give the proof of Theorem \ref{thm::main1} and Theorem \ref{thm::main2}.

\renewcommand\proofname{\bf{Proof of Theorem \ref{thm::main1}}}
\begin{proof}
First of all, we note that the graphs in  (i)--(iii) are trivial distance-regular semi-dihedrants because $K_{4n}\cong \Cay(\SD,\SD\setminus\{1\})$,   $K_{s\times t}\cong \Cay(\SD, \SD\setminus H)$ with $H$ being a subgroup of $\SD$ of order $t$ where $t\mid 4n$, and  $K_{2n,2n}-2nK_2\cong \Cay(\SD,\rho^{\mathbb{Z}_{2n}\setminus\{0\}}\tau)$. Furthermore, by Lemma \ref{lem::2.10}, the graphs in (iv)--(vi) are non-antipodal bipartite non-trivial distance-regular semi-dihedrants with diameter $3$. Also, by Lemma \ref{lem::3.8},  the graph in (vii) is a $2$-fold antipodal bipartite non-trivial distance-regular semi-dihedrant with diameter $4$ (i.e., Hadamard graph).

Now suppose that  $X=\SDG$ is a distance-regular semi-dihedrant of diameter $d$. Clearly, $X$ cannot be a cycle because $n>4$. If $X$ is trivial, then $X$ would be one of the graphs listed in (i)--(iii). If $X$ is non-trivial, then $k\geq 3$, and Corollary \ref{cor::2.1} indicates that  $X$ is imprimitive, and it suffices to consider the following three cases.

{\flushleft \bf Case A.} $X$ is antipodal but not bipartite.

By Lemma \ref{lem::2.1} and Corollary \ref{cor::3.1}, the antipodal quotient $\overline{X}$ of $X$ is a primitive distance-regular circulant or dihedrant. Then it follows from Theorem \ref{thm::cir} and Theorem  \ref{thm::dih}  that $\overline{X}$ is a complete graph, a cycle of prime order, or a Paley graph of prime order.  If $\overline{X}$ is a cycle of prime order, then $X$ would be a cycle, which is impossible because $X$ is non-trivial. If $\overline{X}$ is a Paley graph of prime order, by  Lemma \ref{lem::2.4}, we also deduce a contradiction. Thus  $\overline{X}$ is a complete graph, and so $d=2$ or $3$ according to Lemma \ref{lem::2.1}. By Lemma \ref{lem::3.7}, $d\neq 3$, whence $d=2$. However, complete multipartite graphs are the only antipodal distance-regular graphs with diameter $2$. Therefore, there are no non-trivial distance-regular semi-dihedrants which are antipodal but not bipartite.

{\flushleft \bf Case B.} $X$ is antipodal and bipartite.

If $d$ is odd, by Lemma \ref{lem::2.1}, $\overline{X}$ is primitive. Also, by Corollary \ref{cor::3.1},   $\overline{X}$ is a distance-regular circulant or dihedrant. 
As in Case A, we assert that $\overline{X}$ is a complete graph. Hence, $d=3$. Considering that $X$ is antipodal and bipartite, we obtain  $X\cong K_{2m,2m}-2mK_2$, which is impossible because $X$ is non-trivial.
Now suppose that $d$ is even. Then Corollary \ref{cor::3.1} and Lemma \ref{lem::2.1} indicate that $\frac{1}{2}X$ is an antipodal non-bipartite distance-regular circulant, dihedrant, or dicirculant with diameter $d_{\frac{1}{2}X}=d/2$.  Clear, $d\neq 2$. If $d=4$, by Lemma \ref{lem::3.8}, we conclude that 
$R=-R\subseteq 2\mathbb{Z}_{2n}+1$ and $T\subseteq 2\mathbb{Z}_{2n}$ are subsets of size $n/2$ such that  $R\cap (n+R)=T\cap (n+T)=\emptyset$ and $|R\cap (i+R)|+|T\cap (i+T)|=n/2$ for all $i\in 2\mathbb{Z}_{2n}\setminus\{0,n\}$.
 If $d\geq 6$, then   $d_{\frac{1}{2}X}\geq 3$. However, this is  impossible by Theorem \ref{thm::cir}, Theorem \ref{thm::dih} and Lemma \ref{lem::2.5}.

{\flushleft \bf Case C.} $X$ is bipartite but not antipodal.

By Lemma \ref{lem::2.1} and Corollary \ref{cor::3.1}, $\frac{1}{2}X$ is a primitive distance-regular circulant, dihedrant or dicirculant. Then it follows from Theorem \ref{thm::cir}, Theorem  \ref{thm::dih} and Lemma \ref{lem::2.5} that $\frac{1}{2}X$ is a complete graph, a cycle of prime order, or a Paley graph of prime order. We claim that the later two cases cannot occur, since  $\frac{1}{2}X$ has $2n$ vertices. Thus  $\frac{1}{2}X\cong K_{2n}$, and $d=2$ or $3$. If $d=2$, then $X$ is a complete bipartite graph, which is impossible because $X$ is non-trivial. Hence, $X$ is a   non-antipodal bipartite non-trivial distance-regular graph with diameter $3$. Recall that $X=\SDG=\Cay(\SD,\alpha^R\cup \alpha^T\beta)$ where  $R=-R$ and $T=(n+1)T$. Let $H$ be the bipartition set of $X$ containing the identity $1\in \SD$. Note that $H=\mathcal{N}_0\cup \mathcal{N}_2$. By Lemma \ref{lem::2.9}, $H$ is a subgroup of $\SD$ with index $2$.  Observe that  $\SD$ has exactly three  subgroups of index $2$, namely $\langle\rho\rangle$,  $\langle\rho^2,\tau\rangle$ and $\langle\rho^2,\rho\tau\rangle$. Thus we only need to consider the following three situations.
{\flushleft \bf Subcase C.1.} $H=\langle\rho\rangle$.

In this situation,  $R=\emptyset$. By Lemma \ref{lem::2.10},  $D=\tau\rho^T\tau=\rho^{(n-1)T}=\rho^{-T}$ is a non-trivial $(2n,k,\mu)$-difference set in the group $\langle\rho\rangle$, which clearly satisfies the condition  $D^{(-1)}=(\rho^{-T})^{(-1)}=\rho^{T}=\tau\rho^{-T}\tau=\tau D\tau$. Then $\rho^{T}$ is also a difference set in the group $\langle\rho\rangle$. As  $\langle\rho\rangle\cong \mathbb{Z}_{2n}$, we assert that $T$ is a non-trivial $(2n,k,\mu)$-difference set in the group $\mathbb{Z}_{2n}$.

{\flushleft \bf Subcase C.2.} $H=\langle\rho^2,\tau\rangle$.

In this situation, $R$ and $T$ are non-empty subsets of $2\mathbb{Z}_{2n}+1$.  By Lemma \ref{lem::2.10}, $D=\rho^{-1}(\rho^R\cup \rho^T\tau)=\rho^{-1+R}\cup\rho^{-1+T}\tau$ is a non-trivial $(2n,k,\mu)$-difference set in the group $\langle\rho^2,\tau\rangle$, which clearly satisfies the condition $D^{(-1)}=(\rho^{-1+R}\cup\rho^{-1+T}\tau)^{(-1)}=\rho^{1+R}\cup\tau \rho^{1-T}=\rho(\rho^{-1+R}\cup\rho^{-1+T}\tau)\rho=\rho D\rho$ because $T=(n+1)T=n+T$.  

{\flushleft \bf Subcase C.3.} $H=\langle\rho^2,\rho\tau\rangle$.

In this situation, $R$ and $T$ are non-empty subsets of $2\mathbb{Z}_{2n}+1$ and $2\mathbb{Z}_{2n}$, respectively.
By Lemma \ref{lem::2.10}, $D=\rho^{-1}(\rho^R\cup \rho^T\tau)=\rho^{-1+R}\cup\rho^{-1+T}\tau$ is a non-trivial $(2n,k,\mu)$-difference set in the group $\langle\rho^2,\rho\tau\rangle$, which clearly satisfies the condition $D^{(-1)}=(\rho^{-1+R}\cup\rho^{-1+T}\tau)^{(-1)}=\rho^{1+R}\cup\tau \rho^{1-T}=\rho(\rho^{-1+R}\cup\rho^{-1+T}\tau)\rho=\rho D\rho$ because $T=(n+1)T$.

This completes the proof.
\end{proof}

\renewcommand\proofname{\bf{Proof of Theorem \ref{thm::main2}}}
\begin{proof}

First of all, it is easy to see that the graphs in  (i)--(iii) are trivial distance-regular pseudo-semi-dihedrants.
Furthermore, by Lemma \ref{lem::2.10}, it is routine to verify that the graphs in (iv)--(vi) are non-antipodal bipartite non-trivial distance-regular pseudo-semi-dihedrants with diameter $3$. Also, by Lemma \ref{lem::3.9},  the graph in (vii) is a $2$-fold antipodal bipartite non-trivial distance-regular pseudo-semi-dihedrant with diameter $4$ (i.e., Hadamard graph).

Conversely, suppose that $X=\PSDG$ is a distance-regular pseudo-semi-dihedrant of diameter $d$. Clearly, $X$ cannot be a cycle  because $n>4$. If $X$ is trivial, then $X$ would be one of the graphs listed in (i)--(iii). If $X$ is non-trivial, then $k\geq 3$, and  Corollary \ref{cor::2.1} implies that  $X$ is imprimitive, and it suffices to consider the following three cases.

{\flushleft \bf Case A.} $X$ is antipodal but not bipartite.

By Lemma \ref{lem::2.1} and Corollary \ref{cor::3.2}, the antipodal quotient $\overline{X}$ of $X$ is a primitive distance-regular circulant or Cayley graph over $\mathbb{Z}_t\oplus\mathbb{Z}_2$ for some $t\mid n$. Then it follows from Theorem \ref{thm::cir} and Lemma \ref{lem::2.6} that $\overline{X}$ is a complete graph, a cycle of prime order, or a Paley graph of prime order.  If $\overline{X}$ is a cycle of prime order, then $X$ would be a cycle, which is impossible because $X$ is non-trivial. If $\overline{X}$ is a Paley graph of prime order, by Lemma \ref{lem::2.4}, we also can deduce a contradiction. Thus  $\overline{X}$ is a complete graph, and so $d=2$ or $3$ according to Lemma \ref{lem::2.1}. By Lemma \ref{lem::3.7}, $d\neq 3$, whence $d=2$. However, complete multipartite graphs are the only antipodal distance-regular graphs with diameter $2$. Therefore, there are no non-trivial distance-regular pseudo-semi-dihedrants which are antipodal but not bipartite.

{\flushleft \bf Case B.} $X$ is antipodal and bipartite.

If $d$ is odd, by Lemma \ref{lem::2.1}, $\overline{X}$ is primitive. Also, by  Corollary \ref{cor::3.2},  $\overline{X}$ is a distance-regular circulant or Cayley graph over $\mathbb{Z}_t\oplus\mathbb{Z}_2$ for some $t\mid n$. 
As in Case A, we assert that $\overline{X}$ is a complete graph. Hence, $d=3$. Considering that $X$ is antipodal and bipartite, we obtain  $X\cong K_{2n,2n}-2nK_2$, which is impossible because $X$ is non-trivial.
Now suppose that $d$ is even. Then Corollary \ref{cor::3.2} and Lemma \ref{lem::2.1} indicate that $\frac{1}{2}X$ is an antipodal non-bipartite distance-regular circulant or Cayley graph over $2\mathbb{Z}_{2n}\oplus\mathbb{Z}_2\cong \mathbb{Z}_n\oplus\mathbb{Z}_2$ with diameter $d_{\frac{1}{2}X}=d/2$.  Clearly, $d\neq 2$. If $d=4$, by Lemma \ref{lem::3.9}, we conclude that 
$R=-R$ and $T=(n-1)T=n-T$ are subsets of $2\mathbb{Z}_{2n}+1$ of size $n/2$ such that  $R\cap (n+R)=T\cap (n+T)=\emptyset$ and $|R\cap (i+R)|+|T\cap (i+T)|=n/2$ for all $i\in 2\mathbb{Z}_{2n}\setminus\{0,n\}$. If $d\geq 6$, then  $d_{\frac{1}{2}X}\geq 3$.  However, this is impossible by Theorem \ref{thm::cir} and Lemma \ref{lem::2.6}.

{\flushleft \bf Case C.} $X$ is bipartite but not antipodal.

By Lemma \ref{lem::2.1} and Corollary \ref{cor::3.2}, $\frac{1}{2}X$ is a primitive distance-regular circulant or Cayley graph over $2\mathbb{Z}_{2n}\oplus\mathbb{Z}_2\cong \mathbb{Z}_n\oplus\mathbb{Z}_2$. As in Case A,  $\frac{1}{2}X$ is a complete graph, a cycle of prime order, or a Paley graph of prime order. We claim that the latter two cases cannot occur, since  $\frac{1}{2}X$ has $2n$ vertices. Thus  $\frac{1}{2}X\cong K_{2n}$, and $d=2$ or $3$. If $d=2$, then $X$ is a complete bipartite graph, which is impossible because $X$ is non-trivial. Hence, $X$ is a   non-antipodal bipartite non-trivial distance-regular graph with diameter $3$. Recall that $X=\PSDG=\Cay(\PSD,\rho^R\cup \rho^T\tau)$ where  $R=-R$ and $T=(n-1)T$. Let $H$ be the bipartition set of $X$ containing the identity $1\in \PSD$. Note that $H=\mathcal{N}_0\cup \mathcal{N}_2$. By Lemma \ref{lem::2.9}, $H$ is a subgroup of $\PSD$ with index $2$.  Observe that  $\PSD$ has exactly three  subgroups of index $2$, namely $\langle\rho\rangle$,  $\langle\rho^2,\tau\rangle=\langle\rho^2\rangle\times \langle\tau\rangle$ and $\langle\rho^2,\rho\tau\rangle=\langle\rho\tau\rangle$. Thus we only need to consider the following three situations.

{\flushleft \bf Subcase C.1.} $H=\langle\rho\rangle$.

In this situation,  $R=\emptyset$. By Lemma \ref{lem::2.10},  $D=\tau\rho^T\tau=\rho^{-T}$ is a non-trivial $(2n,k,\mu)$-difference set in the group $\langle\rho\rangle$, which clearly satisfies the condition  $D^{(-1)}=(\rho^{-T})^{(-1)}=\rho^{T}=\tau\rho^{-T}\tau=\tau D\tau$ because $T=(n-1)T$. Then $\rho^{T}$ is also a difference set in the group $\langle\rho\rangle$. As  $\langle\rho\rangle\cong \mathbb{Z}_{2n}$, we assert that $T$ is a non-trivial $(2n,k,\mu)$-difference set in the group $\mathbb{Z}_{2n}$.

{\flushleft \bf Subcase C.2.} $H=\langle\rho^2,\tau\rangle=\langle\rho^2\rangle\times \langle\tau\rangle$.

In this situation, $R$ and $T$ are non-empty subsets of $2\mathbb{Z}_{2n}+1$. By Lemma \ref{lem::2.10}, $D=\rho^{-1}(\rho^R\cup \rho^T\tau)=\rho^{-1+R}\cup\rho^{-1+T}\tau$ is a non-trivial $(2n,k,\mu)$-difference set in the group $\langle\rho^2,\tau\rangle =\langle\rho^2\rangle\times \langle\tau\rangle$, which clearly satisfies the condition $D^{(-1)}=(\rho^{-1+R}\cup\rho^{-1+T}\tau)^{(-1)}=\rho^{1+R}\cup\tau \rho^{1-T}=\rho(\rho^{-1+R}\cup\rho^{-1+T}\tau)\rho=\rho D\rho$ because $T=(n-1)T=n-T$. As $\langle\rho^2,\tau\rangle=\langle\rho^2\rangle\times \langle\tau\rangle\cong 2\mathbb{Z}_{2n}\oplus\mathbb{Z}_2$, we assert that $(-1+R,0)\cup(-1+T,1)$ is a non-trivial $(2n,k,\mu)$-difference set in the group $2\mathbb{Z}_{2n}\oplus\mathbb{Z}_2$.

{\flushleft \bf Subcase C.3.} $H=\langle\rho^2,\rho\tau\rangle=\langle\rho\tau\rangle$.

In this situation, $R$ and $T$ are non-empty subsets of $2\mathbb{Z}_{2n}+1$ and  $2\mathbb{Z}_{2n}$, respectively. By Lemma \ref{lem::2.10}, $D=\rho^{-1}(\rho^R\cup \rho^T\tau)=\rho^{-1+R}\cup\rho^{-1+T}\tau$ is a non-trivial $(2n,k,\mu)$-difference set in the group $\langle\rho^2,\rho\tau\rangle=\langle\rho\tau\rangle$, which clearly satisfies the condition $D^{(-1)}=(\rho^{-1+R}\cup\rho^{-1+T}\tau)^{(-1)}=\rho^{1+R}\cup\tau \rho^{1-T}=\rho(\rho^{-1+R}\cup\rho^{-1+T}\tau)\rho=\rho D\rho$ because $T=(n-1)T=-T$.

This completes the proof.
\end{proof}

\section{Further research}
In this paper, we provide a partial characterization for distance-regular Cayley graphs over semi-dihedral and pseudo-semi-dihedral groups. These  groups are two special classes of $p$-groups with a cyclic subgroup of index $p$. According to elementary group theory, it is known that every group of order $p^s$ with a cyclic subgroup of index $p$ must be one of the following groups:
\begin{enumerate}[(i)]
\setlength{\itemsep}{0pt}
\item the cyclic group $\langle \rho\mid \rho^{p^s}=1\rangle$, where $s\geq 1$;
\item the abelian group $\langle \rho,\tau\mid \rho^{p^{s-1}}=\tau^p=1,\rho\tau=\tau\rho\rangle$, where $s\geq 2$; 
\item the metacyclic group $\langle \rho,\tau\mid \rho^{p^{s-1}}=\tau^p=1,\tau^{-1}\rho\tau=\rho^{p^{s-2}+1}\rangle$, where $p>2$ and $s\geq 3$;
\item the dihedral group $\langle \rho,\tau\mid \rho^{2^{s-1}}=\tau^2=1,\tau\rho\tau=\rho^{-1}\rangle$, where $s\geq 3$;
\item the dicyclic group $\langle\rho,\tau\mid \rho^{2^{s-1}}=1,\tau^2=\rho^{2^{s-2}},\tau^{-1}\rho\tau=\rho^{-1}\rangle$, where $s\geq 3$;
\item the semi-dihedral group $\langle\rho,\tau\mid\rho^{2^{s-1}}=\tau^2=1,\tau\rho\tau=\rho^{2^{s-2}-1} \rangle$, where $s\geq 4$;
\item the pseudo-semi-dihedral group $\langle\rho,\tau\mid\rho^{2^{s-1}}=\tau^2=1,\tau\rho\tau=\rho^{2^{s-2}+1} \rangle$, where $s\geq 4$.
\end{enumerate}

Up to now, the problem of characterizing  distance-regular Cayley graphs over the groups listed in  (i), (ii), (iv)--(vii) has been considered in \cite{HD22,MP03,MP07,ZLH23} and in the current paper. Thus we propose the following problem for further research.

\begin{problem}
Determine all distance-regular Cayley graphs over the metacyclic group $\langle \rho,\tau\mid \rho^{p^{s-1}}=\tau^p=1,\tau^{-1}\rho\tau=\rho^{p^{s-2}+1}\rangle$, where $p>2$ and $s\geq 3$.
\end{problem}

\section*{Acknowledgments}
X. Huang is supported by National Natural Science Foundation of China (Grant No. 11901540). L. Lu is supported by National Natural Science Foundation of China (Grant Nos. 12371362, 12001544) and  Natural Science Foundation of Hunan Province  (Grant No. 2021JJ40707).

\end{document}